\providecommand{\U}[1]{\protect \rule{.1in}{.1in}}
\newtheorem{theorem}{Theorem}[section]
\newtheorem{corollary}[theorem]{Corollary}
\newtheorem{lemma}[theorem]{Lemma}
\theoremstyle{remark}
\newtheorem{remark}[theorem]{Remark}
\numberwithin{equation}{section}
\begin{document}
\title[the classical Eisenstein series of weight two]{Critical points of the classical Eisenstein series of weight two}
\author{Zhijie Chen}
\address{Department of Mathematical Sciences, Yau Mathematical Sciences Center,
Tsinghua University, Beijing, 100084, China }
\email{zjchen2016@tsinghua.edu.cn}
\author{Chang-Shou Lin}
\address{Taida Institute for Mathematical Sciences (TIMS), Center for Advanced Study in
Theoretical Sciences (CASTS), National Taiwan University, Taipei 10617, Taiwan }
\email{cslin@math.ntu.edu.tw}

\begin{abstract}
In this paper, we completely determine the critical points of the normalized Eisenstein series $E_2(\tau)$ of weight $2$. Although $E_2(\tau)$ is not a modular form, our result shows that $E_2(\tau)$ has at most one critical point in every fundamental domain of $\Gamma_{0}(2)$. We also give a criteria for a fundamental domain containing a critical point of $E_2(\tau)$.  Furthermore, under the M\"{o}bius transformation of $\Gamma_{0}(2)$ action, all critical points can be mapped into the basic fundamental domain $F_0$ and their images are contained densely on three smooth curves. A geometric interpretation of these smooth curves is also given. It turns out that these smooth curves coincide with the degeneracy curves of trivial critical points of a multiple Green function related to flat tori.
\end{abstract}
\maketitle
\tableofcontents

\section{Introduction}

The Jacobi theta functions, the Eisenstein series and the Weierstrass functions arise in numerous theories and applications of both mathematics and physics. Since their discovery in the early 19th century, the mathematical foundation of elliptic functions was subsequently developed. It turns out that, besides their applications in science, these special functions in the elliptic function theory are rather deep objects by themselves.

The main goal of this paper is to completely locate all the critical points of the classical function $\eta_1(\tau)$ or equivalently the normalized Eisenstein series $E_2(\tau)$ of weight $2$.
Throughout the paper, we use the notations $\mathbb{R}^{+}=(0,+\infty)$,  $\omega_{1}=1$,
$\omega_{2}=\tau$, $\omega_{3}=1+\tau$ and $\Lambda_{\tau}=\mathbb{Z+Z}\tau$,
where $\tau \in \mathbb{H}=\{ \tau|\operatorname{Im}\tau>0\}$.
Let $\wp(z)=\wp(z|\tau)$ be the Weierstrass  $\wp$-function with periods $\Lambda_{\tau}$, defined by
\[
\wp(z|\tau):=\frac{1}{z^{2}}+\sum_{\omega \in \Lambda_{\tau}\backslash
\{0\}}\left(  \frac{1}{(z-\omega)^{2}}-\frac{1}{\omega^{2}}\right).
\]
Let $\zeta(z)=\zeta(z|\tau):=-\int^{z}\wp(\xi|\tau)d\xi$ be the Weierstrass zeta
function, which is odd and has two quasi-periods $\eta_{k}(\tau):=2\zeta(\frac{\omega_k}{2}|\tau)$, $k=1,2$:
\begin{equation}
\eta_{1}(\tau)=\zeta(z+1|\tau)-\zeta(z|\tau),\text{ \ }\eta_{2}(\tau
)=\zeta(z+\tau|\tau)-\zeta(z|\tau). \label{quasi}%
\end{equation}
The well-known Legendre relation gives $\eta_2(\tau)=\tau\eta_{1}(\tau)-2\pi i$. In the literature, $\eta_1(\tau)$ is known as the Weierstrass eta function (cf. \cite{YB}), which is just a multiple of the normalized Eisenstein series $E_2(\tau)$ of weight $2$:
\begin{align}\label{eisenstein}\frac{3}{\pi^2}\eta_1(\tau)=E_2(\tau)
:=&\frac{3}{\pi^2}\sum_{m=-\infty}^{\infty}\mathop{{\sum}'}_{n=-\infty}^{\infty}\frac{1}{(m\tau+n)^2}\\
=&1-24\sum_{n=1}^{\infty}b_{n} e^{2n\pi i \tau},\quad b_n=\sum_{1\leq d|n}d.\nonumber\end{align}
Conventionally, $\mathop{{\sum}'}$ means to sum over $(n,m)\in \mathbb{Z}^2\setminus\{(0,0)\}$. Besides, $\eta_1(\tau)$ is also connected with Dedekind eta function
\[\eta(\tau):=e^{\frac{\pi i\tau}{12}}\prod_{n=1}^\infty (1-e^{2n\pi i\tau})\]
through the following logarithmic differential formula (cf. \cite[p.696]{YB}):
\[\frac{1}{\eta(\tau)}\eta'(\tau)=\frac{i}{4\pi}\eta_{1}(\tau).\]

Unlike the other Eisenstein series of weight $2k$ with $k\geq 2$, $E_2(\tau)$ is not a modular form. Its transformation under the action of $SL(2,\mathbb{Z})$ satisfies
\begin{equation}\label{sl-eta}
E_{2}\left(\frac{a\tau+b}{c\tau+d}\right)=(c\tau+d)^{2}E_2(\tau)-\frac{6 ic}{\pi}(c\tau+d),\; \begin{pmatrix}
a & b\\
c & d
\end{pmatrix}
\in SL(2,\mathbb{Z}).
\end{equation}
Thus it is surprising that its critical points possess the following property.
\begin{theorem}\label{eta-thm0}
Let $F$ be a fundamental domain of $\Gamma_{0}(2)$. Then $E_{2}(\tau)$ has at most one critical point in $F$.
\end{theorem}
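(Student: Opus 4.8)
The plan is to view the critical points of $E_2$ as the zeros of the \emph{holomorphic} function $E_2'(\tau)$ and to bound their number in $F$ by the argument principle. By the classical Ramanujan identity $E_2'=\frac{\pi i}{6}(E_2^2-E_4)$, where $E_4$ is the weight-$4$ Eisenstein series, a point $\tau_0$ is critical exactly when $E_2(\tau_0)^2=E_4(\tau_0)$; in the Weierstrass normalization of \eqref{eisenstein} this reads $\eta_1(\tau_0)^2=\frac{1}{12}g_2(\tau_0)$, since $\eta_1'=\frac{i}{6\pi}\bigl(3\eta_1^2-\tfrac14 g_2\bigr)$. In particular the critical set is discrete, consistent with the ``three curves'' of the abstract being the accumulation locus of its $\Gamma_0(2)$-images rather than the critical points themselves. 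Because $T=\left(\begin{smallmatrix}1&1\\0&1\end{smallmatrix}\right)\in\Gamma_0(2)$ and $E_2'(\tau+1)=E_2'(\tau)$, the critical set is invariant under $\tau\mapsto\tau+1$.

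Next I would fix the standard fundamental domain $F$ of $\Gamma_0(2)$, bounded by the two vertical sides $\operatorname{Re}\tau=\pm\frac12$ (identified by $T$) together with arcs accumulating at the cusp $0$ (identified by some $\gamma_0\in\Gamma_0(2)$ with $c=2$). Truncating $F$ by a high horocycle $\operatorname{Im}\tau=Y$ and a small horocycle at the cusp $0$ produces a compact region $R$, and the number of critical points in $R$ equals $\frac{1}{2\pi}\oint_{\partial R}d\arg E_2'$. The two vertical sides contribute oppositely and cancel, precisely because $E_2'(\tau+1)=E_2'(\tau)$. On the top segment the expansion $E_2'(\tau)=-48\pi i\,q\,(1+O(q))$, $q=e^{2\pi i\tau}$, shows that $\arg E_2'$ changes by $-2\pi$ (for the positive orientation of $\partial R$), contributing $-1$. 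Thus, after letting $Y\to\infty$, everything is reduced to the contribution $B$ of the arcs near the cusp $0$ together with the horocycle there, and the theorem becomes the sharp statement that the interior count $N=B-1$ satisfies $N\le1$.

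The heart of the matter, and the step I expect to be the main obstacle, is this evaluation of $B$, where the non-modularity of $E_2$ is unavoidable. Differentiating \eqref{sl-eta} gives
\[
E_2'(\gamma\tau)=(c\tau+d)^2\left[(c\tau+d)^2E_2'(\tau)+2c(c\tau+d)E_2(\tau)-\frac{6ic^2}{\pi}\right],
\]
so on the two $\gamma_0$-identified arcs the values of $E_2'$ are related not by a nonvanishing automorphy factor but by the anomalous affine-in-$E_2$ bracket, whose winding must be computed; one must also control $E_2'$ at the cusp $0$, where \eqref{sl-eta} with $\gamma=S=\left(\begin{smallmatrix}0&-1\\1&0\end{smallmatrix}\right)$ yields $E_2(\tau)\sim\tau^{-2}+\frac{6i}{\pi\tau}$ and hence $E_2'(\tau)\sim-2\tau^{-3}$. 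A guiding principle throughout is the valence heuristic: were $E_2'$ a genuine weight-$4$ form for $\Gamma_0(2)$, its total zero order in $F$ would be $\frac{4}{12}[\overline{\Gamma}:\overline{\Gamma_0(2)}]=1$, already matching the desired bound; the affine anomaly merely redistributes this single unit among the cusp $\infty$ (the factor $q$ above), the order-$2$ elliptic point of $\Gamma_0(2)$ at $\frac{1+i}{2}$, and at most one genuine interior zero. Making this redistribution rigorous --- i.e.\ showing that the bracket together with the cusp-$0$ behaviour cannot inject a second interior zero --- is exactly what forces $N\le1$, and tracking \emph{which} of the three locations absorbs the unit is what should underlie the companion criterion for when a fundamental domain actually contains a critical point.
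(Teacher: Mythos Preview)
Your proposal has a genuine gap: you correctly identify that the entire weight of the argument rests on evaluating the contribution $B$ from the arcs near the cusp $0$, where the non-modular transformation law of $E_2'$ intervenes, but you do not carry this out. Saying that ``showing that the bracket together with the cusp-$0$ behaviour cannot inject a second interior zero is exactly what forces $N\le 1$'' is a restatement of the theorem, not a proof of it. The winding of the affine bracket $(c\tau+d)^2E_2'(\tau)+2c(c\tau+d)E_2(\tau)-\tfrac{6ic^2}{\pi}$ along the identified arcs depends on the values of $E_2$ there, and controlling this uniformly over all $\Gamma_0(2)$-translates of $F_0$ (which is what ``any fundamental domain'' requires) is precisely the difficulty that the quasi-modularity creates. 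Your valence heuristic is suggestive but not a substitute for this computation.

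The paper's route is entirely different and avoids applying the argument principle to $E_2'$ on a $\Gamma_0(2)$-domain altogether. For $\gamma=\left(\begin{smallmatrix}a&b\\c&d\end{smallmatrix}\right)\in\Gamma_0(2)$ with $c\neq 0$, one computes directly that
\[
12\eta_1(\gamma\cdot\tau)^2-g_2(\gamma\cdot\tau)=c^2(c\tau+d)^2\,f_{-d/c}(\tau),\qquad f_C(\tau):=12(C\eta_1-\eta_2)^2-g_2\,(C-\tau)^2,
\]
so critical points of $E_2$ in $\gamma(F_0)$ biject with zeros of $f_{-d/c}$ in $F_0$; for $c=0$ one shows $\eta_1'\neq 0$ on $F_0$ separately. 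The problem is thus reduced to: \emph{$f_C$ has at most one zero in $F_0$ for each real $C$}. This is proved by realising $f_C$ as the locally uniform limit $\lim_{s\to 0}\tfrac{4(\tau-C)}{s}Z^{(2)}_{-Cs,s}(\tau)$ of rescaled pre-modular forms, and showing that each $Z^{(2)}_{r,s}$ has at most one zero in $F_0$ (Theorem~\ref{thm2}). That last statement \emph{does} use a valence formula, but for the genuine $SL(2,\mathbb{Z})$-modular form $M_N(\tau)=\prod_{(r,s)\in Q_N}Z^{(2)}_{r,s}(\tau)$, where no anomaly appears; the crucial non-vanishing of $Z^{(2)}_{r,s}$ on $\partial F_0$ (Lemma~\ref{neq-zero}) is supplied by a PDE result on mean field equations (Theorem~B). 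In short, the paper trades the intractable quasi-modular winding computation you propose for a detour through a family of honest modular forms where the counting is clean, at the cost of importing an external analytic input.
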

Here $\Gamma_{0}(2)$ is the congruence subgroup of $SL(2,\mathbb{Z})$ defined by
\[
\Gamma_{0}(2):=\left \{  \left.
\begin{pmatrix}
a & b\\
c & d
\end{pmatrix}
\in SL(2,\mathbb{Z})\right \vert c\equiv0\text{ }\operatorname{mod}2\right \}.
\]
Recently, there are some works studying the zeros of $E_{2}(\tau)$;  see \cite{EIBS,WY} and references therein. As far as we know, there seems no results concerning the critical points of $E_{2}(\tau)$ in the literature.

In view of Theorem \ref{eta-thm0}, a natural question is: \emph{What are those fundamental domains containing critical points}? To answer this question, we introduce our "basic" fundamental domain $F_{0}$ of $\Gamma
_{0}(2)$:
\[
F_{0}:=\{ \tau \in \mathbb{H}\ |\ 0\leqslant \  \text{Re}\  \tau \leqslant
1\  \text{and}\ |z-\tfrac{1}{2}|\geqslant \tfrac{1}{2}\}.
\]
Note
that given $\gamma=%
\begin{pmatrix}
a & b\\
c & d
\end{pmatrix}
\in \Gamma_{0}(2)/\{ \pm I_{2}\}$ (i.e. {\it consider $\gamma$ and $-\gamma$ to be
the same}),%
\[
\gamma (F_{0}):=\left \{  \left.  \gamma \cdot \tau:=\tfrac{a\tau+b}{c\tau
+d}\right \vert \tau \in F_{0}\right \}  =(-\gamma)(F_{0})%
\]
is another fundamental domain of $\Gamma_{0}(2)$. Moreover, $\gamma ( F_{0})=F_{0}+m$ for some $m\in \mathbb{Z}$ if and only if $c=0$.

\begin{theorem}\label{F-one} Let $F=\gamma (F_{0})$ be a fundamental domain of $\Gamma_{0}(2)$ with $\gamma=%
\begin{pmatrix}
a & b\\
c & d
\end{pmatrix}
\in \Gamma_{0}(2)/\{ \pm I_{2}\}$. Then $F$ contains a critical point of $E_{2}(\tau)$ if and only if $c\neq 0$.
\end{theorem}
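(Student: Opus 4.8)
The plan is to read a critical point of $E_{2}$ as a zero of the holomorphic derivative $E_2'=\partial_\tau E_2$, and to reduce Theorem \ref{F-one} to two assertions: \textbf{(A)} $E_2$ has no critical point in the basic domain $F_0$; and \textbf{(B)} for every $\gamma$ with $c\neq0$, the cell $\gamma(F_0)$ contains a critical point. Assertion (A) yields the ``only if'' direction, since $E_2(\tau+1)=E_2(\tau)$ gives $E_2'(\tau+1)=E_2'(\tau)$, so no critical point can lie in any $F_0+m=\gamma(F_0)$ with $c=0$. Assertion (B) is the ``if'' direction; because Theorem \ref{eta-thm0} already provides at most one critical point per cell, (B) is purely an existence statement. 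Two tools will be used repeatedly: Ramanujan's identity $E_2'=\tfrac{\pi i}{6}(E_2^2-E_4)$, where $E_4$ is the weight-$4$ Eisenstein series, and the reality of the $q$-coefficients of $E_2$, which gives $\overline{E_2(\tau)}=E_2(-\bar\tau)$; together with periodicity this furnishes the reflection identity $E_2(1-\bar\tau)=\overline{E_2(\tau)}$ and hence $E_2'(1-\bar\tau)=-\overline{E_2'(\tau)}$.

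For (A) I would exploit that $F_0$ is invariant under the anti-holomorphic involution $\iota\colon\tau\mapsto 1-\bar\tau$, whose only fixed points in $F_0$ lie on the segment $\tau=\tfrac12+it$, $t\ge\tfrac12$. By the reflection identity, $\iota$ sends critical points to critical points; hence if $\tau_0\in F_0$ were a critical point with $\operatorname{Re}\tau_0\neq\tfrac12$, then $\iota(\tau_0)\in F_0$ would be a distinct second critical point, contradicting Theorem \ref{eta-thm0}. Therefore every critical point of $E_2$ in $F_0$ lies on that segment. There $q=-e^{-2\pi t}$ is real, so $E_2,E_4$ are real and $\tfrac{d}{dt}E_2(\tfrac12+it)=iE_2'=-\tfrac{\pi}{6}\bigl(E_2^2-E_4\bigr)$; it thus suffices to prove $E_2^2>E_4$ on the segment. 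From $E_2=1-24\sum_{n\ge1}b_n(-1)^ne^{-2\pi nt}$ and $E_4=1+240\sum_{n\ge1}\sigma_3(n)(-1)^ne^{-2\pi nt}$ (here $\sigma_3(n)=\sum_{d\mid n}d^3$ and $|q|\le e^{-\pi}$ on the segment) the $n=1$ terms dominate, giving $E_2>1$ and $E_4<1$ for finite $t$; since $E_2>1$ forces $E_2^2>1\ge E_4$, we obtain $E_2'\neq0$ and (A) follows.

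For (B) the reflection trick of (A) is unavailable: conjugating $\iota$ by $\gamma$ produces an involution of $\gamma(F_0)$, but $\gamma$ does not preserve the critical set (this is exactly the effect of the inhomogeneous term in \eqref{sl-eta}), so a critical point in $\gamma(F_0)$ need not lie on any fixed geodesic. Instead I would count critical points by the argument principle after transporting to $F_0$. Differentiating \eqref{sl-eta} gives
\[
G_\gamma(\tau):=\frac{d}{d\tau}\!\left[(c\tau+d)^2E_2(\tau)\right]-\frac{6ic^2}{\pi}=\frac{E_2'(\gamma\tau)}{(c\tau+d)^2},
\]
so $\gamma\tau$ is a critical point if and only if $G_\gamma(\tau)=0$, and the number of critical points in $\gamma(F_0)$ equals $\tfrac{1}{2\pi i}\oint_{\partial F_0}\tfrac{G_\gamma'}{G_\gamma}\,d\tau$. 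The three cusps $\infty$, $0$, $1$ of $F_0$ are the key: there $E_2$, and hence $G_\gamma$, blow up at explicit rates dictated by \eqref{sl-eta} (for instance $G_\gamma(\tau)\sim 2c^2\tau$ as $\tau\to i\infty$, while $E_2(\tau)\sim\tau^{-2}$ as $\tau\to0$), and these rates, together with control of $G_\gamma$ on the two finite boundary arcs, are what determine the winding number. The aim is to show this winding number equals exactly $1$ for every $(c,d)$ with $c\neq0$, which with Theorem \ref{eta-thm0} yields precisely one critical point in $\gamma(F_0)$.

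The crux is (B): showing that the winding number of $G_\gamma$ around $\partial F_0$ is identically $1$ for all $(c,d)$ with $c\neq0$. In contrast to (A), $G_\gamma$ is genuinely complex on the boundary, its argument rotates, and---because the side-pairing of $\gamma(F_0)$ is a nontrivial element of $\Gamma_0(2)$---there is no cancellation between boundary arcs to simplify the count. One must therefore combine precise cusp asymptotics from \eqref{sl-eta} at all three cusps with uniform estimates for $G_\gamma$ along the finite arcs, strong enough to fix the integer winding number independently of $(c,d)$. Since $\Gamma_0(2)$ is discrete, one cannot deform $(c,d)$ continuously to a single base case; the uniformity of these estimates over the infinite family of admissible $(c,d)$, rather than any individual computation, is where the main difficulty lies.
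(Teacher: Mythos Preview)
Your reduction to (A) and (B) is correct, and your proof of (A) is both valid and more elementary than the paper's route: the reflection $\tau\mapsto 1-\bar\tau$ combined with Theorem~\ref{eta-thm0} pins any hypothetical critical point in $F_0$ to the half-line $\operatorname{Re}\tau=\tfrac12$, and the $q$-expansion inequality $E_2^2>E_4$ there (routine to justify since $|q|\le e^{-\pi}$) finishes it. The paper obtains (A) far less directly, as a by-product of its machinery for (B).

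The genuine gap is (B). You have set up the right function $G_\gamma$ and the right contour, but you have not computed the winding number; you close by naming the uniformity over $(c,d)$ as ``where the main difficulty lies.'' That is an accurate diagnosis, and it means (B) is not proved. Cusp asymptotics alone do not determine an integer: one must control the argument of $G_\gamma$ along the full boundary of $F_0$, and your proposal offers no mechanism to do this uniformly in the discrete parameter $(c,d)$.

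The paper sidesteps this difficulty by a change of viewpoint. For each real $C$ it introduces
\[
f_C(\tau)=12\bigl(C\eta_1(\tau)-\eta_2(\tau)\bigr)^2-g_2(\tau)(C-\tau)^2,
\]
and the transformation law yields $12\eta_1(\gamma\cdot\tau)^2-g_2(\gamma\cdot\tau)=c^2(c\tau+d)^2 f_{-d/c}(\tau)$. Thus (B) for a given $\gamma$ becomes: $f_{-d/c}$ has a zero in $F_0$. This is Theorem~\ref{Unique-pole}, proved by realizing $f_C$ as the limit $\lim_{s\to0}\tfrac{4(\tau-C)}{s}Z^{(2)}_{-Cs,s}(\tau)$ of pre-modular forms and invoking the zero-count for $Z^{(2)}_{r,s}$ in Theorem~\ref{thm2}. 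Because $C=-d/c$ now sits inside a \emph{continuous} real parameter, the uniformity problem that blocks your winding-number approach evaporates: existence (and uniqueness) of the zero $\tau(C)$ is obtained for all $C\in\mathbb{R}\setminus\{0,1\}$ at once, and both (A) and (B) follow from the single identity above.
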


By Theorem \ref{F-one}, we can transform every critical point of $E_{2}(\tau)$ via the M\"{o}bius transformation of $\Gamma_{0}(2)$ action to locate it in $F_0$. Denote \emph{the collection of such corresponding points in $F_0$ by $\mathcal{C}$}, which consists of infinitely many points. A fundamental question is: {\it What is the geometry of the set $\mathcal{C}$}?

Surprisingly, it turns out that $\mathcal{C}$ will locate on
{\it three smooth curves} $\tau(C)$ in $F_{0}$, which are parameterized by $C\in \mathbb{R}\setminus\{0,1\}$ via the following identity
\begin{equation}\label{c=tau}
C=\tau-\frac{2\pi i}{\eta_{1}(\tau)\pm \sqrt{g_{2}(\tau)/12}},\quad \tau\in F_{0}.
\end{equation}
Here $g_2(\tau)=60G_4(\tau)$ is the well-known invariant coming from
\[\wp'(z|\tau)^2
=4\wp(z|\tau)^3-g_{2}(\tau)\wp(z|\tau)-g_{3}(\tau),\]
and $G_4(\tau)$ is the Eisenstein series of weight $4$.
We will prove in Section 2 that for each $C\in \mathbb{R}\setminus\{0,1\}$, there is {\it a unique} $\tau(C)\in F_0$ such that (\ref{c=tau}) holds.\footnote{Note that the RHS of (\ref{c=tau}) is actually a multi-valued function, please see Theorem \ref{Unique-pole} for the precise definition of this unique $\tau(C)$.} Consequently, the parametrization (\ref{c=tau}) will give three smooth curves
\[
\mathcal{C}_{0}:=\{ \tau(C)|C\in(0,1)\},
\]
\begin{align*}
\mathcal{C}_{-}:=\{ \tau(C)|C\in(-\infty,0)\},&\text{ \ }\mathcal{C}_{+}:=\{
\tau(C)|C\in(1,+\infty)\}.
\end{align*}
The relation between (\ref{c=tau}) and $\eta_1'(\tau)$ comes from the classical formula (see e.g. \cite{YB}, or from Ramanujan's formula: $E_2'(\tau)=\frac{\pi i}{6}(E_2^2-E_4)$)
\begin{equation}\label{d-eta11}
\eta_{1}^{\prime}(\tau)=\tfrac{i}{2\pi}\left(  \eta_{1}(\tau)^{2}-\tfrac{1}
{12}g_{2}(\tau)\right).
\end{equation}

\begin{theorem}\label{Location} Let $\tau(C)$ be defined by (\ref{c=tau}) for $C\in \mathbb{R}\backslash \{0,1\}$. Then
\begin{equation}\label{cf0}\mathcal{C}=\left \{  \left.  \tau(\tfrac{-d}{c})\right \vert
\begin{pmatrix}
a & b\\
c & d
\end{pmatrix}
\in \Gamma_{0}(2)/\{ \pm I_{2}\} \text{ with }c\not =0\right \}\subset \mathcal{C}_{-}\cup\mathcal{C}_{0}\cup\mathcal{C}_{+}.\end{equation}
Furthermore, the closure of $\mathcal{C}$ in $F_0$ is precisely the union of the three smooth curves:
\begin{equation}\label{cf1}\overline{\mathcal{C}}\cap F_0= \mathcal{C}_{-}\cup\mathcal{C}_{0}\cup\mathcal{C}_{+}.\end{equation}
\end{theorem}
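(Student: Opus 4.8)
The plan is to establish the exact correspondence (\ref{cf0}) by differentiating the quasi-modular transformation law, and then to upgrade it to the closure statement (\ref{cf1}) using density of the parameters $-d/c$ together with the boundary behaviour of the parametrization $\tau(C)$. First I would rewrite (\ref{sl-eta}) as the law $\eta_1(\gamma\tau)=(c\tau+d)^2\eta_1(\tau)-2\pi i c(c\tau+d)$ for $\eta_1=\tfrac{\pi^2}{3}E_2$, and take a critical point $\tau^\ast$ of $E_2$. By Theorem \ref{F-one} it lies in some $\gamma(F_0)$ with $c\neq 0$; writing $\tau^\ast=\gamma\tau$ with $\tau=\gamma^{-1}(\tau^\ast)\in F_0$ and differentiating the law in $\tau$ (using $(\gamma\tau)'=(c\tau+d)^{-2}$), the condition $\eta_1'(\tau^\ast)=0$ becomes
\[
2c(c\tau+d)\eta_1(\tau)+(c\tau+d)^2\eta_1'(\tau)-2\pi i c^2=0 .
\]
Substituting (\ref{d-eta11}) for $\eta_1'(\tau)$ and clearing denominators, the left-hand side collapses to a perfect square: with $w=c\tau+d$ one obtains $(w\eta_1(\tau)-2\pi i c)^2=\tfrac{1}{12}w^2 g_2(\tau)$. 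Taking square roots and rearranging (dividing by $c\neq0$) yields precisely (\ref{c=tau}) with $C=-d/c$, the sign $\pm$ recording the two roots. Every step reverses, so $\gamma\tau$ is a critical point if and only if $\tau=\tau(-d/c)$, which is well defined by the uniqueness in Theorem \ref{Unique-pole}; combined with Theorems \ref{eta-thm0} and \ref{F-one} this identifies the unique critical point of $\gamma(F_0)$ and proves (\ref{cf0}). Since $-d/c=0$ would force $bc=-1$ and $-d/c=1$ would force $d=-c$ even, both impossible for $c$ even and $d$ odd, we have $-d/c\in\mathbb{R}\setminus\{0,1\}$, giving $\mathcal{C}\subseteq\mathcal{C}_-\cup\mathcal{C}_0\cup\mathcal{C}_+$.

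Next I would analyse the parameter set $S:=\{-d/c\}$. Because $c$ is even and $ad-bc=1$ forces $d$ odd with $\gcd(c,d)=1$, while conversely Bézout completes any coprime pair $(c,d)$ with $c$ even and $d$ odd to an element of $\Gamma_0(2)$, the set $S$ is exactly the rationals whose lowest-terms form has odd numerator and even denominator. These contain all $m/2^k$ with $m$ odd, hence are dense in $\mathbb{R}$, and in particular dense in each of the three components $(-\infty,0)$, $(0,1)$, $(1,+\infty)$ of $\mathbb{R}\setminus\{0,1\}$. By continuity of $C\mapsto\tau(C)$ on each component, $\mathcal{C}$ is dense in each of $\mathcal{C}_-,\mathcal{C}_0,\mathcal{C}_+$, so $\overline{\mathcal{C}}\supseteq\mathcal{C}_-\cup\mathcal{C}_0\cup\mathcal{C}_+$.

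For the reverse inclusion in (\ref{cf1}) it remains to check that the union of the three curves is closed in $F_0$. A limit point of $\mathcal{C}$ in $F_0$ equals $\lim\tau(C_n)$ for some $C_n\in S$; either $(C_n)$ subconverges to a finite $C^\ast\in\mathbb{R}\setminus\{0,1\}$, in which case the limit is $\tau(C^\ast)$ and lies on a curve, or $C_n\to 0$, $1$, or $\pm\infty$. Ruling out the second case is the main obstacle: I would show that $\tau(C)$ leaves every compact subset of $F_0$ as $C\to 0,1,\pm\infty$, namely that $\tau(C)$ approaches one of the cusps $0,1,i\infty$, none of which lies in $F_0\subset\mathbb{H}$. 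This requires the asymptotic analysis of the multivalued relation (\ref{c=tau}) near its excluded parameters, which is the delicate content developed in Section 2 around Theorem \ref{Unique-pole}. Granting it, no limit point of $\mathcal{C}$ can lie in $F_0\setminus(\mathcal{C}_-\cup\mathcal{C}_0\cup\mathcal{C}_+)$, and (\ref{cf1}) follows.
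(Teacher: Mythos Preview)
Your argument is correct and follows essentially the same path as the paper. For (\ref{cf0}) the paper transforms $12\eta_1^2-g_2$ directly via the modular identities (\ref{c-38}) to obtain $12\eta_1(\gamma\tau)^2-g_2(\gamma\tau)=c^2(c\tau+d)^2 f_{-d/c}(\tau)$, whereas you differentiate the quasi-modular law and then substitute (\ref{d-eta11}); these are the same computation in a different order, and your observation that the result factors as a perfect square is exactly the content of $f_{-d/c}(\tau)=0$. For density the paper argues by cases that $Q_0$ is dense in $\mathbb{Q}$, while your shortcut via dyadics $m/2^k$ with $m$ odd is a clean simplification. You are also more explicit than the paper about the reverse inclusion in (\ref{cf1}): the needed fact that $\tau(C)$ leaves every compactum of $F_0$ as $C\to 0,1,\pm\infty$ is precisely Lemma~\ref{lemma-11} (in Section~\ref{tauc}, not Section~2), which the paper invokes only implicitly at that point.
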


\begin{remark} In fact, we will prove $\tau(C)\in \mathring{F}_{0}$, where $\mathring{F}_{0}=F_{0}\backslash \partial F_{0}$ denotes the
set of interior points of $F_{0}$.  Given $\gamma=%
\begin{pmatrix}
a & b\\
c & d
\end{pmatrix}
\in \Gamma_{0}(2)/\{ \pm I_{2}\}$ with $c\neq 0$, we will prove in Theorem \ref{Coro-1} that the unique critical point of $E_{2}(\tau)$ in $\gamma(F_0)$ is precisely $\frac{a\tau(\tfrac{-d}{c})+b}{c\tau(\tfrac{-d}{c})+d}\in \gamma(\mathring{F}_{0})$.
Given $\gamma_{j}=%
\begin{pmatrix}
a_{j} & b_{j}\\
c_{j} & d_{j}%
\end{pmatrix}
\in \Gamma_{0}(2)/\{ \pm I_{2}\}$ with $c_{j}\not =0$ such that $\gamma
_{1}\not =\pm \gamma_{2}$, we have $\gamma_{1}(\mathring{F}_{0})\cap
\gamma_{2}(\mathring{F}_{0})=\emptyset$ (note that $\gamma_{1}(\partial F_{0})\cap
\gamma_{2}(\partial F_{0})\neq\emptyset$ may happen) and so%
\[
\frac{a_{1}\tau(\tfrac{-d_{1}}{c_{1}})+b_{1}}{c_{1}\tau(\tfrac{-d_{1}}{c_{1}%
})+d_{1}}\not =\frac{a_{2}\tau(\tfrac{-d_{2}}{c_{2}})+b_{2}}{c_{2}\tau
(\tfrac{-d_{2}}{c_{2}})+d_{2}}.
\]
Therefore, the map from $\mathcal{C}$ to the set of critical points of $E_{2}(\tau)$ is one-to-one.
The above results completely locate all the critical points of the Eisenstein series $E_{2}(\tau)$ or equivalently $\eta_{1}(\tau)$. To the best of our knowledge, such fundamental results have not appeared in the literature and are new. We believe that they will have important applications. For example, we consider $\tau=\frac{1}{2}+ib$ with $b>0$. Then $\eta_{1}(\tau)\in \mathbb{R}$. In order to study the behavior of the Green function on rhombus tori, Wang and the second author \cite{LW} considered the monotone property of $\eta_{1}(\tau)$ and  their numerical computation \cite[Figure 2]{LW} suggests that $\eta_{1}$ should increase from $0$ to some $b_0$ and then decrease after $b_0$, but they can not prove this assertion in \cite{LW} because (\ref{eisenstein}) implies \[\frac{3}{\pi^2}\eta_1(\tfrac{1}{2}+ib)=1-24\sum_{n=1}^{\infty}(-1)^nb_{n} e^{-2n\pi b},\quad b_n=\sum_{1\leq d|n}d>0,\nonumber\]from which it seems difficult to obtain the monotone property shown in \cite[Figure 2]{LW}. Now this assertion is confirmed by the following corollary.
\end{remark}

\begin{corollary}\label{line1/2} There exists $b_0\in (\frac{5}{24},\frac{1}{2\sqrt{3}})$ such that $\eta
_{1}(\frac12+ib)$ is strictly increasing for $b\in (0, b_0)$ and strictly decreasing for $b\in (b_0,+\infty)$.
\end{corollary}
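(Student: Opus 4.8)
The plan is to reduce the corollary to the sign behaviour of a single real function. Write $\tau=\tfrac12+ib$ with $b>0$. Since $E_2$ and $E_4$ have real $q$-coefficients and period $1$, and $-\bar\tau=\tau-1$ on this line, both $\eta_1(\tau)$ and $g_2(\tau)=\tfrac{4\pi^4}{3}E_4(\tau)$ are real there. Using $\tfrac{d}{db}=i\tfrac{d}{d\tau}$ together with (\ref{d-eta11}),
\[
\frac{d}{db}\,\eta_1\bigl(\tfrac12+ib\bigr)=\frac{1}{2\pi}\Bigl(\tfrac{1}{12}g_2(\tau)-\eta_1(\tau)^2\Bigr)=:\frac{1}{2\pi}\,g(b),
\]
a real quantity, and by (\ref{d-eta11}) again $g(b)=0$ holds exactly at the critical points of $E_2$ lying on the line $\operatorname{Re}\tau=\tfrac12$. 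Thus it suffices to prove that $g$ is positive on $(0,b_0)$ and negative on $(b_0,\infty)$ for a single $b_0\in(\tfrac{5}{24},\tfrac1{2\sqrt3})$.

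First I would control $g$ for $b\ge\tfrac12$. The half-line $\{\tfrac12+ib:b>\tfrac12\}$ lies in $\mathring F_0$, and by Theorem \ref{F-one} (case $c=0$) $F_0$ carries no critical point, so $g$ never vanishes there; since the Ramanujan identity $E_2'=\tfrac{\pi i}{6}(E_2^2-E_4)$ and (\ref{eisenstein}) give $g(b)=32\pi^4\sum_{n\ge1}(-1)^n n\,b_n\,e^{-2\pi nb}$, whose $n=1$ term dominates for large $b$ and is negative, continuity forces $g<0$ on all of $(\tfrac12,\infty)$. The key structural observation is that the opposite segment lies in one fundamental domain: with $\gamma=\bigl(\begin{smallmatrix}1&-1\\2&-1\end{smallmatrix}\bigr)\in\Gamma_0(2)$ one has $\gamma\cdot(\tfrac12+ib)=\tfrac12+\tfrac{i}{4b}$ and $\gamma^2=-I$, so $\{\tfrac12+ib:0<b<\tfrac12\}=\gamma(\{\tfrac12+ib':b'>\tfrac12\})\subset\gamma(\mathring F_0)$. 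By Theorem \ref{eta-thm0} the fundamental domain $\gamma(F_0)$ contains at most one critical point, hence $g$ has at most one zero on $(0,\tfrac12)$.

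It remains to evaluate $g$ at the two stated abscissae. The upper bound is essentially exact: at $b=\tfrac1{2\sqrt3}$ we have $\gamma\cdot(\tfrac12+\tfrac{i}{2\sqrt3})=e^{i\pi/3}=:\rho$, the order-three elliptic point, where $E_4(\rho)=0$; weight-$4$ modularity of $g_2$ then forces $g_2(\tfrac12+\tfrac{i}{2\sqrt3})=0$. Moreover the non-holomorphic weight-two form $E_2-\tfrac{3}{\pi\operatorname{Im}\tau}$ vanishes at the elliptic point $\rho$, so $E_2(\rho)=\tfrac{2\sqrt3}{\pi}$, i.e.\ $\eta_1(\rho)=\tfrac{2\pi}{\sqrt3}$; feeding this through the special case $\eta_1(\gamma\tau)=(2ib)^2\eta_1(\tau)+8\pi b$ of (\ref{sl-eta}) yields $\eta_1(\tfrac12+\tfrac{i}{2\sqrt3})=2\pi\sqrt3\neq0$, whence
\[
g\bigl(\tfrac1{2\sqrt3}\bigr)=\tfrac1{12}g_2\bigl(\tfrac12+\tfrac{i}{2\sqrt3}\bigr)-\eta_1\bigl(\tfrac12+\tfrac{i}{2\sqrt3}\bigr)^2=-12\pi^2<0 .
\]
For the lower bound I would show $g(\tfrac5{24})>0$ directly from $g(b)=32\pi^4\sum_{n\ge1}(-1)^n n\,b_n\,e^{-2\pi nb}$: at $b=\tfrac5{24}$ the nome is $e^{-5\pi/12}\approx0.27$, the partial sum through $n=2$ is already positive, and the remaining terms form an alternating series of eventually decreasing magnitude whose tail is bounded by its leading omitted term, so the total sum is positive.

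Assembling: $g(\tfrac5{24})>0$ and $g(\tfrac1{2\sqrt3})<0$ with $\tfrac5{24}<\tfrac1{2\sqrt3}<\tfrac12$ produce a sign change, hence a zero of $g$, in $(\tfrac5{24},\tfrac1{2\sqrt3})$; by the at-most-one-zero bound this zero $b_0$ is the unique zero of $g$ on $(0,\tfrac12)$. Since $(0,b_0)$ contains $\tfrac5{24}$ and $(b_0,\tfrac12)$ contains $\tfrac1{2\sqrt3}$, the signs propagate to give $g>0$ on $(0,b_0)$ and $g<0$ on $(b_0,\tfrac12)$, while $g<0$ on $(\tfrac12,\infty)$ by the second paragraph (the value at $b=\tfrac12$ being negative because $\tfrac{1+i}{2}$ is the elliptic fixed point of $\gamma$, not a critical point, by the Remark). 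This gives the claimed monotonicity with $b_0\in(\tfrac5{24},\tfrac1{2\sqrt3})$. The conceptual crux is the second paragraph's reduction of the inner segment to a single fundamental domain, which converts a one-variable intermediate-value argument into the global uniqueness of $b_0$; I expect the only genuinely technical point to be the rigorous tail estimate establishing $g(\tfrac5{24})>0$, since there the nome is not especially small (one may instead transport the estimate to $\gamma\cdot(\tfrac12+\tfrac{5i}{24})=\tfrac12+\tfrac{6i}{5}$, where the nome $e^{-12\pi/5}$ is minute and two terms suffice).
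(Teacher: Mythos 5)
Your proposal is correct in outline, and it takes a genuinely different route in the quantitative part. The paper covers the line $\operatorname{Re}\tau=\tfrac12$ by $F_0\cup\gamma(F_0)$ with the very same $\gamma=\left(\begin{smallmatrix}1&-1\\2&-1\end{smallmatrix}\right)$, but instead of an intermediate-value argument it invokes Theorem \ref{Coro-1}-(2) to identify the unique critical point in $\gamma(F_0)$ as $\gamma\cdot\tau(\tfrac12)$, then quotes Theorem \ref{lemma-15}-(iii) (namely $\tau(\tfrac12)=\tfrac12+i\hat b$ with $\hat b\in(\tfrac{\sqrt3}{2},\tfrac65)$) to get $b_0=\tfrac{1}{4\hat b}\in(\tfrac{5}{24},\tfrac{1}{2\sqrt3})$, and fixes the signs from the endpoint behaviour $\eta_1(\tfrac12+ib)\to-\infty$ as $b\to0^+$ (via (\ref{sl-eta})) and $\eta_1(\tfrac12+i\tfrac{\sqrt3}{2})=\tfrac{2\pi}{\sqrt3}>\tfrac{\pi^2}{3}$. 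You replace the appeal to Theorem \ref{lemma-15}-(iii) by two explicit sign evaluations of $g(b)=\tfrac1{12}g_2-\eta_1^2$; note that under $b\mapsto\tfrac{1}{4b}$ your test points $\tfrac{5}{24}$ and $\tfrac{1}{2\sqrt3}$ are exactly the paper's $\tfrac65$ and $\tfrac{\sqrt3}{2}$, so the two proofs are dual to each other. What your route buys: it is self-contained at the point where the corollary appears (the paper's ordering is delicate, since Theorem \ref{lemma-15}-(iii) rests on Lemma \ref{lemma33}, whose proof in turn cites this corollary, and your argument sidesteps that loop), and your exact value $g(\tfrac{1}{2\sqrt3})=-12\pi^2$, obtained through the elliptic point $\rho$ and consistent with (\ref{eta1rho}), is clean. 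Your sign determination on each interval is also slightly tidier than the paper's, which needs the two boundary limits of $\eta_1$.

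The one step that fails as primarily stated is $g(\tfrac{5}{24})>0$ ``from the partial sum through $n=2$ plus an alternating-tail bound.'' With $x=e^{-5\pi/12}\approx0.270$, the terms $(-1)^n nb_nx^n$ of $g/(32\pi^4)$ are approximately $-0.270,\,+0.437,\,-0.236,\,+0.149,\,-0.043,\dots$; the $n=3$ term exceeds in magnitude the two-term partial sum $\approx0.167$ (indeed the three-term partial sum is \emph{negative}), and the magnitudes $nb_nx^n$ are not monotone, so the first-omitted-term bound is not available at $n=2$. You would have to truncate near $n=5$ and then prove a genuine tail estimate (e.g.\ via $b_k<e^{k\pi b/4}$ as in Lemma \ref{lemma33}). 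Your parenthetical fallback is the correct fix and should be the main argument: from $\eta_1(\tfrac12+\tfrac{i}{4b})=-4b^2\eta_1(\tfrac12+ib)+8\pi b$ and $g_2(\tfrac12+\tfrac{i}{4b})=16b^4g_2(\tfrac12+ib)$ one gets $g(\tfrac{1}{4b})=16b^4g(b)+64\pi b^3\eta_1(\tfrac12+ib)-64\pi^2b^2$; at $b=\tfrac65$ the nome is $e^{-12\pi/5}\approx5.3\times10^{-4}$, and crude bounds of the type (\ref{ex-3}) and (\ref{ex-6}) give $\eta_1(\tfrac12+\tfrac65 i)>\tfrac{\pi^2}{3}$ and $g(\tfrac65)\ge-46\pi^4e^{-12\pi/5}$, whence $g(\tfrac{5}{24})\ge\tfrac{64\pi^3}{3}(\tfrac65)^3-64\pi^2(\tfrac65)^2-16(\tfrac65)^4\cdot46\pi^4e^{-12\pi/5}\approx1143-910-79>0$. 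Unsurprisingly, this transported computation is essentially the one by which the paper proves $\hat b<\tfrac65$ inside Theorem \ref{lemma-15}-(iii). With that repair made explicit, your proof is complete.
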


One of our motivations of studying critical points of $\eta_{1}(\tau)$ comes from
the Green function on flat tori. Let $E_{\tau}:=\mathbb{C}/(\mathbb{Z}+\mathbb{Z}\tau)$ be a flat torus and $G(z)=G(z;\tau)$
be the Green function on the torus $E_{\tau}$:%
\begin{equation*}
-\Delta G(z;\tau)=\delta_{0}-\frac{1}{\left \vert E_{\tau}\right \vert
}\text{ \ on }E_{\tau},\quad
\int_{E_{\tau}}G(z;\tau)=0,
\end{equation*}
where $\delta_{0}$ is the Dirac measure at $0$ and $\left \vert E_{\tau
}\right \vert $ is the area of the torus $E_{\tau}$. See \cite{LW} for a detailed study of $G(z;\tau)$.
In \cite{CLW,LW2,LW3}, Chai, Wang and the second author introduced a multiple Green function $G_n$, $n\in\mathbb{N}$. Geometrically, any critical point of $G_n$ is closely related to bubbling phenomenon of nonlinear partial differential equations with exponential nonlinearities in two dimension; see \cite{CLW, LW3} for typical examples. Thus, understanding the critical points of $G_n$ is important for applications.

For the case $n=2$,
the
multiple Green function $G_{2}$ is defined by
\begin{equation}
G_{2}(z_{1},z_{2};\tau):=G(z_{1}-z_{2};\tau)-2G(z_{1};\tau)-2G(z_{2};\tau),
\label{51100}%
\end{equation}
where $0\neq z_1\neq z_2\neq 0$. A critical point $(a_{1},a_{2})$ of $G_{2}$ satisfies%
\begin{equation*}
2\nabla G(a_{1};\tau)=\nabla G(a_{1}-a_{2};\tau),\text{ \ }2\nabla
G(a_{2};\tau)=\nabla G(a_{2}-a_{1};\tau).
\end{equation*}
Clearly if $(a_{1},a_{2})$ is a critical point then so does
$(a_{2},a_{1})$, and we consider such two critical points to be
\emph{the same one}. A critical point $(a_{1},a_{2})$ is
called a \emph{trivial critical point }if%
\[
\{a_{1},a_{2}\}=\{-a_{1},-a_{2}\} \text{ \ in \ }E_{\tau}\text{.}%
\]
Recall $\omega_{1}=1, \omega_{2}=\tau$ and $\omega_{3}=1+\tau$.
It is known \cite{LW3} that $G_{2}$ has only five trivial critical points
$\{(\frac{1}{2}\omega_{i},\frac{1}{2}\omega_{j})|i\not =j\}$ and
$\{(q_{\pm},-q_{\pm})|\wp(q_{\pm}|\tau)=\pm \sqrt{g_{2}(\tau)/12}\}$, and the Hassian at $(q_{\pm},-q_{\pm})$ is given by
\begin{align}
&\det D^{2}G_{2}(q_{\pm},-q_{\pm};\tau)\nonumber\\
=&\frac{3|g_{2}(\tau)|}{4\pi
^{4}\operatorname{Im}\tau}|\wp(q_{\pm}|\tau)+\eta_{1}(\tau)|^{2}\operatorname{Im}
\bigg(\tau-\frac{2\pi i}{\eta_{1}(\tau)\pm \sqrt{g_{2}(\tau)/12}}\bigg). \label{SZ-22}%
\end{align}
From here and (\ref{c=tau}), we will prove in Section \ref{geometricinter} that the three curves coincide with degeneracy curves of $G_2$. The Hassian at $(\frac{1}{2}\omega_{i},\frac{1}{2}\omega_{j})$ is related to the critical points of the classical function $e_k(\tau):=\wp(\frac{\omega_k}{2}|\tau)$, $\{i,j,k\}=\{1,2,3\}$. We will study the critical points of $e_k(\tau)$ in another paper.

Our proof of the existence and uniqueness of $\tau(C)$ relies on a {\it pre-modular form} $Z_{r,s}^{(2)}(\tau)$ of weight $3$ introduced in \cite{LW2}. See also \cite{Dahmen}. For each pair $(r,s)\in \mathbb{R}^2\setminus \frac{1}{2}\mathbb{Z}^2$, $Z_{r,s}^{(2)}(\tau)$ is defined by
\[
Z_{r,s}^{(2)}(\tau):=Z_{r,s}(\tau)^{3}-3\wp(r+s\tau|\tau)Z_{r,s}(\tau
)-\wp^{\prime}(r+s\tau|\tau),
\]
where $Z_{r,s}(\tau)$ is introduced by Hecke \cite{Hecke}:
\begin{align}
Z_{r,s}(\tau)  :=\zeta(r+s\tau|\tau)-r\eta_{1}(\tau)-s\eta_{2}%
(\tau). \label{c-70}%
\end{align}
Indeed, if $(r,s)\in \mathbb{Q}^2$, $Z_{r,s}(\tau)$ is the well-known Eisenstein series of weight $1$ with characteristic $(r,s)$; see \cite[p.139]{Diamond-Shurman}. It is not difficult to see that $Z_{r,s}(\tau)$ is a modular form of weight $1$ with respect to $\Gamma(N)$ if $(r,s)$ is a $N$-torsion point, so $Z_{r,s}^{(2)}(\tau)$ is a modular form of weight $3$. See Section \ref{modularform}. The importance of $Z_{r,s}^{(2)}(\tau)$ lies on the fact that at any zero $\tau_0$ of $Z_{r,s}^{(2)}(\cdot)$, the pair $(r,s)$ contains all the monodromy data of the
classical Lam\'{e} equation
\begin{equation}\label{lame}
y''(z)=[n(n+1)\wp(z|\tau_0)+B]y(z),\quad n=2
\end{equation}
for some $B\in\mathbb{C}$; see \cite[Theorem 4.3]{LW2}. Therefore, it is important to study the zero of $Z_{r,s}^{(2)}(\cdot)$, which has not been settled yet. In this paper, we study the zero structure of $Z_{r,s}^{(2)}(\cdot)$. Define four open triangles (see Figure 1 in Section \ref{modularform}):
\begin{equation}%
\begin{array}
[c]{l}%
\triangle_{0}:=\{(r,s)\mid0<r,s<\tfrac{1}{2},\text{ }r+s>\tfrac{1}{2}\},\\
\triangle_{1}:=\{(r,s)\mid \tfrac{1}{2}<r<1,\text{ }0<s<\tfrac{1}{2},\text{
}r+s>1\},\\
\triangle_{2}:=\{(r,s)\mid \tfrac{1}{2}<r<1,\text{ }0<s<\tfrac{1}{2},\text{
}r+s<1\},\\
\triangle_{3}:=\{(r,s)\mid r>0,\text{ }s>0,\text{ }r+s<\tfrac{1}{2}\}.
\end{array}
\label{rectangle}%
\end{equation}

\begin{theorem}
\label{thm2}Let $(r,s)\in \lbrack0,1]\times \lbrack0,\frac{1}{2}]\backslash
\frac{1}{2}\mathbb{Z}^{2}$. Then $Z_{r,s}^{(2)}(\tau)=0$ has a solution $\tau$
in $F_{0}$ if and only if $(r,s)\in \triangle_{1}\cup \triangle_{2}\cup
\triangle_{3}$. Furthermore, for any $(r,s)\in \triangle_{1}\cup \triangle
_{2}\cup \triangle_{3}$, the zero $\tau \in F_{0}$ is unqiue and satisfies
$\tau \in \mathring{F}_{0}$.
\end{theorem}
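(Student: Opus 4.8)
The plan is to fix $(r,s)$ and count the zeros of the holomorphic function $\tau\mapsto Z_{r,s}^{(2)}(\tau)$ in $F_{0}$ by the argument principle, and then to show that this count is locally constant in $(r,s)$ and jumps exactly across the edges separating the four triangles. Throughout I would exploit three structural identities for $Z_{r,s}^{(2)}$: the translation law $Z_{r,s}^{(2)}(\tau+1)=Z_{r+s,s}^{(2)}(\tau)$, the inversion law $Z_{r,s}^{(2)}(-1/\tau)=\tau^{3}Z_{-s,r}^{(2)}(\tau)$ (both coming from the behaviour of $\zeta,\wp,\wp'$ under $SL(2,\mathbb{Z})$ together with the Legendre relation), and the reflection identity $\overline{Z_{r,s}^{(2)}(\tau)}=Z_{r,-s}^{(2)}(-\bar\tau)$ valid for real $(r,s)$. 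The periodicity $Z_{r+1,s}^{(2)}=Z_{r,s+1}^{(2)}=Z_{r,s}^{(2)}$ and the oddness $Z_{-r,-s}^{(2)}=-Z_{r,s}^{(2)}$ justify restricting the parameters to $[0,1]\times[0,\tfrac12]$.

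First I would record the cusp asymptotics. Writing $z=r+s\tau$ and letting $\tau\to i\infty$ with $0<s<1$, one has $\wp(z|\tau)\to-\tfrac{\pi^{2}}{3}$, $\wp'(z|\tau)\to0$ and $Z_{r,s}(\tau)\to\pi i(2s-1)$, whence
\[
Z_{r,s}^{(2)}(\tau)\longrightarrow 4\pi^{3}i\,s(1-s)(2s-1)\qquad(\tau\to i\infty).
\]
Via the translation and inversion laws this computes the behaviour at all three cusps of $F_{0}$: the limit at $i\infty$ is governed by $s(1-s)(2s-1)$, the one at $0$ by $r(1-r)(2r-1)$, and the one at $1$ by $(r+s)(1-r-s)(2(r+s)-1)$ (the last two after the automorphy factor $\tau^{-3}$, so that $Z_{r,s}^{(2)}$ blows up like $\tau^{-3}$ there). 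The crucial observation is that these cubic factors vanish exactly on the lines $s=\tfrac12$, $r=\tfrac12$ and $r+s\in\{\tfrac12,1\}$, which are precisely the edges separating the four triangles. For $(r,s)$ interior to a single triangle all three limits are nonzero, so no zero can escape into a cusp and the argument-principle contour, truncated by a horocycle at each cusp, has a well-defined winding.

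Next I would compute the winding number $\tfrac{1}{2\pi}\Delta_{\partial F_{0}}\arg Z_{r,s}^{(2)}$, with $\partial F_{0}$ traversed positively and truncated at the cusps. The two vertical sides are compared through $Z_{r,s}^{(2)}(\tau+1)=Z_{r+s,s}^{(2)}(\tau)$; the bottom arc is analysed using the order-two element $\bigl(\begin{smallmatrix}1&-1\\2&-1\end{smallmatrix}\bigr)\in\Gamma_{0}(2)$ fixing its apex $\tfrac{1+i}{2}$; and the reflection identity pins down the argument along $\operatorname{Re}\tau=0$. The $\tau^{-3}$ blow-up at $0$ and $1$ supplies the turning associated with the zero-angle corners there. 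Assembling these contributions, I expect the winding number to equal $0$ on $\triangle_{0}$ and $1$ on $\triangle_{1}\cup\triangle_{2}\cup\triangle_{3}$. Since zeros depend continuously on $(r,s)$ by Hurwitz's theorem and, by the previous step, cannot meet $\partial F_{0}$ or a cusp inside a triangle, the winding number is a constant integer on each open triangle; I would fix its absolute value by a boundary probe, e.g.\ letting $s\to0^{+}$ so that $z\to r\in(0,1)$ is real and $Z_{r,0}^{(2)}(iy)\in\mathbb{R}$ on the imaginary axis (counting sign changes directly gives $1$ for $\triangle_{2},\triangle_{3}$), together with one torsion value inside $\triangle_{0}$, where $Z_{r,s}^{(2)}$ is a genuine weight-$3$ form for some $\Gamma(N)$ and the count in $F_{0}$ can be read from the valence formula.

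Finally, winding number exactly $1$ forces a unique, necessarily simple, zero in $\mathring{F}_{0}$ for $(r,s)\in\triangle_{1}\cup\triangle_{2}\cup\triangle_{3}$, while winding number $0$ combined with non-vanishing on $\partial F_{0}$ excludes any zero in $\triangle_{0}$; the same non-vanishing places the zero in the interior in the remaining cases. The main obstacle is the content of the third step: proving that $Z_{r,s}^{(2)}$ does not vanish on the open boundary arcs for interior $(r,s)$, and carrying out the winding bookkeeping at the two cusps $0,1$ where the form blows up like $\tau^{-3}$. The most delicate point is the edge $r+s=1$ separating $\triangle_{1}$ and $\triangle_{2}$: there the cusp-$1$ limit degenerates as well, yet the count must remain $1$, so one has to verify that a zero merging into the cusp on one side is compensated by a zero emerging on the other rather than producing a net change. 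Here the precise orientation of the argument along the bottom arc and the $\Gamma_{0}(2)$-identification $0\sim1$ of the two cusps must be used with care.
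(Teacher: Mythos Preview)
Your outline has the right skeleton---cusp asymptotics plus Hurwitz continuity to show the zero count is constant on each open triangle---but the step you flag as ``the main obstacle'' is in fact a genuine gap that your methods do not close. To run the argument principle on $\partial F_{0}$ you must know that $Z_{r,s}^{(2)}(\tau)\neq0$ on the three boundary arcs $i\mathbb{R}^{+}$, $1+i\mathbb{R}^{+}$ and $|\tau-\tfrac12|=\tfrac12$. The paper states explicitly (Lemma~\ref{neq-zero} and the remark following it) that this does \emph{not} appear to follow from the formula for $Z_{r,s}^{(2)}$; instead it is deduced from a deep nonlinear PDE result (Theorem~B): the mean-field equation $\Delta u+e^{u}=16\pi\delta_{0}$ has no solution on $E_{\tau}$ for $\tau\in\{e^{\pi i/3}\}\cup i\mathbb{R}^{+}$, and solvability is equivalent to $Z_{r,s}^{(2)}(\tau)=0$ for some $(r,s)$. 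Your reflection identity and the $\Gamma_{0}(2)$ side-pairings only relate $Z_{r,s}^{(2)}$ on one boundary arc to $Z_{r',s'}^{(2)}$ on another, with $(r',s')\neq(r,s)$ in general, so they do not by themselves exclude zeros there.

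The same issue undermines your winding-number computation. Because the $\Gamma_{0}(2)$-identifications of the sides of $F_{0}$ change the parameter $(r,s)$, the boundary contributions do not pair off for a fixed $(r,s)$, and there is no clean valence formula available. The paper avoids this entirely: once boundary non-vanishing is known, it computes the zero count only at carefully chosen \emph{torsion} points $(r,s)\in Q_{N}$ (specifically $N=3$ for $\triangle_{0}$ and $N=6$ for $\triangle_{1},\triangle_{2},\triangle_{3}$), because the product $M_{N}(\tau)=\prod_{(r,s)\in Q_{N}}Z_{r,s}^{(2)}(\tau)$ \emph{is} a genuine $SL(2,\mathbb{Z})$ modular form and the classical valence formula $\sum\nu_{\tau}+\nu_{\infty}+\tfrac12\nu_{i}+\tfrac13\nu_{\rho}=k/12$ pins down the total zero count. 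A further combinatorial argument with the $SL(2,\mathbb{Z})$-transforms $\gamma_{1},\gamma_{2}$ in $F_{0}=F\cup\gamma_{1}(F)\cup\gamma_{2}(F)$ then isolates which $(r,s)$ carries the zero. Finally, the statement on the \emph{edges} $\cup_{k}\partial\triangle_{k}$ (no zeros there) is not a consequence of the interior count as it was for the simpler $Z_{r,s}$; the paper needs a separate counting argument with $M_{2N}$ for large prime $N$ (Step~3). Your sketch does not address this case either.
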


Remark that $Z_{r,s}^{(2)}(\tau)\equiv \infty$ if $(r,s)=(0,0)$. To prove Theorems \ref{F-one}-\ref{Location}, we will "blow up" $Z_{r,s}^{(2)}(\tau)$ by considering $\lim_{s\to 0}\frac{1}{s}Z_{-Cs,s}^{(2)}(\tau)$, $C\in\mathbb{R}$, and the existence and uniqueness of $\tau(C)$ will follow from
that of the zero of $Z_{-Cs,s}^{(2)}(\tau)$ as $s\to 0$.

The rest of this paper is organized as follows. Theorem \ref{thm2} will be proved in Section \ref{modularform}. In Section \ref{tauc}, we apply Theorem \ref{thm2} to prove the existence and uniqueness of $\tau(C)$. See Theorem \ref{Unique-pole}.
In Section \ref{NOZ}, we give the detailed proofs of our main results Theorems \ref{eta-thm0}-\ref{Location} and Corollary \ref{line1/2}. Some precise characterizations of the three curves (see Theorem \ref{lemma-15}) will also be given. In Section \ref{geometricinter}, we introduce the relation between the three curves and the degeneracy curve of $G_2$ and prove the smoothness of the curves. Finally in Appendix A, we give another application of Theorem \ref{thm2}.

\section{Zeros of pre-modular forms}

\label{modularform}

This section is devoted to the proof of Theorem \ref{thm2}. First we recall the modularity of $g_2(\tau)$ and $\wp(z|\tau)$. Given any $\begin{pmatrix}
a & b\\
c & d
\end{pmatrix}
\in SL(2,\mathbb{Z})$, it is well known that
\begin{equation}g_{2}(\tfrac{a\tau+b}{c\tau+d})=(c\tau+d)^4g_{2}(\tau),\label{II-30-00}\end{equation}
\[
\wp \left(  \left.  \tfrac{z}{c\tau+d}\right \vert \tfrac{a\tau+b}{c\tau+d}\right)  =\left(
c\tau+d\right)  ^{2}\wp (z|\tau).\]
From here we can obtain
\[
\zeta \left(  \left.  \tfrac{z}{c\tau+d}\right \vert \tfrac{a\tau+b}{c\tau+d}\right)
=\left(  c\tau+d\right)  \zeta( z|\tau)  ,
\]
and so%
\begin{equation}%
\begin{pmatrix}
\eta_{2}(\tfrac{a\tau+b}{c\tau+d})\\
\eta_{1}(\tfrac{a\tau+b}{c\tau+d})
\end{pmatrix}
=(c\tau+d)\begin{pmatrix}
a & b\\
c & d
\end{pmatrix}
\begin{pmatrix}
\eta_{2}(\tau)\\
\eta_{1}(\tau)
\end{pmatrix}
. \label{II-31-1-00}%
\end{equation}
In the rest of this paper, we will freely use the formulas (\ref{II-30-00})-(\ref{II-31-1-00}).

As in \cite{Dahmen,LW2},  for any $(r,s)\in\mathbb{R}^{2}$, we define pre-modular forms
\begin{align}
Z_{r,s}(\tau)  :=&\zeta(r+s\tau|\tau)-r\eta_{1}(\tau)-s\eta_{2}%
(\tau)\nonumber \\
=&\zeta(r+s\tau|\tau)-(r+s\tau)\eta_{1}(\tau)+2\pi is, \label{c-7}%
\end{align}
\begin{equation}
Z_{r,s}^{(2)}(\tau):=Z_{r,s}(\tau)^{3}-3\wp(r+s\tau|\tau)Z_{r,s}(\tau
)-\wp^{\prime}(r+s\tau|\tau). \label{new3}%
\end{equation}
Since $\zeta(z|\tau)$ has simple poles at the lattice points
$\Lambda_{\tau}$, both $Z_{r,s}(\tau)$ and $Z_{r,s}^{(2)}(\tau)\equiv \infty$
provided $(r,s)  \equiv0$ mod $\mathbb{Z}^{2}$. If $(r,s)\in \frac{1}{2}\mathbb{Z}^{2}\backslash \mathbb{Z}^{2}$, where
\[\tfrac{1}{2}\mathbb{Z}^{2}:=\{(\tfrac{m}{2},\tfrac{n}{2}) \,|\, m,n\in\mathbb{Z}\},\]
then (\ref{quasi}) and the oddness of $\zeta(z|\tau)$ imply $Z_{r,s}%
(\tau)\equiv0$ and so $Z_{r,s}^{(2)}(\tau)\equiv0$, where we used $\wp'(\frac{\omega_k}{2})=0$. Therefore, we only consider $(
r,s)\in \mathbb{R}^{2}\backslash \frac{1}{2}\mathbb{Z}^{2}$. Then
both $Z_{r,s}(\tau)$ and $Z_{r,s}^{(2)}(\tau)$ are holomorphic in $\mathbb{H}$,
and it is easy to see that the following properties hold:

\begin{itemize}
\item[(i)] $Z_{r,s}(\tau)=\pm Z_{m\pm r,n\pm s}(\tau)$ and hence $Z_{r,s}^{(2)}(\tau)=\pm
Z_{m\pm r,n\pm s}^{(2)}(\tau)$ for any $(m,n)\in\mathbb{Z}^{2}$.

\item[(ii)] $Z_{r^{\prime},s^{\prime}}(\tau^{\prime})=(c\tau+d)Z_{r,s}(\tau)$
and hence $Z_{r^{\prime},s^{\prime}}^{(2)}(\tau^{\prime})=(c\tau+d)^{3}Z_{r,s}%
^{(2)}(\tau)$ for any $\gamma=
\begin{pmatrix}
a & b\\
c & d
\end{pmatrix}
\in SL(2,\mathbb{Z})$, where $\tau^{\prime}=\gamma \cdot \tau:=\frac{a\tau+b}{c\tau+d}$ and $(s^{\prime}%
,r^{\prime})=(s,r)\cdot \gamma^{-1}$.
\end{itemize}
\noindent In particular, when
$(r,s)\in Q_{N}$ is a $N$-torsion point for some $N\in\mathbb{N}_{\geq 3}$, where
\begin{equation}
Q_{N}:= \left \{  \left.  \left(  \tfrac{k_{1}}{N},\tfrac{k_{2}}%
{N}\right)  \right \vert \gcd(k_{1},k_{2},N)=1,\text{ }0\leq k_{1},k_{2}\leq
N-1\right \}  , \label{q-n}%
\end{equation}
and $\gamma\in \Gamma(N):=\{\gamma\in SL(2,\mathbb{Z})|\gamma\equiv I_2\operatorname{mod}N\}$, then $(r',s')\equiv(r,s)$ mod $\mathbb{Z}^2$. In other words, if $(r,s)\in Q_N$, then
\[Z_{r,s}\left(\tfrac{a\tau+b}{c\tau+d}\right)=(c\tau+d)Z_{r,s}(\tau),\quad Z_{r,s}^{(2)}\left(\tfrac{a\tau+b}{c\tau+d}\right)=(c\tau+d)^{3}Z_{r,s}%
^{(2)}(\tau)\]
hold for any $\gamma=\begin{pmatrix}
a & b\\
c & d
\end{pmatrix}\in\Gamma(N)$, namely $Z_{r,s}(\tau)$ and $Z_{r,s}^{(2)}(\tau)$ are \emph{modular forms} of weight $1$
and $3$, respectively, with respect to the principal congruence subgroup $\Gamma(N)$. Due to this reason, $Z_{r,s}(\tau)$ and $Z_{r,s}^{(2)}(\tau)$ are called \emph{pre-modular forms} in this paper as in \cite{LW2}.

We are interested in the zero structures of $Z_{r,s}(\tau)$ and $Z_{r,s}^{(2)}(\tau)$ for $(r,s)\in\mathbb{R}^2\setminus\frac{1}{2}\mathbb{Z}^2$.
By property (ii), we can restrict $\tau$ in the fundamental domain $F_{0}$ of $\Gamma
_{0}(2)$:
\[
F_{0}:=\{ \tau \in \mathbb{H}\ |\ 0\leqslant \  \text{Re}\  \tau \leqslant
1\  \text{and}\ |z-\tfrac{1}{2}|\geqslant \tfrac{1}{2}\},
\]
and by (i), we only need to consider $(r,s)\in \lbrack0,1]\times
\lbrack0,\frac{1}{2}]\backslash \frac{1}{2}\mathbb{Z}^{2}$. Recall the four open triangles defined in (\ref{rectangle}) (see Figure 1).
Clearly $[0,1]\times \lbrack0,\frac{1}{2}]=\cup_{k=0}^{3}\overline
{\triangle_{k}}$. The following result was proved in \cite{CKLW}.

\begin{figure}[btp]
\label{O5-1}\includegraphics[width=2.4in]{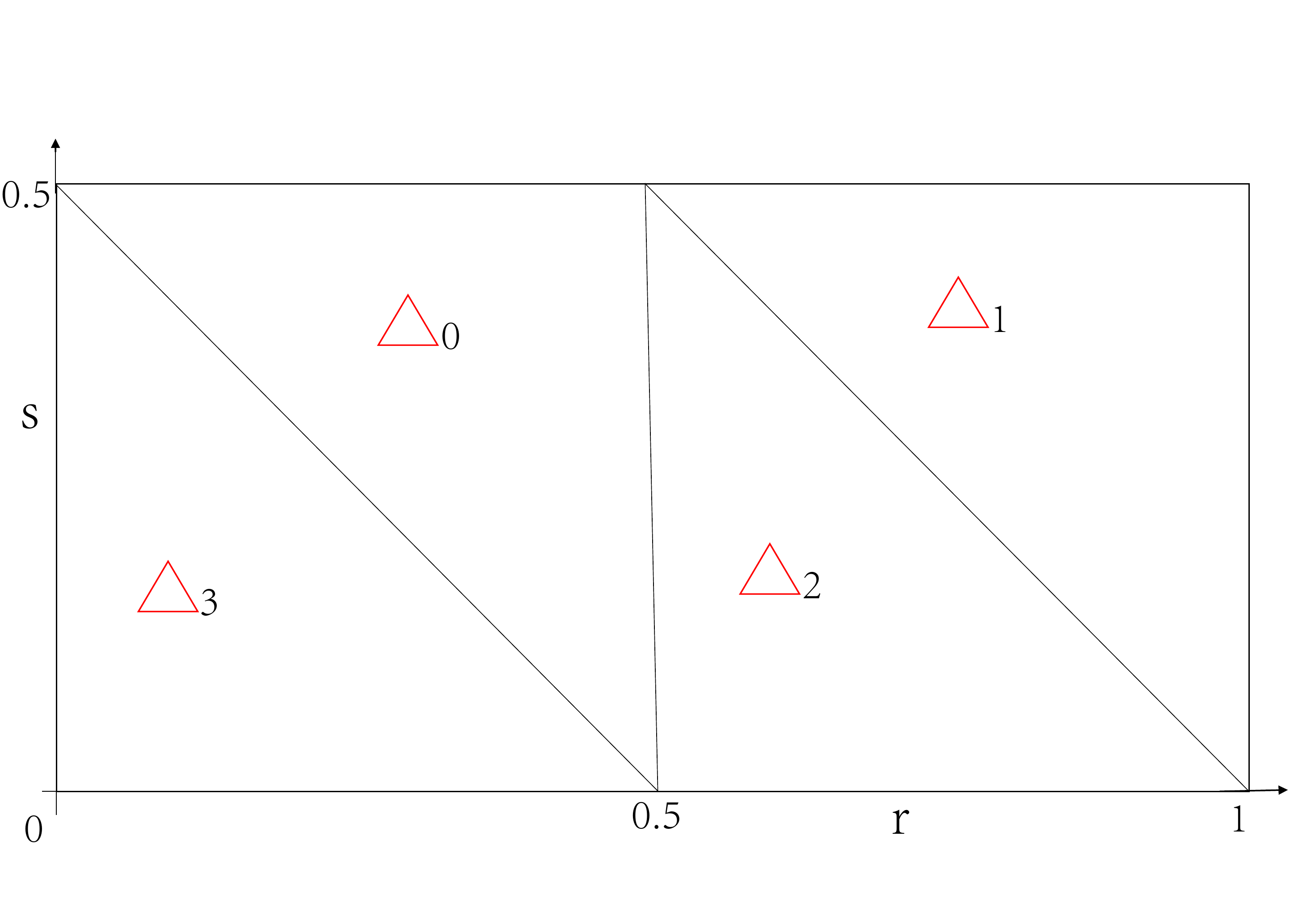}\caption{The four open trianlges $\triangle_{k}$.}%
\end{figure}

\medskip

\noindent{\bf Theorem A.} \cite{CKLW} {\it Let $(r,s)\in \lbrack0,1]\times \lbrack0,\frac{1}{2}]\backslash
\frac{1}{2}\mathbb{Z}^{2}$. Then $Z_{r,s}(\tau)=0$ has a solution $\tau$
in $F_{0}$ if and only if $(r,s)\in \triangle_{0}$. Furthermore, for any $(r,s)\in \triangle_{0}$, the zero $\tau \in F_{0}$ is unqiue and satisfies
$\tau \in \mathring{F}_{0}$}.
\medskip

In this paper, we will prove an analogous result for $Z_{r,s}^{(2)}(\tau)$.

\begin{theorem}[=Theorem \ref{thm2}]
\label{thm20}Let $(r,s)\in \lbrack0,1]\times \lbrack0,\frac{1}{2}]\backslash
\frac{1}{2}\mathbb{Z}^{2}$. Then $Z_{r,s}^{(2)}(\tau)=0$ has a solution $\tau$
in $F_{0}$ if and only if $(r,s)\in \triangle_{1}\cup \triangle_{2}\cup
\triangle_{3}$. Furthermore, for any $(r,s)\in \triangle_{1}\cup \triangle
_{2}\cup \triangle_{3}$, the zero $\tau \in F_{0}$ is unqiue and satisfies
$\tau \in \mathring{F}_{0}$.
\end{theorem}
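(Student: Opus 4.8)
The goal is to characterize exactly when $Z_{r,s}^{(2)}(\tau)=0$ has a solution in $F_0$, and to prove uniqueness, for $(r,s)$ in the rectangle $[0,1]\times[0,\tfrac12]\setminus\tfrac12\mathbb{Z}^2$. The key structural feature is that $Z_{r,s}^{(2)}(\tau)$ is a modular form of weight $3$ with respect to $\Gamma(N)$ when $(r,s)\in Q_N$, so I would first pass to torsion points where genuine modularity is available, control the number of zeros via a valence-type counting argument, and then recover the general real $(r,s)$ by continuity/density. The proof of the analogous Theorem A for $Z_{r,s}(\tau)$ in \cite{CKLW} presumably follows the same skeleton, so the plan is to adapt that argument to the weight-$3$ form.

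\textbf{Step 1: interpret the zeros via the Lam\'e/monodromy picture.}
The cited fact from \cite[Theorem 4.3]{LW2} says that a zero $\tau_0$ of $Z_{r,s}^{(2)}(\cdot)$ corresponds to the pair $(r,s)$ being the monodromy data of the Lam\'e equation \eqref{lame} with $n=2$ for some $B=B(\tau_0)$. I would exploit this dictionary in the reverse direction: fix $\tau_0\in\mathring F_0$ and let $B$ range over $\mathbb{C}$; each choice of $B$ produces a Lam\'e equation whose monodromy gives a pair $(r,s)$ modulo the symmetries in property (i) and the Hermite--Halphen/Brioschi parametrization. Thus the map $\tau\mapsto(r(\tau),s(\tau))$ (as $B$ varies) sweeps out a region of the $(r,s)$-square, and the question becomes: which $(r,s)$ are hit, and with what multiplicity? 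The boundary behaviour—as $\tau$ degenerates or as $B\to\infty$—should pin the image to $\triangle_1\cup\triangle_2\cup\triangle_3$ and its complement.

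\textbf{Step 2: counting zeros by the valence formula.}
For $(r,s)\in Q_N$ the form $Z_{r,s}^{(2)}$ is a holomorphic modular form of weight $3$ on $\Gamma(N)$, so $\sum_{\text{orbits}}\operatorname{ord}_\tau Z_{r,s}^{(2)}$ equals $\tfrac{3}{12}[\,SL(2,\mathbb{Z}):\Gamma(N)\,]/(\text{cusp/elliptic factors})$, i.e. a fixed finite number determined by the index. I would compute this total degree, then account for the forced zeros at the cusps (reading off the orders of vanishing of $Z_{r,s}^{(2)}$ at each cusp from the $q$-expansion of $\zeta,\wp,\wp'$ and of $Z_{r,s}$ itself). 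Subtracting the cusp contributions from the total valence leaves the number of zeros in the interior of the modular curve; distributing these over the $\Gamma(N)/\Gamma_0(2)$-cosets that tile $F_0$ should leave \emph{at most one} zero in $F_0$. This gives uniqueness. For existence on the three triangles, I would combine the count with an argument that a zero cannot escape to the boundary $\partial F_0$ (using the evaluation of $Z_{r,s}^{(2)}$ along the arcs $\operatorname{Re}\tau\in\{0,1\}$ and $|\tau-\tfrac12|=\tfrac12$, where symmetry forces $Z_{r,s}^{(2)}$ to be real or purely imaginary and hence sign-definite under the triangle hypotheses), concluding $\tau\in\mathring F_0$.

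\textbf{Step 3: from torsion points to all real $(r,s)$, and the main obstacle.}
Once the theorem holds on the dense set $\bigcup_N Q_N$, I would extend it to arbitrary $(r,s)\in\mathbb{R}^2\setminus\tfrac12\mathbb{Z}^2$ by a continuity and degree-theory argument: $Z_{r,s}^{(2)}(\tau)$ depends real-analytically on $(r,s)$, so the winding number of $\tau\mapsto Z_{r,s}^{(2)}(\tau)$ along $\partial F_0$ (suitably truncated near the cusp $i\infty$) is locally constant in $(r,s)$ and counts interior zeros; this winding number is computed on torsion points and then propagated across each open triangle, where no zero crosses $\partial F_0$. The main obstacle is precisely this boundary control—showing that as $(r,s)$ moves within $\triangle_1\cup\triangle_2\cup\triangle_3$ the unique zero never reaches $\partial F_0$ (so the count stays equal to one), and conversely that on $\triangle_0$ the zero that Theorem A produces for $Z_{r,s}$ does \emph{not} persist as a zero of the cubic combination $Z_{r,s}^{(2)}$. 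Resolving this requires a careful analysis of $Z_{r,s}^{(2)}$ restricted to the three boundary arcs, together with the degeneration limits at the corners $(\tfrac12,0),(1,0),(0,0)$ where the triangles meet the excluded set $\tfrac12\mathbb{Z}^2$; I expect the delicate case to be the triangle $\triangle_3$ adjacent to the origin, where $Z_{r,s}^{(2)}\equiv\infty$ at $(0,0)$ forces a blow-up analysis analogous to the $\lim_{s\to0}\tfrac1s Z_{-Cs,s}^{(2)}$ rescaling used later in the paper.
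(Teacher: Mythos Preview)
Your skeleton---valence counting on torsion points followed by continuity in $(r,s)$---is broadly correct and matches the paper's architecture. However, there is one genuine gap and one structural difference worth flagging.

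\textbf{The gap: boundary nonvanishing.} Your Step 2 proposes to show $Z_{r,s}^{(2)}(\tau)\neq 0$ on $\partial F_0\cap\mathbb H$ by arguing that along the arcs $\operatorname{Re}\tau\in\{0,1\}$ and $|\tau-\tfrac12|=\tfrac12$ the function is ``real or purely imaginary and hence sign-definite.'' The paper explicitly says this nonvanishing ``does not seem [obtainable] directly from the expression (\ref{new3}) of $Z_{r,s}^{(2)}(\tau)$,'' and instead invokes Theorem~B: the PDE result that the mean field equation \eqref{mfe} has no solutions when $\tau\in\{e^{\pi i/3}\}\cup i\mathbb R^+$, combined with the equivalence in Theorem~B-(1), gives $Z_{r,s}^{(2)}(\tau)\neq 0$ on $i\mathbb R^+$ for \emph{every} $(r,s)\in\mathbb R^2\setminus\tfrac12\mathbb Z^2$; the other two boundary arcs follow by transporting via $\gamma=\begin{pmatrix}1&-1\\0&1\end{pmatrix}$ and $\begin{pmatrix}1&0\\-1&1\end{pmatrix}$. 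Without this PDE input, your winding-number argument in Step 3 has no anchor: you cannot rule out zeros crossing $\partial F_0$ as $(r,s)$ varies. This is the only place where PDE enters, but it is essential.

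\textbf{Structural differences.} Rather than working with individual $\Gamma(N)$-forms, the paper forms the $SL(2,\mathbb Z)$-invariant product $M_N(\tau)=\prod_{(r,s)\in Q_N}Z_{r,s}^{(2)}(\tau)$ and applies the classical valence formula (Theorem~C) directly. The case $N=3$ already shows $Z_{1/3,1/3}^{(2)}$ is zero-free (so $\triangle_0$ is empty of zeros by constancy of the count). For $N=6$, the valence formula leaves exactly one zero (with multiplicity two, from the $(r,s)\leftrightarrow(-r,-s)$ pairing) in the $SL(2,\mathbb Z)$-fundamental domain $F$; the paper then exploits that the three test points $(\tfrac16,\tfrac16),(\tfrac23,\tfrac16),(\tfrac56,\tfrac13)\in\triangle_3,\triangle_2,\triangle_1$ are \emph{cyclically permuted} by $\gamma_1,\gamma_2$ (where $F_0=F\cup\gamma_1(F)\cup\gamma_2(F)$) to conclude each has exactly one zero in $F_0$. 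This combinatorial rigidity replaces your vaguer ``distributing over cosets.'' Finally, nonvanishing on the edges $\cup_k\partial\triangle_k$ (which you do not address separately) is handled by its own counting: for a large prime $N$, the valence of $M_{2N}$ exactly matches $2|Q_{2N}\cap\triangle_3|$, so an extra zero coming from an edge point would overshoot the total. Your Lam\'e-monodromy Step 1 is not used at all.
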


Unlike $Z_{r,s}(\tau)$, Theorem \ref{thm20} shows an interesting phenomena for $Z_{r,s}^{(2)}(\tau)$. For example, $Z_{r,s}^{(2)}(\tau)$ has zeros in $F_0$ for $(r,s)\in\triangle_{1}\cup\triangle_{2}$, but it has no zeros in $F_0$ for $(r,s)\in \partial\triangle_{1}\cap\partial\triangle_{2}$.

The rest of this section is to prove Theorem \ref{thm20}. The reason why we choose the fundamental domain $F_0$ will be clear from the proof, particularly Lemma \ref{neq-zero}.
The basic strategy is similar to that of proving Theorem A in \cite{CKLW}. However, the argument is more involved and new techniques are needed. For example, for the same assertion of the pre-modular forms having no zero in $F_0$ for $(r,s)\in\cup_{k=0}^{3}\partial\triangle_{k}$, it is a trivial consequence of the same assertion for $(r,s)\in \triangle_{1}\cup \triangle
_{2}\cup \triangle_{3}$ in Theorem A; but obviously, this is not the case in Theorem \ref{thm20}.

First we need the following important results of PDE aspect.
\medskip

\noindent{\bf Theorem B.} \cite{LW2,CKL3}
\begin{itemize}
\item[(1)] \cite{LW2} {\it The mean field equation
\begin{equation}\label{mfe}\Delta u+e^u=16\pi\delta_0\quad\text{on}\; E_{\tau}:=\mathbb{C}/(\mathbb{Z}+\mathbb{Z}\tau)\end{equation}
has solutions if and only if there exists $(r,s)\in \mathbb{R}^{2}\setminus\frac{1}{2}\mathbb{Z}^{2}$ such that $\tau$ is a zero of $Z_{r,s}^{(2)}(\cdot)$}.

\item[(2)] \cite{CKL3} {\it If $\tau\in \{e^{\pi i/3}\}\cup i\mathbb{R}^+ $, then equation (\ref{mfe}) has no solutions}.
\end{itemize}

\begin{remark} In \cite{CLW,LW2}, Chai, Wang and the second author studied the following singular Liouville equation
\begin{equation}\label{mfe1}\Delta u+e^u=8n\pi\delta_0\quad\text{on}\; E_{\tau},\end{equation}
where $n\in\mathbb{N}$.
The solvability of (\ref{mfe1}) depends essentially on the moduli $\tau$ of the flat torus $E_{\tau}$ and is intricate from the PDE point of view. To settle this challenging problem, they studied it from the viewpoint of algebraic geometry. They
developed a theory to connect this PDE problem with the Lam\'{e} equation (\ref{lame}) and pre-modular forms. In particular, Wang and the second author \cite{LW2} proved the existence of a pre-modular form $Z_{r,s}^{(n)}(\cdot)$ of weight $\frac{n(n+1)}{2}$ such that (\ref{mfe1}) on $E_{\tau}$ has solutions if and only if $Z_{r,s}^{(n)}(\tau)=0$ for some $(r,s)\in \mathbb{R}^{2}\setminus\frac{1}{2}\mathbb{Z}^{2}$. Theorem B-(1) is a special case of this statement for $n=2$. Theorem B-(2) is a purely PDE result. We will see that Theorem B plays a crucial role in the proof of Lemma \ref{neq-zero} and hence Theorem \ref{thm20}. This is the only place related to the PDE result.
\end{remark}

\begin{lemma}
\label{neq-zero}Let $(r,s)\in \lbrack0,1]\times \lbrack0,\frac{1}{2}%
]\backslash \frac{1}{2}\mathbb{Z}^{2}$. Then $Z_{r,s}^{(2)}(\tau)\not =0$ for
any $\tau \in \{e^{\pi i/3}\} \cup(\partial F_{0}\cap \mathbb{H})$.
\end{lemma}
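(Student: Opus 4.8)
The plan is to establish the non-vanishing of $Z_{r,s}^{(2)}(\tau)$ on the boundary $\partial F_0 \cap \mathbb{H}$ together with the single interior point $e^{\pi i/3}$ by combining the PDE input of Theorem B with the symmetry properties (i)--(ii) of the pre-modular forms. The key structural observation is that the boundary $\partial F_0$ consists of pieces that are \emph{special} under the $\Gamma_0(2)$-action or carry extra reflection symmetry, and each such piece can be mapped — via a suitable $\gamma \in SL(2,\mathbb{Z})$, using property (ii) — either onto the imaginary axis $i\mathbb{R}^+$ or onto the $SL(2,\mathbb{Z})$-orbit of $e^{\pi i/3}$, where Theorem B-(2) forbids solvability of the mean field equation \eqref{mfe} and hence, by Theorem B-(1), forbids any $(r,s) \in \mathbb{R}^2 \setminus \frac{1}{2}\mathbb{Z}^2$ from being a zero of $Z_{r,s}^{(2)}(\cdot)$ at those $\tau$.

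First I would decompose $\partial F_0 \cap \mathbb{H}$ into its natural constituents: the two vertical segments $\operatorname{Re}\tau = 0$ and $\operatorname{Re}\tau = 1$, and the lower circular arc $|\tau - \tfrac{1}{2}| = \tfrac{1}{2}$. The left edge $\operatorname{Re}\tau = 0$ is exactly $i\mathbb{R}^+$, so Theorem B-(2) applies directly: for such $\tau$, \eqref{mfe} has no solution, whence by the equivalence in Theorem B-(1) no pre-modular form $Z_{r,s}^{(2)}$ with $(r,s) \notin \frac{1}{2}\mathbb{Z}^2$ can vanish there. The right edge $\operatorname{Re}\tau = 1$ is the translate of $i\mathbb{R}^+$ by $\tau \mapsto \tau - 1$, which is the action of $\gamma = \left(\begin{smallmatrix} 1 & -1 \\ 0 & 1 \end{smallmatrix}\right) \in SL(2,\mathbb{Z})$; by property (ii), $Z^{(2)}_{r',s'}(\tau) = Z^{(2)}_{r,s}(\tau)$ up to the nonzero factor $(c\tau+d)^3 = 1$, so a zero on the right edge would produce a zero on $i\mathbb{R}^+$, again excluded. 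For the circular arc, I would use the involution $\tau \mapsto \tfrac{\tau}{-\tau+1}$, i.e. $\gamma = \left(\begin{smallmatrix} 1 & 0 \\ -1 & 1 \end{smallmatrix}\right) \in SL(2,\mathbb{Z})$ (or $S = \left(\begin{smallmatrix}0 & -1 \\ 1 & 0\end{smallmatrix}\right)$ after a translation): one checks that this maps the arc $|\tau - \tfrac12| = \tfrac12$ onto the vertical line $\operatorname{Re} = \tfrac12$ or onto $\operatorname{Re} = 0$ or $1$, reducing once more to the excluded vertical axis. Since the multiplier $(c\tau+d)^3$ never vanishes on $\mathbb{H}$, vanishing is preserved and reflected back, so $Z^{(2)}_{r,s}$ cannot vanish on the arc either.

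For the isolated interior point $\tau = e^{\pi i/3}$, Theorem B-(2) names it explicitly, so the same one-line deduction through Theorem B applies: \eqref{mfe} is unsolvable at $e^{\pi i/3}$, hence no admissible $(r,s)$ gives $Z^{(2)}_{r,s}(e^{\pi i/3}) = 0$. One subtlety I would be careful about is the hypothesis in Theorem B: the equivalence in part (1) quantifies over \emph{all} $(r,s) \in \mathbb{R}^2 \setminus \frac{1}{2}\mathbb{Z}^2$, while in Lemma~\ref{neq-zero} the pair $(r,s)$ is fixed in $[0,1] \times [0,\tfrac12] \setminus \frac{1}{2}\mathbb{Z}^2$; but this is no obstacle, since the implication we need runs in the easy direction — a fixed zero at such $\tau$ would in particular witness solvability of \eqref{mfe}, which Theorem B-(2) rules out. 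I should also confirm that each $\gamma$ used genuinely lies in $SL(2,\mathbb{Z})$ (not merely $\Gamma_0(2)$), so that property (ii) applies with the transformed parameters $(s',r') = (s,r)\gamma^{-1}$; since $(r',s') \notin \frac{1}{2}\mathbb{Z}^2$ is automatic from $(r,s) \notin \frac{1}{2}\mathbb{Z}^2$ and the integrality of $\gamma^{-1}$, the transformed form is again a legitimate holomorphic pre-modular form and the argument closes.

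\emph{The main obstacle} I anticipate is not the vertical edges but the precise bookkeeping on the circular arc: verifying that the chosen Möbius map sends the whole arc (and in particular its interior endpoints, excluding the corners $\tau = 0, 1$ which lie on $\partial\mathbb{H}$) into the union $\{e^{\pi i/3}\} \cup i\mathbb{R}^+$ up to the already-excluded vertical lines, and checking that no point of the arc escapes to a $\tau$ where Theorem B-(2) says nothing. Concretely I would parametrize the arc as $\tau = \tfrac12 + \tfrac12 e^{i\theta}$, $\theta \in (0,\pi)$ with $\tau = e^{\pi i/3}$ corresponding to the apex, apply the candidate transformation, and confirm the image lands on a line already handled; the apex $e^{\pi i/3}$ is exactly the fixed point that must be treated separately, which is why it appears as a distinguished point in the statement of the lemma. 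This reduction-by-symmetry to the single PDE nonexistence result of Theorem B-(2) is what makes the argument short once the geometry of $\partial F_0$ under $SL(2,\mathbb{Z})$ is correctly identified.
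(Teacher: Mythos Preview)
Your approach is essentially identical to the paper's: reduce every boundary piece to $i\mathbb{R}^+$ via an explicit $\gamma\in SL(2,\mathbb{Z})$ and then invoke Theorem~B. Two small corrections are worth making. First, your handling of the arc is vaguer than it needs to be: the map $\gamma=\left(\begin{smallmatrix}1&0\\-1&1\end{smallmatrix}\right)$ you named sends $\tau$ on $|\tau-\tfrac12|=\tfrac12$ to $\tfrac{\tau}{1-\tau}$, and a one-line computation (write $\tau=\tfrac12+\tfrac12e^{i\theta}$) gives $\tfrac{\tau}{1-\tau}=i\cot(\theta/2)\in i\mathbb{R}^+$ exactly---so the image is the imaginary axis, not the line $\operatorname{Re}=\tfrac12$; this matters because Theorem~B-(2) says nothing about $\operatorname{Re}\tau=\tfrac12$, and listing that as a possible target would leave a genuine hole. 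Second, $e^{\pi i/3}=\tfrac12+i\tfrac{\sqrt3}{2}$ is \emph{not} the apex of the arc $|\tau-\tfrac12|=\tfrac12$ (that apex is $\tfrac12+\tfrac{i}{2}$); it lies in $\mathring{F}_0$, which is precisely why the statement singles it out separately and why you must (and do) invoke Theorem~B-(2) for it directly rather than through any boundary reduction.
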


\begin{proof}
It does not seem that this assertion could be obtained directly from the expression (\ref{new3})
of $Z_{r,s}^{(2)}(\tau)$. Indeed, this lemma is a consequence of the result of \emph{nonlinear
PDEs} (i.e. Theorem B).

Given $\tau \in \{e^{\pi i/3}\} \cup(\partial F_{0}\cap \mathbb{H})$. If $\tau \in
\{ e^{\pi i/3} \} \cup i\mathbb{R}^{+}$, then Theorem B
implies $Z_{r,s}^{(2)}(\tau)\not =0$ for any $(r,s)\in \mathbb{R}^{2}%
\backslash \frac{1}{2}\mathbb{Z}^{2}$.
If $\tau \in i\mathbb{R}^{+}+1$, then
by applying $\gamma=%
\begin{pmatrix}
1 & -1\\
0 & 1
\end{pmatrix}
$ in property (ii), we have that $\tau-1\in i\mathbb{R}^{+}$ and
\[
Z_{r,s}^{(2)}(\tau)=Z_{r+s,s}^{(2)}(\tau-1)\not =0\text{ for any }%
(r,s)\in \mathbb{R}^{2}\backslash \tfrac{1}{2}\mathbb{Z}^{2}.
\]
If $|\tau-\frac{1}{2}|=\frac{1}{2}$, then again by applying $\gamma=%
\begin{pmatrix}
1 & 0\\
-1 & 1
\end{pmatrix}
$ in property (ii)  we see that $\frac{\tau}{1-\tau}\in i\mathbb{R}^{+}$ and
\[
(1-\tau)^{3}Z_{r,s}^{(2)}(\tau)=Z_{r,r+s}^{(2)}(\tfrac{\tau}{1-\tau}%
)\not =0\text{ for any }(r,s)\in \mathbb{R}^{2}{\small \backslash}\tfrac{1}%
{2}\mathbb{Z}^{2}.
\]
This completes the proof.
\end{proof}

Recalling $Q_N$ in (\ref{q-n}), we define%
\begin{equation}
M_{N}(\tau):=\prod_{(r,s)\in Q_{N}}Z_{r,s}^{(2)}(\tau).\label{f-N}%
\end{equation}
By properties (i)-(ii), it is easy to see that $M_{N}(\tau)$ is a \emph{modular form} with respect to
$SL(2,\mathbb{Z})$ of weight $3|Q_{N}|$ (i.e. for any $\gamma\in SL(2,\mathbb{Z})$, when $(r,s)$ runs over all elements of $Q_{N}$, then so does $(r',s')$ after modulo $\mathbb{Z}^2$), where $|Q_{N}|=\#Q_{N}$.
To apply the theory of modular forms, we recall the following classical formula. See e.g. \cite{Diamond-Shurman,Serre} for the proof.\medskip

\noindent \textbf{Theorem C.} \textit{Let $f(\tau)$ be a nonzero
modular form with respect to $SL(2,\mathbb{Z})$\ of weight
$k$. Then}
\begin{equation}
\sum_{\tau \in \mathbb{H}\backslash \{i,\rho \}}\nu_{\tau}(f)+\nu_{\infty
}(f)+\frac{1}{2}\nu_{i}(f)+\frac{1}{3}\nu_{\rho}(f)=\frac{k}{12},
\label{modular}%
\end{equation}
\textit{where $\rho:=e^{\pi i/3}$, $\nu_{\tau}(f)$ denotes
the zero order of $f$ at $\tau$ and the summation over
$\tau$ is performed modulo $SL(2,\mathbb{Z})$
equivalence.}\medskip

We note for each $(r,s)\in Q_{N}$, there exists a unique $(\tilde{r},\tilde{s})\in Q_N$ such that $(\tilde{r},\tilde{s})\equiv(-r,-s)$ mod $\mathbb{Z}^{2}$. Then property (i) of
$Z_{r,s}^{(2)}(\tau)$ gives $Z_{\tilde{r},\tilde{s}}^{(2)}(\tau
)=-Z_{r,s}^{(2)}(\tau)$, which implies that%
\begin{equation}
\nu_{\tau}(M_{N})\in2\mathbb{N}\cup \{0\} \text{ \ for any \ }\tau \in
\mathbb{H}.\label{mo-2}%
\end{equation}

To apply Theorem C, we need the asymptotics of
$Z_{r,s}^{(2)}(\tau)$ as $\operatorname{Im}\tau \rightarrow+\infty$.

\begin{lemma}
\label{infinity-behavior copy(1)}Let $(r,s)\in \lbrack0,1)\times \lbrack
0,1)\backslash \frac{1}{2}\mathbb{Z}^{2}$ and $q=e^{2\pi i\tau}$ with $\tau\in F_0$. Then the asymptotics of $Z_{r,s}^{(2)}(\tau)$ at the three
cusps $\tau$ $=0,1,\infty$ are as follows:
\end{lemma}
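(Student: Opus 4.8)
The plan is to express $Z_{r,s}^{(2)}(\tau)$ through the classical Fourier expansions of the Weierstrass functions in $q=e^{2\pi i\tau}$ and the auxiliary variable $x=e^{2\pi i z}$ with $z=r+s\tau$, and then to reduce the two finite cusps to $\infty$. First I would record the standard expansions: $\eta_1(\tau)=\tfrac{\pi^2}{3}(1-24q+O(q^2))$ together with the Eisenstein--Kronecker $q$-series for $\zeta(z|\tau)$, $\wp(z|\tau)$ and $\wp'(z|\tau)$. The useful point is that the linear term $\eta_1 z$ of $\zeta(z|\tau)$ cancels the term $(r+s\tau)\eta_1=z\eta_1$ in $Z_{r,s}(\tau)=\zeta(z|\tau)-(r+s\tau)\eta_1(\tau)+2\pi is$, leaving $Z_{r,s}(\tau)=\pi i\frac{x+1}{x-1}+2\pi is+(\text{a convergent }q\text{-series in }x,q)$, which I then feed into the cubic $Z_{r,s}^{(2)}=Z_{r,s}^{3}-3\wp\,Z_{r,s}-\wp'$. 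For the cusps $0$ and $1$ I would invoke property (ii): with $\gamma=\left(\begin{smallmatrix}0&-1\\1&0\end{smallmatrix}\right)$ (sending $0\mapsto\infty$, pair $(s',r')=(s,r)\gamma^{-1}=(-r,s)$) and $\gamma=\left(\begin{smallmatrix}0&-1\\1&-1\end{smallmatrix}\right)$ (sending $1\mapsto\infty$, pair $(s,-r-s)$), one has $Z_{r,s}^{(2)}(\tau)=(c\tau+d)^{-3}Z_{r',s'}^{(2)}(\gamma\tau)$ with $\gamma\tau\to\infty$. Thus the whole lemma follows from a single $\infty$-computation applied to three pairs, each reduced into $[0,1)^2$ via property (i).

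For the cusp $\infty$ I would expand directly, noting that $|x|=|q|^{s}$, so the value of $s$ controls the limit. For $0<s<1$ one has $x\to0$ as $\operatorname{Im}\tau\to+\infty$, whence $Z_{r,s}\to\pi i(2s-1)$, $\wp(z|\tau)\to-\tfrac{\pi^2}{3}$ and $\wp'(z|\tau)\to0$; substituting into the cubic gives
\[
Z_{r,s}^{(2)}(\tau)\longrightarrow \pi^3 i(2s-1)\bigl[1-(2s-1)^2\bigr]=4\pi^3 i\,s(1-s)(2s-1),
\]
a nonzero constant since $s\in(0,1)\setminus\{\tfrac12\}$. This settles all pairs whose relevant second coordinate lies strictly between $0$ and $1$, and in particular shows $Z_{r,s}^{(2)}$ does not vanish at the corresponding cusp (vanishing order $0$).

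The genuinely delicate case, which I expect to be the main obstacle, is the degenerate direction $s=0$ (and the pairs it becomes after the transformations to the cusps $0$ and $1$). When $s=0$ the point $z=r$ is real, $x=e^{2\pi ir}$ stays bounded away from $0$, and a direct substitution shows the constant term of $Z_{r,0}^{(2)}$ vanishes identically: writing $c=\cot\pi r$ one gets $Z_{r,0}\to\pi c$, $\wp(r)\to\pi^2(\tfrac23+c^2)$, $\wp'(r)\to-2\pi^3c(1+c^2)$, and the cubic collapses to $0$. Hence the true asymptotic is governed by the first nonvanishing power of $q$, so I would expand $\zeta,\wp,\wp',\eta_1$ one order further and verify that the coefficient of $q$ is a nonzero trigonometric function of $r$ for every $r\in(0,1)\setminus\{\tfrac12\}$. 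Performing this cancellation cleanly, and checking the analogous next-order coefficients at the cusps $0$ and $1$, is where the real work lies; the resulting nonvanishing is exactly what is needed afterwards to pin down the cusp contributions $\nu_\infty,\nu_0,\nu_1$ in the valence formula (Theorem C). Finally I would record that the parity relation $Z_{\tilde r,\tilde s}^{(2)}=-Z_{r,s}^{(2)}$ from property (i) is consistent with all these leading terms, which serves as a useful arithmetic check.
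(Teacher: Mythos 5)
Your proposal is correct and takes essentially the same route as the paper: a direct $q$-expansion computation at the cusp $\infty$ (your leading constants all check out, including the exact cancellation of the constant term at $s=0$ via $c=\cot\pi r$, which forces the next-order term $-48\pi^3\sin(2\pi r)q$, and similarly the $q^{1/2}$ term at $s=\tfrac12$), followed by reduction of the cusps $0$ and $1$ to $\infty$ through property (ii) --- indeed your one-step map $\tau\mapsto\frac{-1}{\tau-1}$ for the cusp $1$ is precisely the paper's two-fold composition of $\tau\mapsto\frac{\tau-1}{\tau}$, and your transformed pairs $(s,-r)$ and $(s,-(r+s))$ match the paper's after normalizing with property (i). The only part you leave unfinished --- the explicit next-order coefficients at $s=0,\tfrac12$ --- is exactly the calculation the paper itself omits as straightforward, citing \cite{Dahmen,LW2}.
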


\begin{itemize}
\item[(a)] As $F_0\ni \tau \rightarrow\infty$,
\[
Z_{r,s}^{(2)}(\tau)=4\pi^{3}is(1-s)(2s-1)+o(1)\;\text{ if }\;s\in \left(
0,\tfrac{1}{2}\right)  \cup \left(  \tfrac{1}{2},1\right)  ,
\]%
\[
Z_{r,s}^{(2)}(\tau)=-48\pi^{3}\sin(2\pi r)q+O(q^{2})\;\text{ if }\; s=0,
\]%
\[
Z_{r,s}^{(2)}(\tau)=-12\pi^{3}\sin(2\pi r)q^{1/2}+O(q)\;\text{ \ if \ }\; s=1/2.
\]

\item[(b)] As $F_0\ni \tau \rightarrow0$,
\[
\lim_{\tau\to 0}Z_{r,s}^{(2)}(\tau)= \infty\;\text{ if }\;%
r\in \left(  0,\tfrac{1}{2}\right)  \cup \left(  \tfrac{1}{2},1\right)  .
\]

\item[(c)] As $F_0\ni \tau \rightarrow1$,
\[
\lim_{\tau\to 1}Z_{r,s}^{(2)}(\tau) =\infty\;\text{ if }\;\left(
r+s\right)  \in \left(  0,\tfrac{1}{2}\right)  \cup \left(  \tfrac{1}{2},1\right)
\cup \left(  1,\tfrac{3}{2}\right)  .
\]

\end{itemize}

\begin{proof}
By using the $q$-expansions of $\wp(z|\tau)$ and $Z_{r,s}(\tau)$
(see (\ref{q-e1})-(\ref{zz}) in Section \ref{tauc}), the asymptotics of $Z_{r,s}^{(2)}(\tau)$ as
$\tau\to\infty$ can be easily calculated. Because the calculation is straightforward
and is already done in \cite{Dahmen,LW2}, we omit the details for (a) here.

The asymptotics of $Z_{r,s}^{(2)}(\tau)$ at the cusp $0$ can be obtained
by using property (ii) and the assertion (a). Letting $\gamma=%
\begin{pmatrix}
1 & -1\\
1 & 0
\end{pmatrix}
$ leads to
\[
Z_{r+s,-r}^{(2)}(\tfrac{\tau-1}{\tau})=\tau^{3}Z_{r,s}^{(2)}(\tau).
\]
When $\tau \in F_{0}$ and $\tau \rightarrow0$, we have $\tfrac{\tau-1}{\tau}\in
F_{0}$ and $\tfrac{\tau-1}{\tau}\rightarrow \infty$. Then applying (a) we obtain that as $F_0\ni\tau \rightarrow0$,%
\begin{align}
Z_{r,s}^{(2)}(\tau) &  =\frac{-1}{\tau^{3}}Z_{-(r+s),r}^{(2)}(\tfrac{\tau
-1}{\tau})\nonumber \\
&  =\frac{-1}{\tau^{3}}\left[  4\pi^{3}ir(1-r)(2r-1)+o(1)\right]  \text{ \ if
\ }r\in(0,\tfrac{1}{2})\cup(\tfrac{1}{2},1).\label{ayp-8}%
\end{align}
This proves (b).

Similarly, when $\tau\in F_0$ and $\tau \rightarrow1$, we have $\tfrac{\tau-1}{\tau}\in F_{0}$
and $\tfrac{\tau-1}{\tau}\rightarrow0$. Applying property (ii) and (\ref{ayp-8})
we obtain that as $F_0\ni\tau \rightarrow1$,%
\begin{align*}
Z_{r,s}^{(2)}(\tau) &  =\frac{1}{\tau^{3}}Z_{r+s,-r}^{(2)}(\tfrac{\tau-1}%
{\tau})\\
&  =\frac{-1}{(\tau-1)^{3}}\left[  4\pi^{3}i(r+s)(1-r-s)(2r+2s-1)+o(1)\right]
\end{align*}
for $r+s\in(0,\tfrac{1}{2})\cup(\tfrac{1}{2},1)$. The remaining case $r+s\in(1,\tfrac{3}{2})$ follows from
$Z_{r,s}^{(2)}(\tau)  =Z_{r-1,s}^{(2)}(\tau)$. This proves (c).
\end{proof}

Lemma \ref{infinity-behavior copy(1)}-(a) implies
\begin{equation}
\text{the vanishing order of }Z_{r,s}^{(2)}(\tau)\text{ at }\infty \text{ is
}\left \{
\begin{array}
[c]{l}%
0\text{ \ if \ }s\not =0,1/2\\
1\text{ \ if \ }s=0\\
\frac{1}{2}\text{ \ if \ }s=1/2
\end{array}
\right.  .\label{vanish order}%
\end{equation}
Recall $\triangle_{k}$, $k=0,1,2,3$, defined in (\ref{rectangle}).

\begin{lemma}
\label{yl-1}Fix $k\in \{0,1,2,3\}$. Then the number of zeros of $Z_{r,s}%
^{(2)}(\tau)$ in $F_{0}$ is a constant for $(r,s)\in \triangle_{k}$.
\end{lemma}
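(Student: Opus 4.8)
The plan is to count zeros with the argument principle and to show that this count, as a function of $(r,s)$, is integer-valued and locally constant on the connected open triangle $\triangle_{k}$, hence globally constant there. Fix $(r_{0},s_{0})\in\triangle_{k}$ and a small neighborhood $U\subset\triangle_{k}$ of it. Since $(r,s)\notin\frac{1}{2}\mathbb{Z}^{2}$, each $Z_{r,s}^{(2)}(\tau)$ is holomorphic in $\tau\in\mathbb{H}$ (no poles) and jointly continuous in $(r,s,\tau)$. For a bounded region $D$ whose boundary meets no zero, the number of zeros of $Z_{r,s}^{(2)}(\cdot)$ in $D$, counted with multiplicity, is
\[
N(r,s)=\frac{1}{2\pi i}\oint_{\partial D}\frac{\partial_{\tau}Z_{r,s}^{(2)}(\tau)}{Z_{r,s}^{(2)}(\tau)}\,d\tau,
\]
and once the contour $\partial D$ is fixed and zero-free, $N$ is continuous in $(r,s)$; being $\mathbb{Z}$-valued it is then locally constant.

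The real issue is that $F_{0}$ is non-compact: it reaches the cusp $\infty$ and pinches at the two cusps $0$ and $1$. I would therefore replace $F_{0}$ by a truncated region $D=\{\tau\in F_{0}:\operatorname{Im}\tau<T\}$ with small neighborhoods of $0$ and $1$ removed, and argue that for $T$ large and these neighborhoods small, $D$ captures \emph{all} zeros of $Z_{r,s}^{(2)}$ in $F_{0}$ while $\partial D$ stays zero-free, \emph{uniformly} for $(r,s)\in U$. On the finite part $\partial F_{0}\cap\mathbb{H}$ of the boundary, non-vanishing is exactly Lemma \ref{neq-zero}. Near the three cusps I would invoke Lemma \ref{infinity-behavior copy(1)}: for every $(r,s)\in\triangle_{k}$ one checks directly that $s\in(0,\tfrac{1}{2})$, that $r\in(0,\tfrac{1}{2})\cup(\tfrac{1}{2},1)$, and that $r+s$ lies in $(0,\tfrac{1}{2})\cup(\tfrac{1}{2},1)\cup(1,\tfrac{3}{2})$, so that $Z_{r,s}^{(2)}(\tau)$ tends to a nonzero constant as $\tau\to\infty$ and blows up as $\tau\to0$ and as $\tau\to1$. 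Consequently no zeros sit near the cusps, and all zeros lie in the finite interior.

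Because the cusp asymptotics are given by formulas that depend continuously on $(r,s)$ and hold locally uniformly, after shrinking $U$ one can fix a single height $T$ and fixed cusp neighborhoods so that $Z_{r,s}^{(2)}$ has no zero on the segment $\operatorname{Im}\tau=T$ nor inside the removed neighborhoods, for \emph{all} $(r,s)\in U$. With this $(r,s)$-independent contour $\partial D$, the integrand $\partial_{\tau}Z_{r,s}^{(2)}/Z_{r,s}^{(2)}$ is jointly continuous on $\partial D\times U$, so $N(r,s)$ is continuous and hence constant on $U$. As $(r_{0},s_{0})\in\triangle_{k}$ was arbitrary and $\triangle_{k}$ is connected, $N$ is constant on all of $\triangle_{k}$, which is the assertion.

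I expect the main obstacle to be precisely the uniform control near the cusps, i.e.\ upgrading the pointwise limits of Lemma \ref{infinity-behavior copy(1)} to local uniformity in $(r,s)$ so that the truncation height $T$ and the cusp neighborhoods can be chosen independently of $(r,s)$ on $U$. This uniformity is what forbids zeros from silently entering or escaping through $\infty$, $0$, or $1$ as the parameters move; by contrast, the non-vanishing on the finite boundary $\partial F_{0}\cap\mathbb{H}$ is already supplied by Lemma \ref{neq-zero}, and the verification of the three cusp conditions for each $\triangle_{k}$ is routine bookkeeping.
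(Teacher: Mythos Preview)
Your proposal is correct and follows essentially the same approach as the paper: both use Lemma~\ref{neq-zero} for non-vanishing on $\partial F_{0}\cap\mathbb{H}$, Lemma~\ref{infinity-behavior copy(1)} to control the three cusps, and then the argument principle together with connectedness of $\triangle_{k}$. The paper's proof is terse (``it is easy to apply the argument principle''), whereas you have spelled out the truncation and the local-uniformity issue near the cusps that the paper leaves implicit; your more careful discussion is a faithful unpacking of the same argument.
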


\begin{proof}
Since $(r,s)\in \triangle_{k}$, we have $r,s,r+s\not \in \{0,\frac{1}%
{2},1,\frac{3}{2}\}$, so Lemma \ref{infinity-behavior copy(1)} (a)-(c) imply
that%
\[
Z_{r,s}^{(2)}(\tau)\not \rightarrow 0\;\text{ as }\;F_{0}\ni \tau \rightarrow
\infty,0,1
\]
respectively. Together with Lemma \ref{neq-zero} that $Z_{r,s}^{(2)}%
(\tau)\not =0$ on $\partial F_{0}\cap \mathbb{H}$, it is easy to apply the
argument principle to conclude that the number of zeros of $Z_{r,s}^{(2)}%
(\tau)$ in $F_{0}$ is a constant for $(r,s)\in \triangle_{k}$.
\end{proof}

\begin{lemma}
\label{lem-1}Let $(r,s)\in Q_{3}$. Then $Z_{r,s}^{(2)}(\tau)\not =0$ for any
$\tau \in \mathbb{H}$.
\end{lemma}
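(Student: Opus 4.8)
The plan is to apply the valence formula (Theorem C) to the modular form $M_3(\tau)=\prod_{(r,s)\in Q_3}Z_{r,s}^{(2)}(\tau)$ from (\ref{f-N}) and to show that its total zero count in $\mathbb{H}$ is forced to be zero. First I would pin down the weight of $M_3$. The set $Q_3$ consists of the pairs $(\tfrac{k_1}{3},\tfrac{k_2}{3})$ with $0\le k_1,k_2\le 2$ and $\gcd(k_1,k_2,3)=1$; the only excluded pair is $(0,0)$, so $|Q_3|=8$ and $M_3$ is a modular form of weight $3|Q_3|=24$ with respect to $SL(2,\mathbb{Z})$. Hence the right-hand side of (\ref{modular}) equals $24/12=2$, giving a total ``budget'' of $2$ for the weighted zero count.

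Next I would evaluate the three special contributions in (\ref{modular}). Since $i\in\partial F_0\cap\mathbb{H}$ and $\rho=e^{\pi i/3}$, Lemma \ref{neq-zero} gives $Z_{r,s}^{(2)}(i)\neq0$ and $Z_{r,s}^{(2)}(\rho)\neq0$ for every $(r,s)\in Q_3$, whence $\nu_i(M_3)=\nu_\rho(M_3)=0$. The key quantitative input is the vanishing order at the cusp $\infty$: by (\ref{vanish order}) a factor $Z_{r,s}^{(2)}$ vanishes at $\infty$ only when $s\in\{0,\tfrac12\}$. For the points of $Q_3$ one has $s=\tfrac{k_2}{3}\in\{0,\tfrac13,\tfrac23\}$, so $s=\tfrac12$ never occurs, while $s=0$ occurs exactly for $(\tfrac13,0)$ and $(\tfrac23,0)$, each contributing vanishing order $1$. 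Therefore $\nu_\infty(M_3)=1+1=2$.

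Substituting these values into (\ref{modular}) yields
\[
\sum_{\tau\in\mathbb{H}\backslash\{i,\rho\}}\nu_\tau(M_3)+2+\tfrac12\cdot 0+\tfrac13\cdot 0=2,
\]
so the interior sum must vanish. As each $\nu_\tau(M_3)\ge 0$, every term is zero, and together with $\nu_i(M_3)=\nu_\rho(M_3)=0$ this forces $M_3(\tau)\neq 0$ for all $\tau\in\mathbb{H}$. Since $M_3$ is the product of the $Z_{r,s}^{(2)}$ over $(r,s)\in Q_3$, each factor is nonvanishing on $\mathbb{H}$, which is the assertion.

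I expect the crux to be the exact numerical balancing: the weight $24$ produces a budget of exactly $2$, and this budget is entirely consumed by the forced vanishing of $M_3$ at the cusp arising from the two points $(\tfrac13,0),(\tfrac23,0)$, leaving nothing for interior zeros. The delicate steps are therefore the cusp analysis of Lemma \ref{infinity-behavior copy(1)} recorded in (\ref{vanish order}) and the nonvanishing at the elliptic points $i,\rho$ coming (via Lemma \ref{neq-zero}) from the PDE input Theorem B; a miscount at either place would break the argument. I note that the parity observation (\ref{mo-2}) is consistent with this count but is not needed here, since the interior sum is pinned directly to $0$.
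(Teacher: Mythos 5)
Your proof is correct and is essentially the paper's own argument: apply the valence formula to $M_{3}$ of weight $3|Q_{3}|=24$, compute $\nu_{\infty}(M_{3})=2$ from the two pairs $(\tfrac13,0),(\tfrac23,0)$ via (\ref{vanish order}), and conclude that the budget $24/12=2$ leaves no room for zeros in $\mathbb{H}$. The only cosmetic remark is that your appeal to Lemma \ref{neq-zero} at $i,\rho$ is superfluous (as you note, nonnegativity of all terms already forces $\nu_{i}=\nu_{\rho}=0$), which is just as well since Lemma \ref{neq-zero} is stated for $s\in[0,\tfrac12]$ and applying it to the elements of $Q_{3}$ with $s=\tfrac23$ would strictly require a reduction via property (i).
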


\begin{proof}
Note that $3|Q_{3}|=24$. Since $s\not =\frac{1}{2}$ and\ $(\frac
{1}{3},0)$, $(\frac{2}{3},0)\in Q_{3}$, we see from Lemma
\ref{infinity-behavior copy(1)}-(a) (or (\ref{vanish order})) that $M_{3}(\tau)\sim q^{2}$ as $F_{0}\ni
\tau \rightarrow \infty$, i.e. $\nu_{\infty}(M_{3})=2$. Therefore, we deduce
from (\ref{modular}) that $M_{3}(\tau)$ has no zeros in $\mathbb{H}$.
\end{proof}

\begin{figure}[btp]\label{fundamental-dom}
\includegraphics[width=3.6in]{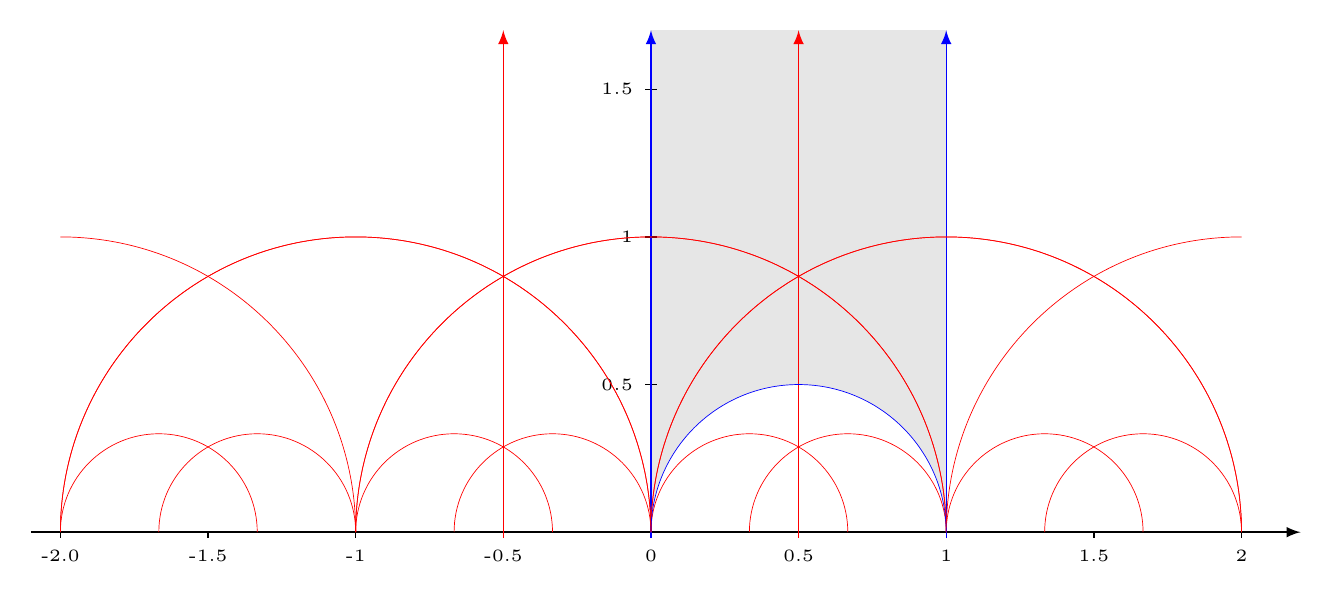}\caption{$F_{0}=F\cup
\gamma_{1}(F)\cup\gamma_{2}(F)$.}%
\end{figure}

Let $F$ be a fundamental domain of $SL(2,\mathbb{Z})$ defined by\footnote{Of course, the standard definition of $F$ should be $F:=\{ \tau \in \mathbb{H}\ |\ 0\leq \operatorname{Re}\tau< 1,|\tau|\geq
1,|\tau-1|>1\}\cup\{e^{\pi i/3}\}$, i.e. $\operatorname{Re}\tau=1$ is not needed. In this paper, to guarantee the validity of (\ref{F-de}), it is more convenient for us to use the definition (\ref{F-fun}), which does not effect our following argument because Lemma \ref{neq-zero} says that $Z_{r,s}^{(2)}(\tau)\neq 0$ if $\tau\in \{\tau\in \mathbb{H} |\operatorname{Re}\tau=1\}\cup \gamma_{1}(\{\tau\in \mathbb{H} |\operatorname{Re}\tau=1\})\cup\gamma_{2}(\{\tau\in \mathbb{H} |\operatorname{Re}\tau=1\})\subset \partial F_{0}\cap \mathbb{H}$.}
\begin{equation}\label{F-fun}
F:=\{ \tau \in \mathbb{H}\ |\ 0\leq \operatorname{Re}\tau\le 1,|\tau|\geq
1,|\tau-1|>1\}\cup\{e^{\pi i/3}\}.
\end{equation}
Define $\gamma(F):=\{
\gamma \cdot \tau|\tau \in F\}$ for any $\gamma \in SL(2,\mathbb{Z})$, then
$\gamma(F)$ is also a fundamental domain of $SL(2,\mathbb{Z})$. Let%
\begin{equation}
\gamma_{1}:=%
\begin{pmatrix}
0 & 1\\
-1 & 1
\end{pmatrix}
,\  \  \gamma_{2}:=%
\begin{pmatrix}
1 & -1\\
1 & 0
\end{pmatrix}
, \label{gam}%
\end{equation}
then it is easy to prove that
\begin{equation}
F_{0}=F\cup \gamma_{1}(F)\cup \gamma_{2}(F). \label{F-de}%
\end{equation}
See Figure 2, which is copied from \cite{CKLW}.
Now we are in the position to prove Theorem \ref{thm20}.

\begin{proof}
[Proof of Theorem \ref{thm20}]Recall Lemma \ref{neq-zero} that
$Z_{r,s}^{(2)}(\tau)\neq0$ for $\tau \in \{ \rho \}\cup \left(
\partial F_{0}\cap \mathbb{H}\right)  $. We divide the proof of Theorem
\ref{thm20} into several steps.\medskip

\textbf{Step 1.} We claim that $Z_{r,s}^{(2)}(\tau)$ has no zeros in $
F_{0}$ for $(r,s)\in \triangle_{0}$.\medskip

Lemma \ref{lem-1} says that $Z_{\frac{1}{3},\frac{1}{3}}^{(2)}(\tau)$ has no
zeros in $F_{0}$. Since $(\frac{1}{3},\frac{1}{3})\in \triangle_{0}$, our
claim follows directly from Lemma \ref{yl-1}.\medskip

\textbf{Step 2.} We claim that $Z_{r,s}^{(2)}(\tau)$ has a unique zero in
$F_{0}$ for $(r,s)\in \triangle_{1}\cup \triangle_{2}\cup \triangle_{3}$.\medskip

Since
\begin{equation}
(\tfrac{5}{6},\tfrac{1}{3})\in \triangle_{1},\text{ \ }(\tfrac{2}{3},\tfrac
{1}{6})\in \triangle_{2},\text{ \ }(\tfrac{1}{6},\tfrac{1}{6})\in \triangle
_{3},\label{fc-16}%
\end{equation}
by Lemma \ref{yl-1} we only need to prove the claim for $(r,s)$ $\in
\{(\tfrac{1}{6},\tfrac{1}{6}),$ $(\tfrac{2}{3},\tfrac{1}{6}),$ $(\tfrac{5}%
{6},\tfrac{1}{3})\}$.
Note that $M_{6}(\tau)$ is a modular form of weight
$3|Q_{6}|=72$.
For $(  r,s)\in Q_{6}$, Lemma
\ref{infinity-behavior copy(1)}-(a) shows that $\underset{\tau \rightarrow \infty}{\lim
}Z_{r,s}^{(2)}(\tau)$ $=0$ if and only if $\left(  r,s\right)\in \{(\tfrac{1}{6},0),(\tfrac{5}{6},0),(\tfrac{1}{6},\tfrac{1}{2}),(\tfrac{2}%
{6},\tfrac{1}{2}),(\tfrac{4}{6},\tfrac{1}{2}),(\tfrac{5}{6},\tfrac{1}{2})\}$. Thus we see from (\ref{vanish order}) that
\[
\nu_{\infty}(M_{6})=1\times2+\tfrac{1}{2}\times4=4.
\]
On the other hand, Lemma \ref{neq-zero} says $\nu_{i}(M_{6})=\nu_{\rho}%
(M_{6})=0$. Therefore, it follows from (\ref{modular}) that%
\begin{equation}
\sum_{\tau \in \mathbb{H}\backslash \{i,\rho \}}\nu_{\tau}(M_{6})=2.\label{eq-1}%
\end{equation}
Recall that $F$ defined in (\ref{F-fun}) is a
fundamental domain of $SL(2,\mathbb{Z})$. Applying (\ref{mo-2}), there exists a \emph{unique} $\tau_{0}\in
(F\cap \mathring{F}_{0})\setminus\{\rho=e^{\pi i/3}\}$ such that $\nu_{\tau_{0}}(M_{6})=2$, i.e. there exists
a \emph{unique} $(r_{1},s_{1})\in Q_{6}^{\prime}$ such that
\begin{equation}
Z_{r_{1},s_{1}}^{(2)}(\tau_{0})=0,\label{h}%
\end{equation}
where
\[
Q_{6}^{\prime}:=\left \{
\begin{array}
[c]{c}%
(0,\tfrac{1}{6}),(\tfrac{1}{6},0),(\tfrac{1}{6},\tfrac{1}{6}),(\tfrac{1}%
{6},\tfrac{1}{3}),(\tfrac{1}{6},\tfrac{1}{2}),(\tfrac{1}{3},\tfrac{1}{6}),\\
(\tfrac{1}{3},\tfrac{1}{2}),(\tfrac{1}{2},\tfrac{1}{6}),(\tfrac{1}{2}%
,\tfrac{1}{3}),(\tfrac{2}{3},\tfrac{1}{6}),(\tfrac{5}{6},\tfrac{1}{6}%
),(\tfrac{5}{6},\tfrac{1}{3})
\end{array}
\right \}\subset [0,1)\times[0,\tfrac{1}{2}].
\]
Remark that for any $(r,s)\in Q_6$, either $(r,s)\in Q_6'$ or there exists a unique $(\tilde{r},\tilde{s})\in Q_6'$ such that $(r,s)\equiv(-\tilde{r},-\tilde{s})$ mod $\mathbb{Z}^{2}$.

\textbf{Step 2-1.} We prove that $(r_{1},s_{1})\in \{(\tfrac{1}{6},\tfrac{1}%
{6}),(\tfrac{2}{3},\tfrac{1}{6}),(\tfrac{5}{6},\tfrac{1}{3})\}$.\medskip

Assume by contradiction that $(r_{1},s_{1})\not \in \{(\tfrac{1}%
{6},\tfrac{1}{6}),(\tfrac{2}{3},\tfrac{1}{6}),(\tfrac{5}{6},\tfrac{1}{3})\}$.
Then by (\ref{eq-1})-(\ref{h}), we have
\begin{equation}
Z_{r,s}^{(2)}(\tau)\not =0\text{ in }F\text{ for }(r,s)\in \{(\tfrac{1}%
{6},\tfrac{1}{6}),(\tfrac{2}{3},\tfrac{1}{6}),(\tfrac{5}{6},\tfrac{1}%
{3})\}.\label{fc-9}%
\end{equation}
Recall (\ref{gam})-(\ref{F-de}). Letting $\gamma=\gamma_{1}=%
\begin{pmatrix}
0 & 1\\
-1 & 1
\end{pmatrix}
$ in property (ii) leads to%
\begin{equation}
Z_{-s,r+s}^{(2)}(\gamma_{1}\cdot \tau)=(1-\tau)^{3}Z_{r,s}^{(2)}(\tau
).\label{fc-31}%
\end{equation}
Applying this to $(r,s)$ $\in \{(\tfrac{1}{6},\tfrac{1}{6}),(\tfrac{2}%
{3},\tfrac{1}{6}),(\tfrac{5}{6},\tfrac{1}{3})\}$, it follows from property (i)
that%
\begin{equation}
Z_{\frac{5}{6},\frac{1}{3}}^{(2)}(\gamma_{1}\cdot \tau)=Z_{\frac{-1}{6}%
,\frac{1}{3}}^{(2)}(\gamma_{1}\cdot \tau)=(1-\tau)^{3}Z_{\frac{1}{6},\frac
{1}{6}}^{(2)}(\tau),\label{fc-12}%
\end{equation}%
\begin{equation}
-Z_{\frac{1}{6},\frac{1}{6}}^{(2)}(\gamma_{1}\cdot \tau)=Z_{\frac{-1}{6}%
,\frac{5}{6}}^{(2)}(\gamma_{1}\cdot \tau)=(1-\tau)^{3}Z_{\frac{2}{3},\frac
{1}{6}}^{(2)}(\tau),\label{fc121}%
\end{equation}%
\begin{equation}
Z_{\frac{2}{3},\frac{1}{6}}^{(2)}(\gamma_{1}\cdot \tau)=Z_{\frac{-1}{3}%
,\frac{7}{6}}^{(2)}(\gamma_{1}\cdot \tau)=(1-\tau)^{3}Z_{\frac{5}{6},\frac
{1}{3}}^{(2)}(\tau).\label{fc-13}%
\end{equation}
Together with (\ref{fc-9}), we obtain
\begin{equation}
Z_{r,s}^{(2)}(\tau)\not =0\text{ in }\gamma_{1}(F)\text{ for }(r,s)\in
\{(\tfrac{1}{6},\tfrac{1}{6}),(\tfrac{2}{3},\tfrac{1}{6}),(\tfrac{5}{6}%
,\tfrac{1}{3})\}.\label{fc-10}%
\end{equation}
Similarly, letting $\gamma=\gamma_{2}=%
\begin{pmatrix}
1 & -1\\
1 & 0
\end{pmatrix}
$ in property (ii) leads to%
\begin{equation}
Z_{r+s,-r}^{(2)}(\gamma_{2}\cdot \tau)=\tau^{3}Z_{r,s}^{(2)}(\tau)
\label{fc-32}%
\end{equation}
and so%
\begin{equation}
-Z_{\frac{2}{3},\frac{1}{6}}^{(2)}(\gamma_{2}\cdot \tau)=Z_{\frac{1}{3}%
,\frac{-1}{6}}^{(2)}(\gamma_{2}\cdot \tau)=\tau^{3}Z_{\frac{1}{6},\frac{1}{6}%
}^{(2)}(\tau), \label{fc-14}%
\end{equation}%
\begin{equation}
Z_{\frac{5}{6},\frac{1}{3}}^{(2)}(\gamma_{2}\cdot \tau)=Z_{\frac{5}{6}%
,\frac{-2}{3}}^{(2)}(\gamma_{2}\cdot \tau)=\tau^{3}Z_{\frac{2}{3},\frac{1}{6}%
}^{(2)}(\tau), \label{fc141}%
\end{equation}%
\begin{equation}
Z_{\frac{1}{6},\frac{1}{6}}^{(2)}(\gamma_{2}\cdot \tau)=Z_{\frac{7}{6}%
,\frac{-5}{6}}^{(2)}(\gamma_{2}\cdot \tau)=\tau^{3}Z_{\frac{5}{6},\frac{1}{3}%
}^{(2)}(\tau). \label{fc-15}%
\end{equation}
Together with (\ref{fc-9}), we obtain
\[
Z_{r,s}^{(2)}(\tau)\not =0\text{ in }\gamma_{2}(F)\text{ for }(r,s)\in
\{(\tfrac{1}{6},\tfrac{1}{6}),(\tfrac{2}{3},\tfrac{1}{6}),(\tfrac{5}{6}%
,\tfrac{1}{3})\}.
\]
Therefore, it follows from (\ref{F-de}) (i.e.
$F_{0}=F\cup \gamma_{1}(F)\cup \gamma_{2}(F)$)
that $Z_{r,s}^{(2)}(\tau)\not =0$ in $F_{0}$ for $(r,s)$
$\in \{(\tfrac{1}{6},\tfrac{1}{6}),(\tfrac{2}{3},\tfrac{1}{6}),(\tfrac{5}%
{6},\tfrac{1}{3})\}.$
By (\ref{fc-16}), we conclude from Lemma \ref{yl-1} and Step 1 that%
\begin{equation}
Z_{r,s}^{(2)}(\tau)\not =0\text{ \ in \ }F_{0}\text{ \ for any \ }(r,s)\in \cup
_{k=0}^{3}\triangle_{k}. \label{fc-11}%
\end{equation}
From (\ref{fc-11}), (\ref{h}) and  $(r_{1},s_{1})\in Q'_{6}\subset [0,1)\times
[0,\frac{1}{2}]$, we obtain $(r_{1},s_{1})\in \cup_{k=0}^{3}\partial
\triangle_{k}$. This, together with $Z_{r_{1},s_{1}}^{(2)}(\tau_{0})=0$ and
the argument principle, implies the existence of $(r,s)\in \cup_{k=0}%
^{3}\triangle_{k}$ close to $(r_{1},s_{1})$ such that $Z_{r,s}^{(2)}(\tau)=0$
for some $\tau \in F_{0}$ close to $\tau_{0}$, a contradiction with
(\ref{fc-11}). Therefore, $(r_{1},s_{1})$ $\in \{(\tfrac{1}{6},\tfrac{1}%
{6}),(\tfrac{2}{3},\tfrac{1}{6}),(\tfrac{5}{6},\tfrac{1}{3})\}.$

\textbf{Step 2-2.} We prove that $Z_{r,s}^{(2)}(\tau)$ has a unique zero in
$F_{0}$ for each $(r,s)\in \{(\tfrac{1}{6},\tfrac{1}{6}),(\tfrac{2}{3},\tfrac
{1}{6}),(\tfrac{5}{6},\tfrac{1}{3})\}$.\medskip

By Step 2-1, without loss of generality, we may assume $(r_{1},s_{1}%
)=(\tfrac{1}{6},\tfrac{1}{6})$ (the other two cases $(r_{1},s_{1})=(\tfrac
{2}{3},\tfrac{1}{6}),(\tfrac{5}{6},\tfrac{1}{3})$ can be discussed in the same way).

Then $\tau_{0}$ is the unique zero of $Z_{\frac{1}{6},\frac{1}{6}}%
^{(2)}(\tau)$ in $F$ and (\ref{eq-1}) implies
\begin{equation}
Z_{r,s}^{(2)}(\tau)\not =0\text{ in }F\text{ for }(r,s)\in \{(\tfrac{2}%
{3},\tfrac{1}{6}),(\tfrac{5}{6},\tfrac{1}{3})\}. \label{s}%
\end{equation}
This together with (\ref{fc121}) and (\ref{fc-15}) implies
\[
Z_{\frac{1}{6},\frac{1}{6}}^{(2)}(\tau)\not =0\text{ in }\gamma_{1}%
(F)\cup \gamma_{2}(F).
\]
Therefore, by (\ref{F-de}) we conclude that $Z_{\frac{1}{6},\frac{1}{6}}^{(2)}(\tau)$ has
a unique zero in $F_{0}$.

For $\left(  r,s\right)  =\left(  \frac{5}{6},\frac{1}{3}\right)  $, by
applying (\ref{fc-12}), we see that $\gamma_{1}\cdot \tau_{0}$ is the unique
zero of $Z_{\frac{5}{6},\frac{1}{3}}^{(2)}(\tau)$ in $\gamma_{1}(F)$. Clearly
(\ref{fc141}) and (\ref{s}) give
\[
Z_{\frac{5}{6},\frac{1}{3}}^{(2)}(\tau)\not =0\text{ in }\gamma_{2}(F).
\]
Together with (\ref{s}) and (\ref{F-de}), we conclude that $Z_{\frac{5}{6},\frac{1}{3}}^{(2)}(\tau)$ has a unique
zero in $F_{0}$. By a similar discussion, $Z_{\frac{2}{3},\frac{1}{6}}%
^{(2)}(\tau)$ also has a unique zero in $F_{0}$.

Now by (\ref{fc-16}) and Lemma \ref{yl-1}, we conclude that $Z_{r,s}%
^{(2)}(\tau)$ has a unique zero in $F_{0}$ for $(r,s)\in \triangle_{1}%
\cup \triangle_{2}\cup \triangle_{3}$. This proves Step 2.\medskip

\textbf{Step 3.} We prove that $Z_{r,s}^{(2)}(\tau)\not =0$ in $F_{0}$ for any
$(r,s)\in \cup_{k=0}^{3}\partial \triangle_{k}\backslash \frac{1}{2}%
\mathbb{Z}^{2}$.\medskip

Suppose that there exists $(r_{0},s_{0})\in \cup_{k=0}^{3}\partial \triangle
_{k}\backslash \frac{1}{2}\mathbb{Z}^{2}$ such that $Z_{r_{0},s_{0}}^{(2)}%
(\tau)=0$ has a zero $\tau_{0}$ in $F_{0}$. Lemma \ref{neq-zero} implies
$\tau_{0}\in \mathring{F}_{0}\setminus\{\rho\}$. Clearly there exists a sequence of \emph{prime
numbers} $N\rightarrow \infty$ and $(\tilde{r}_{N},\tilde{s}_{N})\in Q_{2N},$
$\tilde{s}_{N}\leq \frac{1}{2}$, such that%
\[
(\tilde{r}_{N},\tilde{s}_{N})\in \cup_{k=0}^{3}\partial \triangle_{k}%
\backslash \tfrac{1}{2}\mathbb{Z}^{2}\text{ and }(\tilde{r}_{N},\tilde{s}%
_{N})\rightarrow(r_{0},s_{0})\text{ as }N\rightarrow \infty.
\]
Again by the argument principle, it follows that%
\begin{equation}
Z_{\tilde{r}_{N},\tilde{s}_{N}}^{(2)}(\tau)\text{ has a zero }\tilde{\tau}%
_{N}\in \mathring{F}_{0}\setminus\{\rho\}\text{ for }N\text{ large.}\label{kl}%
\end{equation}
By (\ref{F-de}), (\ref{fc-31}) and (\ref{fc-32}), we may always assume $\tilde{\tau
}_{N}\in F$ by replacing $\tilde{\tau}_{N}$ with one of $\{ \gamma_{1}%
^{-1}\cdot \tilde{\tau}_{N},$ $\gamma_{2}^{-1}\cdot \tilde{\tau}_{N}\}$ if necessary.

Now fix such a large prime number $N$. Recalling (\ref{f-N}) that $M_{2N}%
(\tau)$ is a modular form with respect to $SL(2,\mathbb{Z})$ of weight
$3|Q_{2N}|=9(N^{2}-1)$. Since for any $k\in \{1,3,\cdot \cdot \cdot,2N-1\}
\backslash \{N\}$, $(\frac{k}{2N},0)\in Q_{2N}$, and for any $k\in
\{1,2,\cdot \cdot \cdot,2N-1\} \backslash \{N\}$, $(\frac{k}{2N},\frac{1}{2})\in
Q_{2N}$, it follows from Lemma \ref{infinity-behavior copy(1)}-(a) that
\[
\nu_{\infty}(M_{2N})=1\times(N-1)  +\tfrac{1}{2}\times
2(N-1)=2(N-1).
\]
Together with Lemma \ref{neq-zero} that $\nu_{i}(M_{2N})=\nu_{\rho}(M_{2N}%
)=0$, we see from (\ref{modular}) that%
\begin{equation}
\sum_{\tau \in \mathbb{H}\backslash \{i,\rho \}}\nu_{\tau}(M_{N})=\frac
{3(N^{2}-1)}{4}-2(N-1)=\frac{3N^{2}-8N+5}{4}, \label{fc-23}%
\end{equation}
where the summation over $\tau$ is performed modulo $SL(2,\mathbb{Z})$ equivalence.

On the other hand, recall%
\[
\triangle_{3}=\{(r,s)\mid r>0,\text{ }s>0,\text{ }r+s<\tfrac{1}{2}\}.
\]
It is easy to compute that in $Q_{2N}$, there are%
\[
1+2+\cdot \cdot \cdot+\frac{N-3}{2}=\frac{(N-1)(N-3)}{8}%
\]
$(r,s)=(\frac{k_{1}}{2N},\frac{k_{2}}{2N})$'s belonging to $\triangle_{3}$
such that $k_{1}$ is odd and $k_{2}$ is even (resp. $k_{1}$ is even and
$k_{2}$ is odd); and there are%
\[
1+2+\cdot \cdot \cdot+\frac{N-1}{2}=\frac{(N+1)(N-1)}{8}%
\]
$(r,s)=(\frac{k_{1}}{2N},\frac{k_{2}}{2N})$'s belonging to $\triangle_{3}$
such that $k_{1}$ and $k_{2}$ are both odd. Thus%
\begin{align}
|Q_{2N}\cap \triangle_{3}| &  =\frac{(N-1)(N-3)}{4}+\frac{(N+1)(N-1)}%
{8}\label{mo-3}\\
&  =\frac{3N^{2}-8N+5}{8}.\nonumber
\end{align}
Write%
\[
Q_{2N}\cap \triangle_{3}=\left \{  (r_{k},s_{k})\left \vert 1\leq k\leq
\frac{3N^{2}-8N+5}{8}\right.  \right \}  .
\]
Applying Step 2, we see that $Z_{r_{k},s_{k}}(\tau)$ has a unique zero
$\tau_{k}\in \mathring{F}_{0}$ in $F_{0}$. If $\tau_{k}\in \gamma_{1}%
(F)\cup \gamma_{2}(F)$, say $\tau_{k}\in \gamma_{1}(F)$ for example, for some
$k$, then by (\ref{fc-31}) and property (i) it is easy to see the existence of
$(r_{k}^{\prime},s_{k}^{\prime})\in \triangle_{2}\cap Q_{N}$ such that
$\gamma_{1}^{-1}\cdot \tau_{k}\in F$ is the unique zero of $Z_{r_{k}^{\prime
},s_{k}^{\prime}}^{(2)}(\tau)$ in $F_{0}$. Therefore, together with (\ref{kl})
and (\ref{mo-3}), we conclude that%
\[
\sum_{\tau \in \mathbb{H}\backslash \{i,\rho \}}\nu_{\tau}(M_{N})\geq2|Q_{2N}%
\cap \triangle_{3}|+2=\frac{3N^{2}-8N+5}{4}+2,
\]
which is a contradiction with (\ref{fc-23}). This proves Step 3 and hence the proof
of Theorem \ref{thm20} is complete.
\end{proof}

\section{Existence and uniqueness of $\tau(C)$}

\label{tauc}

The purpose of this section is to prove the existence and uniqueness of $\tau(C)$ for $C\in\mathbb{R}\setminus\{0,1\}$ by applying Theorem \ref{thm2}. Given $C\in
\mathbb{R}$, we define the holomorphic function $f_{C}(\tau)$ on $\mathbb{H}$ by
\begin{equation}
f_{C}(\tau):=12(C\eta_{1}(\tau)-\eta_{2}(\tau))^{2}-g_{2}(\tau)(C-\tau)^{2}.
\label{i-54}%
\end{equation}
By $\eta
_{2}=\tau \eta_{1}-2\pi i$, we see that $f_{C}(\tau)=0$ if and only if (\ref{c=tau}) holds. This $f_C(\tau)$ appears in the expression of solutions of certain Painlev\'{e} VI equation (cf. \cite{CKL2,CKLT}).
The following result proves the existence and uniqueness of $\tau(C)$ as zeros of $f_{C}(\tau)$. Recall the fundamental domain $F_{0}$ of $\Gamma_{0}(2)$:
\[
F_{0}=\{ \tau \in \mathbb{H}\;|\;0\leq \operatorname{Re}\tau \leq1,\text{ \ }%
|\tau-\tfrac{1}{2}|\geq \tfrac{1}{2}\}.
\]

\begin{theorem}[Zero structure of $f_{C}(\tau)$ in $F_{0}$]
\label{Unique-pole}{\ }
\begin{itemize}
\item[(1)] For any $C\in \mathbb{R}\backslash \{0,1\}$, $f_{C}(\tau)$
has a unique zero $\tau(C)$ in $F_{0}$. Furthermore, $\tau(C)\in \mathring{F}_{0}$.
\item[(2)] For $C\in \{0,1\}$, $f_{C}(\tau)$
has no zeros in $F_{0}$.
\end{itemize}
\end{theorem}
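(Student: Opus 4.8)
The plan is to exhibit $f_C$ as a \emph{blow-up} of the pre-modular form $Z_{r,s}^{(2)}$ along the ray $(r,s)=(-Cs,s)$ and to transport the zero count of Theorem \ref{thm20} to $f_C$. First I would prove that, locally uniformly on $\mathbb{H}$,
\begin{equation*}
\tfrac1s Z_{-Cs,s}^{(2)}(\tau)\longrightarrow\frac{f_C(\tau)}{4(\tau-C)}\qquad\text{as } s\to0^+ .
\end{equation*}
This is a residue computation: substituting the Laurent expansions of $\zeta,\wp,\wp'$ at $z=s(\tau-C)\to0$ into (\ref{new3}) and writing $A:=C\eta_1-\eta_2$, $w:=\tau-C$, the $s^{-3}$ and $s^{-1}$ terms cancel identically, while the coefficient of $s^{1}$ equals $\frac{3A^2}{w}-\frac{g_2}{4}w=\frac{12A^2-g_2w^2}{4w}$, which is $f_C(\tau)/(4(\tau-C))$ by (\ref{i-54}). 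Since $\tau-C\neq0$ on $\mathbb{H}$, the zeros of $f_C$ in $F_0$ are exactly the limit locations of the zeros of $Z_{-Cs,s}^{(2)}$.

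Next I would locate $(-Cs,s)$. For small $s>0$, reducing into $[0,1]\times[0,\tfrac12]$ by property (i), it stays $(-Cs,s)\in\triangle_3$ when $C<0$, becomes $(1-Cs,s)\in\triangle_1$ when $0<C<1$, and $(1-Cs,s)\in\triangle_2$ when $C>1$; for $C=0$ (resp. $C=1$) it lands on the edge $r=0$ of $\triangle_3$ (resp. on the segment $r+s=1$ separating $\triangle_1,\triangle_2$). Hence Theorem \ref{thm20} gives, for $C\in\mathbb{R}\setminus\{0,1\}$, a unique zero $\tau_s\in\mathring F_0$ of $Z_{-Cs,s}^{(2)}$, and, for $C\in\{0,1\}$, no zero at all. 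Case (2) then follows at once from Hurwitz's theorem: $\tfrac1s Z_{-Cs,s}^{(2)}$ is zero-free on $\mathring F_0$ and converges to $f_C/(4(\tau-C))\not\equiv0$, so $f_C$ is zero-free on $\mathring F_0$, and the boundary non-vanishing below excludes zeros on $\partial F_0\cap\mathbb H$.

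For case (1) I would run a continuity-of-winding argument on the truncated region $\Omega:=F_0\setminus(N_\infty\cup N_0\cup N_1)$, where $N_\bullet$ are small horocyclic neighbourhoods of the three cusps. Two ingredients make this succeed. First, for $C\notin\{0,1\}$, $f_C\to\infty$ at all three cusps: one has $f_C\sim-16\pi^3 i\tau$ as $\operatorname{Im}\tau\to\infty$ by (\ref{d-eta11}), and the transformation law $f_C(\gamma\tau)=(\text{nonzero factor})\,f_{\gamma^{-1}\cdot C}(\tau)$—verified on generators via $f_C(\tau+1)=f_{C-1}(\tau)$ and $f_C(-1/\tau)=C^2\tau^2 f_{-1/C}(\tau)$—carries this to the cusps $0,1$ (this is where $C\neq0$ and $C\neq1$ enter). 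Second, $f_C\neq0$ on $\partial F_0\cap\mathbb H$. Granting both, $g_0:=f_C/(4(\tau-C))$ has no zero in any $N_\bullet$ and none on $\partial\Omega$, so the uniform convergence established above on the compact set $\partial\Omega$ forces $\tfrac1sZ_{-Cs,s}^{(2)}$ and $g_0$ to have equal zero counts in $\Omega$ and in each $N_\bullet$ for small $s$. The unique zero $\tau_s$ therefore lies in $\Omega$, giving exactly one zero of $f_C$ in $F_0$, which is interior by the boundary step.

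I expect the boundary non-vanishing to be the main obstacle, as it is the analogue of Lemma \ref{neq-zero} but is not visible from the formula (\ref{i-54}). By the transformation law the arcs $\operatorname{Re}\tau=1$ and $|\tau-\tfrac12|=\tfrac12$ reduce to the imaginary axis (with $C$ replaced by $C-1$ and $\tfrac{C}{1-C}$; the degenerate reductions at $C\in\{0,1\}$ are absorbed by the symmetry $f_1(\tau)=f_0(\tau-1)$), so it suffices to show $f_C(ib)\neq0$ for all $b>0$ and all real $C$. The decisive elementary input is the strict monotonicity $\frac{d}{db}E_2(ib)=48\pi\sum_{n\ge1} n\,b_n e^{-2\pi n b}>0$, read off from (\ref{eisenstein}). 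Writing $\phi(b):=E_2(ib)$ and differentiating $Q_C(ib):=(ib-C)^2E_2(ib)-\tfrac{6i}{\pi}(ib-C)$ (whose $\tau$-derivative is a nonzero multiple of $f_C$), the imaginary part of $\frac{d}{db}Q_C(ib)$ is $-2C\,(b\phi)'(b)$, while at any zero of $(b\phi)'$ the real part equals $\phi'(b)(C^2+b^2)+\tfrac6\pi>0$; the case $C=0$ is handled directly through $E_2(-1/\tau)=\tau^2E_2(\tau)-\tfrac{6i}{\pi}\tau$. Hence $\frac{d}{db}Q_C(ib)\neq0$, i.e. $f_C(ib)\neq0$. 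This boundary analysis, together with the uniform estimates near the cusps needed to license the winding transfer, is where the real work lies; the remaining steps are a direct residue computation and an appeal to Theorem \ref{thm20}.
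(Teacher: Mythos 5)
Your skeleton is the paper's own: blow up $Z_{-Cs,s}^{(2)}$ along $(r,s)=(-Cs,s)$ (your residue computation is exactly Lemma \ref{lemma-9}, and your triangle bookkeeping is (\ref{cc-13})), transfer the zero count of Theorem \ref{thm20} by Rouch\'{e}/Hurwitz, and exclude boundary zeros separately. But there is one genuine gap, and it sits at the hardest point of the paper's argument: your claim that uniform convergence of $\tfrac1s Z_{-Cs,s}^{(2)}$ to $f_C/(4(\tau-C))$ on the compact set $\partial\Omega$ forces equal zero counts ``in $\Omega$ \emph{and in each} $N_\bullet$'' is invalid for the $N_\bullet$. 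The horocyclic neighbourhoods are not compactly contained in $\mathbb{H}$ (their closures contain the cusps, and $\partial N_\infty$ contains two unbounded vertical rays), so the argument principle gives you nothing there; moreover the convergence is only \emph{locally} uniform and genuinely degenerates near the cusps, where the relevant variable is $x=e^{2\pi i s(\tau-C)}$ against $q=e^{2\pi i\tau}$ rather than $\tau$ alone. In particular, knowing $f_C\to\infty$ at the three cusps does \emph{not} prevent the unique zero $\tau_s$ of $Z_{-Cs,s}^{(2)}$ from escaping to a cusp as $s\to0$ --- and if it did escape, existence of a zero of $f_C$ in $F_0$ would fail (uniqueness and interiority do follow from your Hurwitz and boundary steps, but existence does not). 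Ruling out escape is precisely the paper's Lemma \ref{lemma-7} (a delicate two-regime $q$-expansion analysis comparing $|x^{-1}q|$ with $s|1-x|^2$) together with Lemma \ref{lemma-8} (transfer to the cusps $0,1$ via $\tilde C=-1/C$ and $C/(1-C)$, the second place where $C\neq0,1$ is used). You flag ``uniform estimates near the cusps'' as needed but supply nothing; that is the missing core of case (1), not a routine verification.

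Where you genuinely diverge, the proposal is sound and in one respect cleaner than the paper. Your case (2) is proved \emph{directly}: for $C\in\{0,1\}$ the parameters $(0,s)$ and $(1-s,s)$ land on $\partial\triangle_3$ and on the edge $r+s=1$, Theorem \ref{thm20} makes $Z_{-Cs,s}^{(2)}$ zero-free in $F_0$, and Hurwitz plus boundary nonvanishing finishes; the paper instead routes case (2) through (\ref{eta1g2}), whose proof (Theorem \ref{Coro-1}) itself uses case (1) and the asymptotics of Lemma \ref{lemma-11}, so your argument decouples the two cases. Your boundary lemma is also new and correct: I verified the identity $f_C(\tau)=-8\pi^3 i\,\tfrac{d}{d\tau}\bigl[(\tau-C)^2E_2(\tau)-\tfrac{6i}{\pi}(\tau-C)\bigr]$ and the real/imaginary-part computation on $\tau=ib$, so the monotonicity $\tfrac{d}{db}E_2(ib)>0$ handles \emph{all} real $C$ at once, whereas the paper's Lemma \ref{lemma-10} covers only $C\neq0,1$ via the reality of $\tfrac{2\pi i}{\tau-C}=\eta_1\pm\sqrt{g_2/12}$ on $i\mathbb{R}^+$. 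One small gloss there: for $C=1$ on the arc $|\tau-\tfrac12|=\tfrac12$, the symmetry $f_1(\tau)=f_0(\tau-1)$ puts you on $|z+\tfrac12|=\tfrac12$, not on the imaginary axis; you need one more inversion ($f_0(z)$ is a nonzero multiple of $\tfrac{d}{dz}E_2(-1/z)$ with $-1/z\in 1+i\mathbb{R}^+$), after which periodicity and the same monotonicity apply --- spell this out. With the no-escape control imported (Lemmas \ref{lemma-7}--\ref{lemma-8}, or an equivalent estimate of your own), your proof closes.
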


Recall the classical result (cf.
\cite{YB}) that
\begin{equation}\label{d-eta1}
\eta_{1}^{\prime}(\tau)=\frac{i}{2\pi}\left(\eta_{1}(\tau)^{2}-\tfrac{1}%
{12}g_{2}(\tau)\right)  .
\end{equation}
To prove Theorem \ref{Unique-pole}, first we need the following lemma.

\begin{lemma}\label{lemma-etai}
If $\tau=ib$ with $b>0$, then $\eta_{1}'(\tau)\neq 0$ and
\begin{equation}\label{g2eta1}
g_{2}(\tau)-12\eta_{1}(\tau)^2>0,
\end{equation}
\begin{equation}
12\eta_{2}(\tau)^{2}-\tau^{2}g_{2}(\tau)>0. \label{i-47}%
\end{equation}
\end{lemma}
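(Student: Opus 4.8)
The plan is to reduce the whole lemma to the single inequality (\ref{g2eta1}), which becomes transparent once one restricts the $q$-expansion (\ref{eisenstein}) to the imaginary axis. Since $\frac{3}{\pi^2}\eta_1(\tau)=E_2(\tau)=1-24\sum_{n\geq 1}b_n e^{2\pi i n\tau}$ and $e^{2\pi i n(ib)}=e^{-2\pi nb}$, the value $\eta_1(ib)=\frac{\pi^2}{3}\bigl(1-24\sum_{n\geq 1}b_n e^{-2\pi nb}\bigr)$ is manifestly real. Writing $h(b):=\eta_1(ib)$ and differentiating this locally uniformly convergent series term by term (legitimate because $b_n=O(n^{1+\epsilon})$), I would obtain $h'(b)=16\pi^3\sum_{n\geq 1} n\,b_n\,e^{-2\pi nb}>0$ for every $b>0$, so $h$ is strictly increasing. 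This is precisely the place where the imaginary axis is special: on the line $\operatorname{Re}\tau=\tfrac12$ the same expansion acquires a factor $(-1)^n$ and the positivity is destroyed, which is the difficulty underlying Corollary \ref{line1/2}.

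On the other hand, the chain rule ($\tfrac{d\tau}{db}=i$) together with (\ref{d-eta1}) gives
$h'(b)=i\,\eta_1'(ib)=i\cdot\tfrac{i}{2\pi}\bigl(\eta_1(ib)^2-\tfrac{1}{12}g_2(ib)\bigr)=\tfrac{1}{24\pi}\bigl(g_2(ib)-12\eta_1(ib)^2\bigr)$. Comparing this with the previous paragraph establishes (\ref{g2eta1}). Moreover, once $g_2(ib)-12\eta_1(ib)^2>0$ is known, formula (\ref{d-eta1}) forces $\eta_1'(ib)=\tfrac{i}{2\pi}\cdot\bigl(-\tfrac{1}{12}\bigr)\bigl(g_2-12\eta_1^2\bigr)\neq 0$, which is exactly the first assertion. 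Thus the first two claims cost essentially nothing beyond the $q$-expansion.

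For (\ref{i-47}) I would bootstrap from (\ref{g2eta1}) via the substitution $\tau\mapsto -1/\tau$, which preserves the imaginary axis since $-1/(ib)=i/b$. Applying (\ref{II-31-1-00}) with $\gamma=\left(\begin{smallmatrix}0 & -1\\ 1 & 0\end{smallmatrix}\right)$ yields $\eta_1(-1/\tau)=\tau\eta_2(\tau)$, while (\ref{II-30-00}) gives $g_2(-1/\tau)=\tau^4 g_2(\tau)$. Combining these with the Legendre relation $\eta_2(ib)=ib\,\eta_1(ib)-2\pi i$, a short computation produces $\eta_1(i/b)=-b^2\eta_1(ib)+2\pi b$ and $g_2(i/b)=b^4 g_2(ib)$. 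I would then feed the point $i/b$, which again lies on the imaginary axis, into the already-proven inequality (\ref{g2eta1}), namely $g_2(i/b)-12\eta_1(i/b)^2>0$. Substituting and dividing by $b^2>0$ leaves $b^2 g_2(ib)-12\bigl(b\eta_1(ib)-2\pi\bigr)^2>0$, which is precisely (\ref{i-47}) after using $\tau^2=-b^2$ and $\eta_2(ib)^2=-\bigl(b\eta_1(ib)-2\pi\bigr)^2$.

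There is no serious obstacle here; the lemma is genuinely elementary once the imaginary-axis viewpoint is adopted. The only delicate point is sign and normalization bookkeeping, and I would verify two things with care: that the differentiated series converges so term-by-term differentiation is valid, and that the transformation formulas for $\eta_1,\eta_2,g_2$ under $\tau\mapsto -1/\tau$ carry exactly the right powers of $\tau$, since a slip there would corrupt the final inequality (\ref{i-47}).
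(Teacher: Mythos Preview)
Your proof is correct and follows essentially the same approach as the paper: derive (\ref{g2eta1}) by differentiating the $q$-expansion of $\eta_1$ along the imaginary axis and invoking (\ref{d-eta1}), then obtain (\ref{i-47}) from (\ref{g2eta1}) via the modular transformation $\tau\mapsto -1/\tau$ using (\ref{II-30-00})--(\ref{II-31-1-00}). Your write-up is somewhat more explicit in the sign and normalization bookkeeping, but the argument is the same.
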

\begin{proof}
Denote $q=e^{2\pi i\tau}$. Recall the $q$-expansion of
$\eta_{1}(\tau)$ (see (\ref{eisenstein})):
\begin{equation}
\eta_{1}(\tau)=\frac{\pi^{2}}{3}-8\pi^{2}\sum_{k=1}^{\infty}b_{k}q^{k},\quad \text{where } b_{k}=\sum_{1\leq d|k}d.
\label{ex-1}%
\end{equation}%
Let $\tau=ib$ with $b>0$. Then $q=e^{-2\pi b}$ and hence $\frac{d}{db}\eta_{1}(ib)>0$ for $b>0$. So $\eta_{1}'(\tau)\neq 0$ and (\ref{g2eta1}) follows from (\ref{d-eta1}).
To prove (\ref{i-47}), we use the following modular property (see (\ref{II-31-1-00})):
\begin{equation}\label{eta1i}\eta_{1}(\tfrac{-1}{\tau})=\tau\eta_{2}(\tau),
\;\;g_{2}(\tfrac{-1}{\tau})=\tau^{4}g_{2}(\tau).\end{equation}
It follows that
\[12\eta_{2}(\tau)^{2}-\tau^{2}g_{2}(\tau)
=\frac{1}{\tau^{2}}
\left[12\eta_{1}(\tfrac{-1}{\tau})^{2}-g_{2}(\tfrac{-1}{\tau})\right]>0,\]
i.e. (\ref{i-47}) holds.\end{proof}

\begin{lemma}
\label{lemma-10}For any $C\in \mathbb{R}\backslash \{0,1\}$, $f_{C}(\tau
)\not =0$ for $\tau \in \partial F_{0}\cap \mathbb{H}$.
\end{lemma}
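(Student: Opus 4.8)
The plan is to mimic the strategy of Lemma \ref{neq-zero}: first establish a modular transformation law for $f_C$, then use it to carry each of the three components of $\partial F_0\cap\mathbb{H}$ onto the positive imaginary axis $i\mathbb{R}^+$, and finally prove directly that $f_{C}(ib)\neq 0$ for every $b>0$ and every real $C\neq 0$. The hypothesis $C\in\mathbb{R}\setminus\{0,1\}$ will enter precisely to keep the transformed parameter away from the forbidden value $0$.

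The first step is the transformation law. Writing $C\eta_1-\eta_2=(C-\tau)\eta_1+2\pi i$ and using the matrix identity (\ref{II-31-1-00}) together with $g_2(\gamma\tau)=(c\tau+d)^4 g_2(\tau)$, a direct computation (with $C'=\gamma\cdot C=\frac{aC+b}{cC+d}$) gives, for every $\gamma=\begin{pmatrix}a&b\\c&d\end{pmatrix}\in SL(2,\mathbb{Z})$ with $cC+d\neq0$,
\[ f_{\gamma\cdot C}(\gamma\cdot\tau)=\frac{(c\tau+d)^2}{(cC+d)^2}\,f_C(\tau). \]
The two needed identities are $C'\eta_1(\gamma\tau)-\eta_2(\gamma\tau)=\frac{c\tau+d}{cC+d}\,(C\eta_1-\eta_2)$ and $C'-\gamma\tau=\frac{C-\tau}{(cC+d)(c\tau+d)}$, both consequences of $ad-bc=1$. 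Since $c\tau+d\neq0$ on $\mathbb{H}$, this makes $f_C(\tau)=0$ equivalent to $f_{C'}(\gamma\tau)=0$ whenever $cC+d\neq0$. I would then apply this to the three boundary pieces: for $\operatorname{Re}\tau=1$ take $\gamma=\begin{pmatrix}1&-1\\0&1\end{pmatrix}$, so $\gamma\tau=\tau-1\in i\mathbb{R}^+$ and $C'=C-1\neq0$ (using $C\neq1$); for the arc $|\tau-\tfrac12|=\tfrac12$ take $\gamma=\begin{pmatrix}1&0\\-1&1\end{pmatrix}$ as in Lemma \ref{neq-zero}, so $\gamma\tau=\frac{\tau}{1-\tau}\in i\mathbb{R}^+$, $cC+d=1-C\neq0$ and $C'=\frac{C}{1-C}\neq0$ (using $C\neq0$); the segment $\operatorname{Re}\tau=0$ already lies on $i\mathbb{R}^+$ with $C'=C\neq0$. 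Thus in every case the problem reduces to showing $f_{C'}(ib)\neq0$ for all $b>0$ and all real $C'\neq0$.

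On $\tau=ib$ I would compute the imaginary part of $f_{C'}$. Using $\eta_1(ib),g_2(ib)\in\mathbb{R}$ and $\eta_2(ib)=i(b\eta_1-2\pi)$, one finds
\[ \operatorname{Im} f_{C'}(ib)=2C'\,h(b),\qquad h(b):=24\pi\eta_1(ib)+b\big(g_2(ib)-12\eta_1(ib)^2\big). \]
Since $C'\neq0$, it suffices to prove $h(b)>0$ for all $b>0$. I would split on the sign of $\eta_1(ib)$. If $\eta_1(ib)\geq0$, then both terms of $h(b)$ are nonnegative and the second is strictly positive by (\ref{g2eta1}), so $h(b)>0$. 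If $\eta_1(ib)<0$, then rewriting $b\,h(b)=b^2 g_2-12(b\eta_1-\pi)^2+12\pi^2$ and inserting the inequality $b^2 g_2>12(b\eta_1-2\pi)^2$ coming from (\ref{i-47}) yields $b\,h(b)>24\pi(2\pi-b\eta_1)>0$, since $b\eta_1<0$. Either way $h(b)>0$, hence $\operatorname{Im} f_{C'}(ib)\neq0$ and $f_{C'}(ib)\neq0$, which completes the proof.

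The main obstacle is the positivity $h(b)>0$. The two inequalities of Lemma \ref{lemma-etai} are exactly what is needed, but they must be combined through the case distinction on the sign of $\eta_1(ib)$: inequality (\ref{g2eta1}) controls $h$ only where $\eta_1(ib)\geq0$ (indeed $\eta_1(ib)\to-\infty$ as $b\to0^+$), while (\ref{i-47}) is decisive exactly in the complementary regime $\eta_1(ib)<0$. Everything else in the argument is the modular bookkeeping of the second paragraph, which is routine once the transformation law is in hand.
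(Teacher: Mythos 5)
Your proof is correct, and while it shares the paper's reduction skeleton, the decisive step is genuinely different. The paper also transports the three boundary components to $i\mathbb{R}^{+}$, using exactly your two matrices $\begin{pmatrix}1&-1\\0&1\end{pmatrix}$ and $\begin{pmatrix}1&0\\-1&1\end{pmatrix}$, though it verifies the resulting identities $f_{C-1}(\tau-1)=f_{C}(\tau)$ and $f_{C/(1-C)}(\tfrac{\tau}{1-\tau})=\tfrac{(1-\tau)^{2}}{(1-C)^{2}}f_{C}(\tau)$ case by case rather than via your uniform law $f_{\gamma\cdot C}(\gamma\cdot\tau)=\tfrac{(c\tau+d)^{2}}{(cC+d)^{2}}f_{C}(\tau)$; your general formula is a clean packaging of those computations (and is essentially what reappears later in the paper's proof of Theorem \ref{Coro-1}-(2), where $12\eta_{1}(\gamma\cdot\tau)^{2}-g_{2}(\gamma\cdot\tau)=c^{2}(c\tau+d)^{2}f_{-d/c}(\tau)$). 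Where you diverge is on the imaginary axis: the paper factors the equation $f_{C}(ib)=0$ into $\tfrac{2\pi i}{\tau-C}=\eta_{1}(\tau)\pm\sqrt{g_{2}(\tau)/12}$ and gets an immediate contradiction, since $g_{2}(ib)>0$ and $\eta_{1}(ib)\in\mathbb{R}$ make the right side real while $\operatorname{Im}\tfrac{2\pi i}{ib-C}=\tfrac{-2\pi C}{b^{2}+C^{2}}\neq0$ for real $C\neq0$ --- a two-line argument needing only reality facts, not the inequalities of Lemma \ref{lemma-etai}. You instead avoid any square-root branch by computing $\operatorname{Im}f_{C'}(ib)=2C'h(b)$ with $h(b)=24\pi\eta_{1}(ib)+b\bigl(g_{2}(ib)-12\eta_{1}(ib)^{2}\bigr)$ and proving the quantitative positivity $h(b)>0$; I checked the algebra, including the rewriting $b\,h(b)=b^{2}g_{2}-12(b\eta_{1}-\pi)^{2}+12\pi^{2}$ and the insertion of (\ref{i-47}) to get $b\,h(b)>24\pi(2\pi-b\eta_{1})$, and it is sound, with the sign split on $\eta_{1}(ib)$ genuinely needed since $\eta_{1}(ib)\to-\infty$ as $b\to0^{+}$ while $b\eta_{1}(ib)\to+\infty$ as $b\to+\infty$. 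The trade-off: the paper's square-root trick is shorter and uses less of Lemma \ref{lemma-etai} (only $g_{2}(ib)>0$, $\eta_{1}(ib)\in\mathbb{R}$), whereas your argument exploits both (\ref{g2eta1}) and (\ref{i-47}), is branch-free, and yields an explicit lower bound on $\operatorname{Im}f_{C'}$ along $i\mathbb{R}^{+}$ rather than a bare nonvanishing statement.
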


\begin{proof}
Suppose $f_{C}(\tau)=0$ for some $\tau \in \partial F_{0}\cap \mathbb{H}$.

\textbf{Case 1.} $\tau \in i\mathbb{R}^{+}$.

Then it is known that $g_2(\tau)>0$, $\eta_1(\tau)\in \mathbb{R}$ (see e.g. Lemma \ref{lemma-etai}) and $\eta_2(\tau)=\tau\eta_1(\tau)-2\pi i\in i\mathbb{R}$. It follows from $f_{C}(\tau)=0$ and (\ref{i-54}) that
\[\frac{2\pi i}{\tau-C}=\eta_1(\tau)\pm\sqrt{g_2(\tau)/12}\in\mathbb{R},\]
a contradiction with our assumption $C\in\mathbb{R}\setminus \{0\}$.

\textbf{Case 2.} $|\tau-\frac{1}{2}|=\frac{1}{2}$.

Then $\tau^{\prime}=\frac{\tau}{1-\tau}\in i\mathbb{R}^{+}$. Define
$C^{\prime}:=\frac{C}{1-C}\in \mathbb{R}\setminus\{0\}$. By $g_{2}(\tau^{\prime})=(1-\tau)^{4}g_{2}(\tau)$
and%
\begin{equation}
\eta_{2}(\tau^{\prime})=(1-\tau)\eta_{2}(\tau),\text{ }\eta_{1}(\tau^{\prime
})=(1-\tau)(\eta_{1}(\tau)-\eta_{2}(\tau)), \label{c-5}%
\end{equation}
a straightforward computation leads to%
\[
f_{C^{\prime}}(\tau^{\prime})=\frac{(1-\tau)^{2}}{(1-C)^{2}}f_{C}(\tau)=0.
\]
Then we obtain a contradiction as Case 1.

\textbf{Case 3.} $\tau \in1+i\mathbb{R}^{+}$.

Then $\tau^{\prime}=\tau-1\in i\mathbb{R}^{+}$. Define $C^{\prime}:=C-1\in\mathbb{R}\setminus\{0\}$. By
using $g_{2}(\tau^{\prime})=g_{2}(\tau)$ and%
\begin{equation}
\eta_{1}(\tau^{\prime})=\eta_{1}(\tau),\text{ }\eta_{2}(\tau^{\prime}%
)=\eta_{2}(\tau)-\eta_{1}(\tau), \label{c-6}%
\end{equation}
we easily obtain $f_{C^{\prime}}(\tau^{\prime})=f_{C}(\tau)=0$, again a
contradiction as Case 1.

The proof is complete.
\end{proof}

Recall the pre-modular form $Z_{r,s}^{(2)}(\tau)$ in Section \ref{modularform}.
Now we study the precise relation between $Z_{r,s}^{(2)}(\tau)$ and $f_{C}(\tau)$. This is {\it the key point} of our whole idea.
Fix any $C\in \mathbb{R}$, and for $s\in(0,\frac{1}{4(1+|C|)^{2}})$ we define%
\[
F_{C,s}(\tau):=\frac{4(\tau-C)}{s}Z_{-Cs,s}^{(2)}(\tau).
\]

\begin{lemma}
\label{lemma-9}Letting $s\rightarrow0$, $F_{C,s}(\tau)$ converges to
$f_{C}(\tau)$ uniformly in any compact subset of $F_{0}=\bar{F}_{0}%
\cap \mathbb{H}$.
\end{lemma}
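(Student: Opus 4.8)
The plan is to insert the Laurent expansions of $\zeta$, $\wp$ and $\wp'$ about the origin into the definition $Z^{(2)}_{-Cs,s}=Z_{-Cs,s}^{3}-3\wp Z_{-Cs,s}-\wp'$ and read off the expansion of $Z^{(2)}_{-Cs,s}(\tau)$ in powers of $s$. Write $w:=\tau-C$ and $z:=sw=s(\tau-C)$, so that the argument $r+s\tau=-Cs+s\tau=z$ tends to $0$ as $s\to0$ while staying nonzero (because $\operatorname{Im}(\tau-C)=\operatorname{Im}\tau>0$). Starting from (\ref{c-7}) with $(r,s)=(-Cs,s)$ and the classical expansion $\zeta(z|\tau)=z^{-1}-G_4z^{3}+O(z^{5})$, one obtains
\[
Z_{-Cs,s}(\tau)=\frac{1}{sw}+sA-G_4s^{3}w^{3}+O(s^{5}),\qquad A:=2\pi i-(\tau-C)\eta_1(\tau),
\]
and the Legendre relation $\eta_2=\tau\eta_1-2\pi i$ identifies $A=C\eta_1(\tau)-\eta_2(\tau)$.

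First I would record the companion expansions $\wp(z)=z^{-2}+3G_4z^{2}+O(z^{4})$ and $\wp'(z)=-2z^{-3}+6G_4z+O(z^{3})$, substitute all three series into $Z^{(2)}_{-Cs,s}$, and group by powers of $s$. The decisive step --- and the one I expect to be the main obstacle in the bookkeeping --- is to verify that the singular parts cancel exactly: the coefficient of $s^{-3}$ is $w^{-3}-3w^{-3}+2w^{-3}=0$ and the coefficient of $s^{-1}$ is $3Aw^{-2}-3Aw^{-2}=0$. This exact cancellation is precisely the reason one may multiply by $s^{-1}$ and still obtain a finite limit. Collecting the coefficient of $s^{1}$ and using $g_2=60G_4$ gives
\[
Z^{(2)}_{-Cs,s}(\tau)=s\Bigl(\frac{3A^{2}}{w}-\frac{g_2(\tau)\,w}{4}\Bigr)+O(s^{3}).
\]
Multiplying by $\tfrac{4(\tau-C)}{s}=\tfrac{4w}{s}$ then yields $F_{C,s}(\tau)=12A^{2}-g_2(\tau)w^{2}+O(s^{2})$, which by $A=C\eta_1-\eta_2$, $w^{2}=(C-\tau)^{2}$ and (\ref{i-54}) equals $f_C(\tau)+O(s^{2})$.

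Finally I would promote this pointwise identity to uniform convergence on a compact set $K\subset\bar{F}_0\cap\mathbb{H}$. On $K$ the quantity $\operatorname{Im}\tau$ is bounded below and $|\tau-C|$ is bounded above, so $z=s(\tau-C)\to0$ uniformly and, once $s$ is small, $z$ lies in the punctured disc of convergence of the three Laurent series, whose radius (the distance from $0$ to the nearest nonzero point of $\Lambda_\tau$) is bounded below on $K$. Consequently every $O(s^{k})$ remainder above is uniform in $\tau\in K$, and $\sup_{\tau\in K}|F_{C,s}(\tau)-f_C(\tau)|=O(s^{2})\to0$, which is the claim.
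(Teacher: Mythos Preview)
Your argument is correct and follows essentially the same route as the paper: expand $\zeta$, $\wp$, $\wp'$ about the origin, substitute into $Z^{(2)}_{-Cs,s}$, check that the $s^{-3}$ and $s^{-1}$ coefficients cancel, and read off the $s^{1}$ coefficient as $\tfrac{3A^{2}}{w}-\tfrac{g_2 w}{4}$, with uniformity coming from the boundedness of the relevant elliptic data on a compact $K\subset F_0$. Your grouping via $A=C\eta_1-\eta_2$ is slightly cleaner than the paper's bookkeeping, and your parity observation even yields the sharper remainder $O(s^{2})$ in $F_{C,s}-f_C$ (the paper only records $O(s)$).
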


\begin{proof}
Denote $u=-Cs+s\tau=s(\tau-C)$ for convenience. Then $u\rightarrow0$ as
$s\rightarrow0$. Let $\tau \in K$ where $K$ is any compact
subset of $F_{0}$. Then $g_{2}(\tau)$ and $g_{3}(\tau)$ are uniformly bounded
for $\tau \in K$. So it follows from the Laurent series of $\zeta(\cdot|\tau)$ and $\wp(\cdot|\tau)$ that
\begin{equation}
\zeta(-Cs+s\tau|\tau)=\frac{1}{u}-\frac{g_{2}(\tau)}{60}u^{3}+O(|u|^{5}),
\label{i-50}%
\end{equation}%
\[
\wp(-Cs+s\tau|\tau)=\frac{1}{u^{2}}+\frac{g_{2}(\tau)}{20}u^{2}+O(|u|^{4}),
\]%
\[
\wp^{\prime}(-Cs+s\tau|\tau)=\frac{-2}{u^{3}}+\frac{g_{2}(\tau)}%
{10}u+O(|u|^{3}),
\]
hold uniformly for $\tau \in K$ as $s\rightarrow0$. It follows from (\ref{c-7}) that%
\begin{equation}
Z_{-Cs,s}(\tau)=\frac{1}{u}+2\pi is-\eta_{1}u-\frac{g_{2}}{60}u^{3}+O(|u|^{5})
\label{i-51}%
\end{equation}
and so%
\begin{align}
Z_{-Cs,s}^{(2)}(\tau)  &  =Z_{-Cs,s}(\tau)^{3}-3\wp(-Cs+s\tau|\tau
)Z_{-Cs,s}(\tau)-\wp^{\prime}(-Cs+s\tau|\tau)\nonumber \\
&  =\frac{-12\pi^{2}s}{\tau-C}-12\pi i\eta_{1}s+3\eta_{1}^{2}u-\frac{g_{2}}%
{4}u+O(|u|^{2}) \label{i-52}%
\end{align}
uniformly for $\tau \in K$ as $s\rightarrow0$. Consequently, we derive from $u=s(\tau-C)$ and
$\eta_{2}=\tau \eta_{1}-2\pi i$ that
{\allowdisplaybreaks
\begin{align*}
F_{C,s}(\tau)  &  =\frac{4(\tau-C)}{s}Z_{-Cs,s}^{(2)}(\tau)\\
&  =-48\pi^{2}-48\pi i\eta_{1}(\tau-C)+12\eta_{1}^{2}(\tau-C)^{2}-g_{2}%
(\tau-C)^{2}+O(s)\\
&  =12((\tau-C)\eta_{1}-2\pi i)^{2}-g_{2}(\tau-C)^{2}+O(s)\\
&  =12(C\eta_{1}-\eta_{2})^{2}-g_{2}(\tau-C)^{2}+O(s)\rightarrow f_{C}(\tau)
\end{align*}
}%
uniformly for $\tau \in K$ as $s\rightarrow0$. The proof is complete.
\end{proof}

\begin{lemma}
\label{lemma-7}Let $s>0$. Then as $s\rightarrow0$, any zero $\tau(s)\in \{
\tau \in \mathbb{H}|\operatorname{Re}\tau \in \lbrack-1,1]\}$ of $Z_{-Cs,s}^{(2)}(\tau)$ (if exist) is uniformly bounded.
\end{lemma}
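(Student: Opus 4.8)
The plan is to show that a zero $\tau(s)$ cannot escape to the cusp $i\infty$ as $s\to 0^{+}$. Since $\operatorname{Re}\tau(s)\in[-1,1]$ is already bounded, ``uniformly bounded'' is equivalent to an upper bound on $\operatorname{Im}\tau(s)$, so I argue by contradiction: suppose there are $s_{k}\to 0^{+}$ and zeros $\tau_{k}$ of $Z^{(2)}_{-Cs_{k},s_{k}}$ with $\operatorname{Re}\tau_{k}\in[-1,1]$ and $t_{k}:=\operatorname{Im}\tau_{k}\to\infty$. Writing $a:=2\pi is$, $u:=s(\tau-C)$ and $\tilde Z:=\zeta(u|\tau)-u\eta_{1}(\tau)$ (the odd-in-$u$ part, so that $Z_{-Cs,s}=\tilde Z+a$ by (\ref{c-7})), I would use the algebraic identity
\[
Z^{(2)}_{-Cs,s}(\tau)=\tilde Z^{(2)}+3a\big(\tilde Z^{2}-\wp\big)+3a^{2}\tilde Z+a^{3},\qquad \tilde Z^{(2)}:=\tilde Z^{3}-3\wp\tilde Z-\wp',
\]
with $\wp,\wp'$ evaluated at $u$. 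The whole point is that $3a(\tilde Z^{2}-\wp)$ is the genuine leading term, of size $\asymp s$, while every other piece is $o(s)$.

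First I would isolate the main term. A short Laurent computation at $u=0$ gives $\tilde Z^{2}-\wp\to-2\eta_{1}(\tau)$ as $u\to0$, while the $q$-expansions give $\tilde Z^{2}-\wp\to A^{2}-\wp_{0}=-\tfrac{2\pi^{2}}{3}$ as $\operatorname{Im}\tau\to\infty$ with $u$ bounded, where $A=\pi\cot\pi u$ and $\wp_{0}=\tfrac{\pi^{2}}{\sin^{2}\pi u}-\tfrac{\pi^{2}}{3}$ is the $q\to0$ limit of $\wp(u)$. Because $\eta_{1}(\tau)=\tfrac{\pi^{2}}{3}+O(e^{-2\pi t})$ is bounded away from $0$ for $t$ large, in both situations $3a(\tilde Z^{2}-\wp)$ has modulus comparable to $s$ and is nonzero. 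The delicate term is $\tilde Z^{(2)}$, for which I would record two structural facts, both read off from the $q$-expansions of $\zeta,\wp,\wp'$: $\tilde Z^{(2)}$ is \emph{odd} in $u$, and its $q^{0}$-part is the rational function $A^{3}-3\wp_{0}A-\wp_{0}'$ (with $\wp_{0}'$ the $q\to0$ limit of $\wp'(u)$), which a direct cancellation shows is \emph{identically zero}. Consequently $\tilde Z^{(2)}=-48\pi^{3}q\sin(2\pi u)+O(q^{2})$, so $\tilde Z^{(2)}$ is divisible both by $q$ and by a factor that vanishes as $u\to0$.

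The hard part is exactly this comparison: $\tilde Z^{(2)}$ is exponentially small in $t$, but the main term is only of order $s$, so the crude bound $|\tilde Z^{(2)}|=O(e^{-2\pi t})$ does not beat $s$ in the window $t\sim\tfrac{1}{2\pi}\log(1/s)$. I would resolve this by passing to a subsequence with $s_{k}t_{k}\to L\in[0,\infty]$ and splitting into two regimes. If $L=0$ then $u\to0$, so the main term is $-12\pi i\eta_{1}(\tau_{k})s_{k}\asymp s_{k}$; the pieces $3a^{2}\tilde Z\sim\tfrac{-12\pi^{2}s_{k}}{\tau_{k}-C}$ and $a^{3}$ are $o(s_{k})$ because $t_{k}\to\infty$, and the extra factor $\sin(2\pi u)\sim 2\pi u$ forces $\tilde Z^{(2)}=O(|q||u|)=O(e^{-2\pi t_{k}}s_{k}t_{k})=o(s_{k})$ --- this is precisely where vanishing-at-the-cusp together with oddness rescues the estimate. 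If $L>0$ then $t_{k}\ge\delta/s_{k}$ for some $\delta>0$, so $|q|\le e^{-2\pi\delta/s_{k}}$ is super-exponentially small while $|w|=e^{-2\pi s_{k}t_{k}}\le e^{-2\pi\delta}$ keeps $A,\wp_{0}$ bounded; then $\tilde Z^{(2)}=O(e^{-2\pi t_{k}(1-s_{k})})$ and $3a^{2}\tilde Z, a^{3}=O(s_{k}^{2})$ are all $o(s_{k})$, while $3a(\tilde Z^{2}-\wp)=-4\pi^{3}is_{k}(1+o(1))$.

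In either regime this yields $Z^{(2)}_{-Cs_{k},s_{k}}(\tau_{k})=-4\pi^{3}is_{k}(1+o(1))\neq0$ for $k$ large, contradicting $Z^{(2)}_{-Cs_{k},s_{k}}(\tau_{k})=0$; hence $t_{k}$ must stay bounded and the zeros are uniformly bounded. The one place demanding care is the uniformity of the $q$-expansion remainders in $u$ (equivalently in $w=e^{2\pi iu}$) across the two regimes; this is controlled because every $q^{m}$-coefficient of $\zeta,\wp,\wp'$ carries a factor $w^{-m}$ with $|w^{-m}|=e^{2\pi m s t}$ dominated by $|q|^{m}=e^{-2\pi mt}$ as soon as $t(1-s)$ is large, so the series converge with remainders of order $e^{-2\pi t(1-s)}$ uniformly.
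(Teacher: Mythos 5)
Your argument is correct, and it establishes the same contradiction as the paper --- $Z^{(2)}_{-Cs,s}(\tau(s))=-4\pi^{3}is(1+o(1))\neq0$ along any escaping sequence --- but the route is organized genuinely differently. The paper splits cases by comparing $|x^{-1}q|$ with $s|1-x|^{2}$ (where $x=e^{2\pi iu}$): in its Case 1 it computes directly with the truncated $q$-expansions and lets the trigonometric cancellation happen inside an unnamed ``straightforward computation'', while in its Case 2 it deduces $b(s)\leq\log\tfrac{1}{s}$, hence $u\to0$ with $s=o(|u|)$ and $u^{2}=o(s)$, and re-uses the Laurent expansion (\ref{i-52}), killing the dangerous linear term via $3\eta_{1}^{2}-\tfrac{g_{2}}{4}=O(|q|)=O(|\tau-C|^{-2})$. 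You instead split according to the limit $L$ of $s_{k}t_{k}$ and, more substantively, expand in powers of $a=2\pi is$, which isolates the main term $3a(\tilde Z^{2}-\wp)$ structurally and concentrates all the danger in $\tilde Z^{(2)}$, which you then control by two clean facts: the identity $A^{3}-3\wp_{0}A-\wp_{0}'\equiv0$ (vanishing of the $q^{0}$-part) and oddness in $u$ (each $q$-coefficient carries a factor $O(|u|)$, so $\tilde Z^{(2)}=O(|q|\,|u|)=o(s)$ exactly in the critical window $t\sim\tfrac{1}{2\pi}\log\tfrac{1}{s}$, where the crude $O(|q|)$ would not beat $s$ --- you correctly identify this as the crux). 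These are two packagings of the same cancellation: your identity is precisely what underlies the paper's $3\eta_{1}^{2}-\tfrac{g_{2}}{4}=O(q)$ and its Case-1 computation, but you make it explicit and verify it once instead of re-deriving it in each regime; your two regimes also cover everything the paper's dichotomy does (e.g.\ $t=\log^{2}\tfrac{1}{s}$ falls in the paper's Case 1 but in your $L=0$ regime, where your $O(|q|\,|u|)$ bound still suffices). The one step you leave schematic is the uniformity in $u$ of the full $q$-tail of $\tilde Z^{(2)}$: to sum $\sum_{m\geq2}c_{m}(u)q^{m}$ one needs a bound of the shape $|c_{m}(u)|\lesssim\mathrm{poly}(m)\,e^{Cmst}\,|u|$ for the odd coefficients (obtainable from Cauchy estimates on the product expansions), but you flag this and give the correct mechanism, which is at the same level of rigor as the paper's own $o(s|1-x|^{2})$ remainder bookkeeping.
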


\begin{proof}
Suppose by contradiction that up to a subsequence of $s\rightarrow0$,
$Z_{-Cs,s}^{(2)}(\tau)$ has a zero $\tau(s)\in \{ \tau \in \mathbb{H}%
|\operatorname{Re}\tau \in \lbrack-1,1]\}$ such that $\tau(s)\rightarrow \infty$
as $s\rightarrow0$. Write $\tau=\tau(s)=a(s)+ib(s)$, then $a(s)\in
\lbrack-1,1]$ and $b(s)\rightarrow+\infty$.

Denote $q=e^{2\pi i\tau}$ as before. We recall the $q$-expansions (cf.
\cite[p.46]{Lang} for $\wp$ and \cite[(5.3)]{CKLW} for $Z_{r,s}$): for $|q|<|e^{2\pi iz}|<|q|^{-1}$,%
\begin{equation}
\frac{\wp(z|\tau)}{-4\pi^{2}}=\frac{1}{12}+\frac{e^{2\pi iz}}{(1-e^{2\pi
iz})^{2}}+\sum_{m=1}^{\infty}\sum_{n=1}^{\infty}nq^{nm}(e^{2\pi inz}+e^{-2\pi
inz}-2), \label{q-e1}%
\end{equation}%
{\allowdisplaybreaks
\begin{align*}
\frac{\wp^{\prime}(z|\tau)}{-4\pi^{2}} = &\frac{2\pi ie^{2\pi iz}%
}{(1-e^{2\pi iz})^{2}}+\frac{4\pi ie^{4\pi iz}}{(1-e^{2\pi iz})^{3}}\\
&  +2\pi i\sum_{m=1}^{\infty}\sum_{n=1}^{\infty}n^{2}q^{nm}(e^{2\pi
inz}-e^{-2\pi inz}),
\end{align*}
}%
\begin{align}
Z_{r,s}(\tau)  =&2\pi is-\pi i\frac{1+e^{2\pi iz}}{1-e^{2\pi iz}}\nonumber \\
&  -2\pi i\sum_{n=1}^{\infty}\left(  \frac{e^{2\pi iz}q^{n}}{1-e^{2\pi
iz}q^{n}}-\frac{e^{-2\pi iz}q^{n}}{1-e^{-2\pi iz}q^{n}}\right)  , \label{zz}%
\end{align}
where $z=r+s\tau$ in (\ref{zz}). We will also use the $q$-expansion of $g_{2}(\tau)$ (cf. \cite[p.44]{Lang}):
\begin{align}
g_{2}(\tau)  =\frac{4}{3}\pi^{4}+320\pi^{4}\sum_{k=1}^{\infty}\sigma
_{3}(k)q^{k},\;\,\text{where }\sigma_{3}(k)=\sum_{1\leq d|k}d^{3}.\label{ex-2}
\end{align}

Now we let $z=-Cs+s\tau=s(a(s)-C+ib(s))$ and denote $x=e^{2\pi iz}$ for
convenience. Then%
\[
e^{2\pi b(s)}=|q|^{-1}>|x|=e^{-2\pi sb(s)}>|q|=e^{-2\pi b(s)},
\]
so we can apply the above $q$-expansions. Notice that $|x|\in(0,1)$ and
$|x^{-1}q|=e^{-2\pi(1-s)b(s)}\rightarrow0$ as $s\rightarrow0$. There are two cases.

\textbf{Case 1.} Up to a subsequence $|x^{-1}q|=o(s|1-x|^{2})$.

Then we derive from (\ref{q-e1})-(\ref{zz}) that
\begin{equation}
\frac{\wp(z|\tau)}{-4\pi^{2}}=\frac{1}{12}+\frac{x}{(1-x)^{2}}+o(s|1-x|^{2}),
\label{zz-1}%
\end{equation}%
\[
\frac{\wp^{\prime}(z|\tau)}{-4\pi^{2}}=\frac{2\pi ix}{(1-x)^{2}}+\frac{4\pi
ix^{2}}{(1-x)^{3}}+o(s|1-x|^{2}),
\]%
\begin{equation}
Z_{-Cs,s}(\tau)=-\pi i\frac{1+x}{1-x}+2\pi is+o(s|1-x|^{2}). \label{zz-10}%
\end{equation}
We note that if $x\rightarrow1$, then $sb(s)\rightarrow0$ and%
\begin{align*}
1-x  &  =1-e^{2\pi is(a(s)-C+ib(s))}\\
&  =-2\pi is(a(s)-C+ib(s))+o(sb(s)).
\end{align*}
Together with $b(s)\rightarrow+\infty$ as $s\rightarrow0$, we always have
\begin{equation}
\frac{s^{2}}{1-x}=o(s). \label{zz-2}%
\end{equation}
Then by (\ref{zz-1})--(\ref{zz-10}) and (\ref{zz-2}), a straightforward
computation gives%
\begin{align*}
0  &  =Z_{-Cs,s}^{(2)}(\tau(s))=Z_{-Cs,s}(\tau)^{3}-3\wp(z|\tau)Z_{-Cs,s}%
(\tau)-\wp^{\prime}(z|\tau)\\
&  =-4\pi^{3}is+o(s),
\end{align*}
which is a contradiction.

\textbf{Case 2.} Up to a subsequence $|x^{-1}q|\geq ds|1-x|^{2}$ for some
constant $d>0$.

Then we see from (\ref{zz-2}) that%
\[
e^{-2\pi(1-s)b(s)}=|x^{-1}q|\geq ds|1-x|^{2}\geq s^{3}%
\]
and so $b(s)\leq \ln \frac{1}{s}$ for $s>0$ small. Then $u:=s(\tau
-C)=s(a(s)-C+ib(s))\rightarrow0$ and
\[
s=o(|u|),\text{ \ }u^{2}=o(s)\text{.}%
\]
Recall $q=e^{2\pi i\tau}$. Since $b(s)\rightarrow+\infty$, (\ref{ex-1}%
) and (\ref{ex-2}) show that
\[
g_{2}(\tau)=\frac{4}{3}\pi^{4}+O(|q|),\text{ \ }\eta_{1}(\tau)=\frac{1}{3}%
\pi^{2}+O(|q|)
\]
are uniformly bounded, so (\ref{i-50})-(\ref{i-52}) still hold, namely%
\begin{align*}
0   =Z_{-Cs,s}^{(2)}(\tau(s))
  =-12\pi i\eta_{1}s-\frac{12\pi^{2}s}{\tau-C}+3\eta_{1}^{2}u-\frac{g_{2}}%
{4}u+O(|u|^{2}).
\end{align*}
Since%
\[
3\eta_{1}^{2}-\frac{g_{2}}{4}=O(|q|)=O(e^{-2\pi b(s)})=O\left(  |\tau
-C|^{-2}\right)  ,
\]
we have
\[
\frac{-12\pi^{2}s}{\tau-C}+ 3\eta_{1}^{2}u-\frac{g_{2}}{4}u=O\left(\tfrac{s}{|\tau
-C|}\right)=o(s).
\]
Therefore, we finally obtain%
\begin{align*}
0  &  =-12\pi i\eta_{1}s-\frac{12\pi^{2}s}{\tau-C}+3\eta_{1}^{2}u-\frac{g_{2}%
}{4}u+O(|u|^{2})\\
&  =-4\pi^{3}is+o(s),
\end{align*}
which is a contradiction. The proof is complete.
\end{proof}

Recall $\triangle_{k}, k=0,1,2,3$, defined in (\ref{rectangle}). We define%
\begin{align*}
\tilde{\triangle}_{1}:=&   \{  (r,s)| (r+1,s)\in \triangle_{1} \}\\
=&\left \{  (r,s)|0<s<\tfrac{1}{2},\tfrac{-1}{2}<r<0,r+s>0\right \},
\end{align*}
\begin{align*}
\tilde{\triangle}_{2}:=& \{  (r,s)| (r+1,s)\in \triangle_{2} \}\\
=&\left \{  (r,s)|0<s<\tfrac{1}{2},\tfrac{-1}{2}<r<0,r+s<0\right \}.
\end{align*}
Since property (i) in Section \ref{modularform} gives $Z_{r,s}^{(2)}(\tau)=Z_{r+1,s}^{(2)}(\tau)$, we see
from Theorem \ref{thm2} that%
\begin{equation}
\text{For }(r,s)\in \tilde{\triangle}_{1}\cup \tilde{\triangle}_{2}\cup\triangle_{3}\text{, }%
Z_{r,s}^{(2)}(\tau)\text{ has a unique zero in }F_{0}\text{.} \label{cc-13}%
\end{equation}

From now on, we fix $C\in(-\infty,0)\cup(0,1)\cup(1,\infty)$. Then
$s\in(0,\frac{1}{4(1+|C|)^{2}})$ implies $(-Cs,s)\in \tilde{\triangle}_{1}\cup \tilde{\triangle}_{2}\cup\triangle_{3}$, so (\ref{cc-13}) implies that $Z_{-Cs,s}^{(2)}(\tau)$ has a
unique zero $\tau(s)\in F_{0}$. By the definition of $F_{0}$, we easily see
that%
\[
\frac{-1}{\tau(s)},\frac{\tau(s)}{1-\tau(s)}\in \{ \tau \in \mathbb{H}%
|\operatorname{Re}\tau \in \lbrack-1,1]\}.
\]

\begin{lemma}
\label{lemma-8}As $s\rightarrow0$, the unique zero $\tau(s)\in F_{0}$ of
$Z_{-Cs,s}^{(2)}(\tau)$ can not converge to any of $\{0,1,\infty \}$.
\end{lemma}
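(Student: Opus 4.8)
The plan is to rule out the three cusps $\infty$, $0$, $1$ one at a time, reducing the two finite cusps $0$ and $1$ to the already-settled cusp $\infty$ by a modular change of variable. The cusp $\infty$ is immediate: since $\tau(s)\in F_0$ we have $\operatorname{Re}\tau(s)\in[0,1]\subset[-1,1]$, and $\tau(s)$ is by construction a zero of $Z_{-Cs,s}^{(2)}$, so Lemma \ref{lemma-7} (whose $q$-expansion argument is valid for any fixed real constant in place of $C$) forces $\tau(s)$ to remain uniformly bounded as $s\to0$. Hence $\tau(s)\not\to\infty$.

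For the cusp $0$, I would suppose $\tau(s)\to0$ and pass to $\tilde\tau(s):=-1/\tau(s)$, which then tends to $\infty$ and, as already observed in the text, lies in the strip $\{\operatorname{Re}\tau\in[-1,1]\}$. Applying property (ii) with $\gamma=\begin{pmatrix}0&-1\\1&0\end{pmatrix}$ gives $Z_{s,Cs}^{(2)}(-1/\tau)=\tau^{3}Z_{-Cs,s}^{(2)}(\tau)$, so $\tilde\tau(s)$ is a zero of $Z_{s,Cs}^{(2)}$. Using property (i) to flip signs and thereby normalize the second parameter to be positive, this function coincides up to sign with $Z_{-C''\sigma,\sigma}^{(2)}$, where $\sigma=|C|s\to0^{+}$ and $C''=-1/C$; here the standing hypothesis $C\neq0$ is exactly what makes $C''$ a finite real constant. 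Lemma \ref{lemma-7}, now applied with the fixed constant $C''$, bounds $\tilde\tau(s)$, contradicting $\tilde\tau(s)\to\infty$.

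The cusp $1$ is handled in the same spirit. Supposing $\tau(s)\to1$, I would pass to $\hat\tau(s):=\tau(s)/(1-\tau(s))\to\infty$, which again lies in the strip. The transformation $\gamma=\begin{pmatrix}1&0\\-1&1\end{pmatrix}$ in property (ii) yields $Z_{-Cs,s(1-C)}^{(2)}(\tau/(1-\tau))=(1-\tau)^{3}Z_{-Cs,s}^{(2)}(\tau)$, so $\hat\tau(s)$ is a zero of $Z_{-Cs,s(1-C)}^{(2)}$; after the analogous normalization via property (i) this becomes, up to sign, $Z_{-C'''\hat\sigma,\hat\sigma}^{(2)}$ with $\hat\sigma=|1-C|s\to0^{+}$ and $C'''=C/(1-C)$, which is finite precisely because $C\neq1$. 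A final application of Lemma \ref{lemma-7} with constant $C'''$ bounds $\hat\tau(s)$ and yields the contradiction.

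The only genuinely delicate points are the parameter bookkeeping—computing $(s',r')=(s,r)\gamma^{-1}$ for each $\gamma$ and then using the period/sign relations of property (i) to restore the canonical shape $Z_{-(\text{const})\cdot s',\,s'}^{(2)}$ with $s'>0$—together with checking that the new constants $C''=-1/C$ and $C'''=C/(1-C)$ stay finite. This finiteness is the structural reason the cases $C\in\{0,1\}$ must be excluded at this point, consistent with Theorem \ref{Unique-pole}(2), where these values are shown to yield no zero at all. Once the two reductions are in place, all the analytic content is supplied by Lemma \ref{lemma-7}, so no further estimates are needed here.
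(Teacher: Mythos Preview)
Your proposal is correct and follows essentially the same approach as the paper: rule out $\infty$ directly by Lemma~\ref{lemma-7}, then reduce the cusps $0$ and $1$ to the cusp $\infty$ via the same modular transformations $\gamma=\begin{pmatrix}0&-1\\1&0\end{pmatrix}$ and $\gamma=\begin{pmatrix}1&0\\-1&1\end{pmatrix}$, re-applying Lemma~\ref{lemma-7} with the new constants $-1/C$ and $C/(1-C)$. The only cosmetic difference is that the paper splits into sign cases according to the sign of $C$ and of $1-C$, whereas you absorb this into $\sigma=|C|s$ and $\hat\sigma=|1-C|s$.
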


\begin{proof}
Lemma \ref{lemma-7} shows that $\tau(s)\not \rightarrow \infty$. To prove
$\tau(s)\not \rightarrow \{0,1\}$, we use the modular property (ii) of
$Z_{r,s}^{(2)}(\tau)$ in Section \ref{modularform}:%
\[
Z_{r^{\prime},s^{\prime}}^{(2)}(\tau^{\prime})=(c\tau+d)^{3}Z_{r,s}^{(2)}%
(\tau),
\]
whenever
\[
\tau^{\prime}=\frac{a\tau+b}{c\tau+d}\text{ and }(s^{\prime},r^{\prime})=(s,r)%
\begin{pmatrix}
d & -b\\
-c & a
\end{pmatrix}
,\text{ }%
\begin{pmatrix}
a & b\\
c & d
\end{pmatrix}
\in SL(2,\mathbb{Z}).
\]
We also use property (i):
\begin{equation}
Z_{m\pm r,n\pm s}^{(2)}(\tau)=\pm Z_{r,s}^{(2)}(\tau),\text{ }\forall
m,n\in \mathbb{Z}\text{.} \label{c-10}%
\end{equation}

Letting $%
\begin{pmatrix}
a & b\\
c & d
\end{pmatrix}
=%
\begin{pmatrix}
0 & -1\\
1 & 0
\end{pmatrix}
$, we obtain%
\begin{equation}
Z_{s,-r}^{(2)}(\tfrac{-1}{\tau})=\tau^{3}Z_{r,s}^{(2)}(\tau). \label{c-101}%
\end{equation}
Recall $C\in(-\infty,0)\cup(0,1)\cup(1,+\infty)$ and $s\in(0,\frac
{1}{4(1+|C|)^{2}})$.

\textbf{Case 1. }$C\in(-\infty,0)$.

By defining%
\[
\tilde{C}:=\frac{-1}{C},\text{ \  \ }\tilde{s}:=-Cs,
\]
we have $\tilde{s}\in(0,\frac{1}{4(1+|\tilde{C}|)^{2}})$ for $s$ small and%
\[
\tau^{3}Z_{-Cs,s}^{(2)}(\tau)=Z_{s,Cs}^{(2)}(\tfrac{-1}{\tau})=-Z_{-s,-Cs}%
^{(2)}(\tfrac{-1}{\tau})=-Z_{-\tilde{C}\tilde{s},\tilde{s}}^{(2)}(\tfrac
{-1}{\tau}).
\]
Therefore, $Z_{-\tilde{C}\tilde{s},\tilde{s}}^{(2)}(\tau)$ has zero $\frac
{-1}{\tau(s)}\in \{ \tau \in \mathbb{H}|\operatorname{Re}\tau \in \lbrack-1,1]\}$.
Since $\tilde{s}\rightarrow0$ as $s\rightarrow0$, Lemma \ref{lemma-7} implies
$\frac{-1}{\tau(s)}\not \rightarrow \infty$, i.e. $\tau(s)\not \rightarrow 0$
as $s\rightarrow0$.

\textbf{Case 2. }$C\in(0,1)\cup(1,+\infty)$.

By defining%
\[
\tilde{C}:=\frac{-1}{C},\text{ \  \ }\tilde{s}:=Cs,
\]
we have $\tilde{s}\in(0,\frac{1}{4(1+|\tilde{C}|)^{2}})$ for $s$ small and%
\[
\tau^{3}Z_{-Cs,s}^{(2)}(\tau)=Z_{s,Cs}^{(2)}(\tfrac{-1}{\tau})=Z_{-\tilde
{C}\tilde{s},\tilde{s}}^{(2)}(\tfrac{-1}{\tau}).
\]
Again we obtain $\tau(s)\not \rightarrow 0$ as $s\rightarrow0$.

Therefore, we have proved $\tau(s)\not \rightarrow 0$ as $s\rightarrow0$.

Finally, to prove $\tau(s)\not \rightarrow 1$ as $s\rightarrow0$, we let $%
\begin{pmatrix}
a & b\\
c & d
\end{pmatrix}
=%
\begin{pmatrix}
1 & 0\\
-1 & 1
\end{pmatrix}
$ and obtain
\begin{equation}
Z_{r,r+s}^{(2)}(\tfrac{\tau}{1-\tau})=(1-\tau)^{3}Z_{r,s}^{(2)}(\tau).
\label{c-102}%
\end{equation}

\textbf{Case 3. }$C\in(-\infty,0)\cup(0,1)$.

By defining%
\[
\tilde{C}:=\frac{C}{1-C},\text{ \  \ }\tilde{s}:=(1-C)s,
\]
we have $\tilde{s}\in(0,\frac{1}{4(1+|\tilde{C}|)^{2}})$ for $s$ small and%
\[
(1-\tau)^{3}Z_{-Cs,s}^{(2)}(\tau)=Z_{-Cs,(1-C)s}^{(2)}(\tfrac{\tau}{1-\tau
})=Z_{-\tilde{C}\tilde{s},\tilde{s}}^{(2)}(\tfrac{\tau}{1-\tau}).
\]
So $Z_{-\tilde{C}\tilde{s},\tilde{s}}^{(2)}(\tau)$ has zero $\frac{\tau
(s)}{1-\tau(s)}\in \{ \tau \in \mathbb{H}|\operatorname{Re}\tau \in \lbrack
-1,1]\}$, and Lemma \ref{lemma-7} implies $\frac{\tau(s)}{1-\tau
(s)}\not \rightarrow \infty$, i.e. $\tau(s)\not \rightarrow 1$ as
$s\rightarrow0$.

\textbf{Case 4. }$C\in(1,+\infty)$.

By defining%
\[
\tilde{C}:=\frac{C}{1-C},\text{ \  \ }\tilde{s}:=-(1-C)s,
\]
we have $\tilde{s}\in(0,\frac{1}{4(1+|\tilde{C}|)^{2}})$ for $s$ small and%
\begin{align*}
(1-\tau)^{3}Z_{-Cs,s}^{(2)}(\tau)  &  =Z_{-Cs,(1-C)s}^{(2)}(\tfrac{\tau
}{1-\tau})\\
&  =-Z_{Cs,-(1-C)s}^{(2)}(\tfrac{\tau}{1-\tau})=-Z_{-\tilde{C}\tilde{s}%
,\tilde{s}}^{(2)}(\tfrac{\tau}{1-\tau}).
\end{align*}
Again we obtain $\tau(s)\not \rightarrow 1$ as $s\rightarrow0$.

The proof is complete.
\end{proof}

Now we are in a position to prove Theorem \ref{Unique-pole}-(1).

\begin{proof}
[Proof of Theorem \ref{Unique-pole}-(1)]Fix $C\in \mathbb{R}\backslash \{0,1\}$ and
let $s\in(0,\frac{1}{4(1+|C|)^{2}})$. Recall that $\tau(s)$ is the unique zero
of $Z_{-Cs,s}^{(2)}(\tau)$ in $F_{0}$. By Lemma \ref{lemma-8}, up to a
subsequence of $s\to 0$, we have
\begin{equation}\label{limit}
\tau(C):=\lim_{s\rightarrow0}\tau(s)\in \bar{F}_{0}\cap \mathbb{H=}F_{0}.
\end{equation}
Recalling%
\[
F_{C,s}(\tau)=\frac{4(\tau-C)}{s}Z_{-Cs,s}^{(2)}(\tau),
\]
we have $F_{C,s}(\tau(s))=0$. Then Lemma \ref{lemma-9} implies $f_{C}%
(\tau(C))=0$, namely $f_{C}(\tau)$ has a zero $\tau(C)\in F_{0}$. Applying
Lemma \ref{lemma-10}, we have $\tau(C)\in \mathring{F}_{0}$. Suppose
$f_{C}(\tau)$ has another zero $\tau_{1}\not =\tau(C)$ in $\mathring{F}_{0}$.
Since $F_{C,s}(\tau)$ and $f_{C}(\tau)$ are all holomorphic functions, it
follows from Lemma \ref{lemma-9} and Rouch\'{e}'s theorem that
$F_{C,s}(\tau)$ has a zero $\tau_{1}(s)$ satisfying $\tau_{1}(s)\rightarrow
\tau_{1}$ as $s\rightarrow0$, namely $Z_{-Cs,s}^{(2)}(\tau)$ has two different
zeros $\tau(s)$ and $\tau_{1}(s)$ in $F_{0}$ when $s>0$ small, a contradiction
with (\ref{cc-13}). Therefore, $\tau(C)$ is the unique zero of $f_{C}(\tau)$ in
$F_{0}$. This also implies that (\ref{limit}) actually holds for $s\to 0$ (i.e. not only for a subsequence). The proof is complete.
\end{proof}

The proof of Theorem \ref{Unique-pole}-(2) will be postponed in the next section.
As in Theorem \ref{Unique-pole}-(1), we always denote by $\tau(C)$
the unique zero of $f_{C}(\tau)$ in $F_{0}$. Since we proved in \cite[Theorem 1.1]{CKL2} that for fixed $C$,
\begin{equation}\label{simplezero}\text{\emph{$f_{C}(\tau)$ has at most simple zeros on $\mathbb{H}$},}\end{equation} the implicit function theorem infers that $\tau(C)$ is a \emph{smooth}
function of $C\in \mathbb{R}\backslash \{0,1\}$.  We conclude this section by proving basic properties of $\tau(C)$.

\begin{lemma}
\label{lemma-13}The smooth function $\tau(C)$ satisfies%
\begin{equation}
\tau \left(  \frac{1}{1-C}\right)  =\frac{1}{1-\tau(C)},\text{ \ }\forall
C\in \mathbb{R}\backslash \{0,1\}. \label{i-65}%
\end{equation}

\end{lemma}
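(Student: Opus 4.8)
The plan is to exploit an exact equivariance of $f_C$ under the element $\gamma_1=\begin{pmatrix}0&1\\-1&1\end{pmatrix}\in SL(2,\mathbb{Z})$ and then invoke the uniqueness in Theorem~\ref{Unique-pole}-(1). The key observation is that the map $\tau\mapsto\frac{1}{1-\tau}$ on $\mathbb{H}$ and the map $C\mapsto\frac{1}{1-C}$ on the parameter are induced by the \emph{same} matrix $\gamma_1$ (indeed $\gamma_1\cdot\tau=\frac{1}{1-\tau}$). So (\ref{i-65}) is nothing but the equivariance $\tau(\gamma_1\cdot C)=\gamma_1\cdot\tau(C)$, and it will suffice to show that $\gamma_1\cdot\tau(C)$ is a zero of $f_{1/(1-C)}$ lying in $F_0$.

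First I would record a transformation law for $f_C$. Writing $\tau'=\frac{1}{1-\tau}$, $C'=\frac{1}{1-C}$ and applying (\ref{II-30-00})--(\ref{II-31-1-00}) with $\gamma=\gamma_1$ (so $c\tau+d=1-\tau$) gives
\[
\eta_1(\tau')=(1-\tau)(\eta_1(\tau)-\eta_2(\tau)),\quad \eta_2(\tau')=(1-\tau)\eta_1(\tau),\quad g_2(\tau')=(1-\tau)^4 g_2(\tau).
\]
Substituting these into the definition (\ref{i-54}) --- a routine computation of the same shape as the one in Case~2 of the proof of Lemma~\ref{lemma-10} --- I expect the clean factorization
\[
f_{1/(1-C)}\!\left(\tfrac{1}{1-\tau}\right)=\frac{(1-\tau)^2}{(1-C)^2}\,f_C(\tau),
\]
coming from $C'\eta_1(\tau')-\eta_2(\tau')=\frac{1-\tau}{1-C}\bigl(C\eta_1(\tau)-\eta_2(\tau)\bigr)$ and $C'-\tau'=\frac{C-\tau}{(1-C)(1-\tau)}$. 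Since $\tau\in\mathbb{H}$ forces $1-\tau\neq0$, this will show that $\frac{1}{1-\tau(C)}$ is a zero of $f_{1/(1-C)}$.

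The step I expect to need the most care is checking that this zero again lies in $F_0$, so that uniqueness applies. Here I would use (\ref{F-de}), namely $F_0=F\cup\gamma_1(F)\cup\gamma_2(F)$, together with $\gamma_2=\gamma_1^{-1}$ and $\gamma_1^3=-I$ (so $\gamma_1$ acts on $\mathbb{H}$ with order three and $\gamma_1^2\cdot\tau=\gamma_2\cdot\tau$). These give
\[
\gamma_1(F_0)=\gamma_1(F)\cup\gamma_1^2(F)\cup\gamma_1^3(F)=\gamma_1(F)\cup\gamma_2(F)\cup F=F_0,
\]
so $\gamma_1$ maps $F_0$ bijectively onto itself; being a homeomorphism of $\mathbb{H}$, it carries $\mathring{F}_0$ onto $\mathring{F}_0$. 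As $\tau(C)\in\mathring{F}_0$, I conclude $\frac{1}{1-\tau(C)}=\gamma_1\cdot\tau(C)\in\mathring{F}_0$.

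Finally, since $C\in\mathbb{R}\setminus\{0,1\}$ implies $\frac{1}{1-C}\in\mathbb{R}\setminus\{0,1\}$, Theorem~\ref{Unique-pole}-(1) provides a \emph{unique} zero of $f_{1/(1-C)}$ in $F_0$; having exhibited $\frac{1}{1-\tau(C)}$ as one, it must equal $\tau(\frac{1}{1-C})$, which is exactly (\ref{i-65}). Apart from this appeal to uniqueness and the substitution computation, the only genuinely geometric ingredient is the identity $\gamma_1(F_0)=F_0$, which I regard as the heart of the argument.
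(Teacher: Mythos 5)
Your proposal is correct and takes essentially the same route as the paper's own proof: the identical transformation law $f_{1/(1-C)}\bigl(\tfrac{1}{1-\tau}\bigr)=\tfrac{(1-\tau)^{2}}{(1-C)^{2}}f_{C}(\tau)$, obtained from (\ref{c-16}) and $g_{2}(\tau')=(1-\tau)^{4}g_{2}(\tau)$, followed by the appeal to uniqueness in Theorem \ref{Unique-pole}-(1). The only difference is cosmetic: you justify the equivalence $\tau\in F_{0}\Leftrightarrow \tfrac{1}{1-\tau}\in F_{0}$ carefully via (\ref{F-de}) and $\gamma_{1}^{3}=-I_{2}$, a point the paper's proof simply asserts as easy in (\ref{c-15}).
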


\begin{proof}
Let $\tau^{\prime}=\frac{1}{1-\tau}$ and $C^{\prime}=\frac{1}{1-C}$, then it
easy to prove that%
\begin{equation}
\tau^{\prime}\in F_{0}\Longleftrightarrow \tau \in F_{0}\text{ and }C^{\prime
}\in \mathbb{R}\backslash \{0,1\} \Longleftrightarrow C\in \mathbb{R}%
\backslash \{0,1\}. \label{c-15}%
\end{equation}
By using $g_{2}(\tau^{\prime})=(1-\tau)^{4}g_{2}(\tau)$ and%
\begin{equation}
\eta_{2}(\tau^{\prime})=(1-\tau)\eta_{1}(\tau),\text{ }\eta_{1}(\tau^{\prime
})=(1-\tau)(\eta_{1}(\tau)-\eta_{2}(\tau)), \label{c-16}%
\end{equation}
a straightforward computation leads to%
\[
f_{C^{\prime}}(\tau^{\prime})=\frac{(1-\tau)^{2}}{(1-C)^{2}}f_{C}(\tau).
\]
So $f_{C}(\tau(C))=0$ gives $f_{C^{\prime}}(\frac{1}{1-\tau(C)})=0$. Applying
Theorem \ref{Unique-pole}-(1), we obtain (\ref{i-65}). This completes the proof.
\end{proof}

\begin{lemma}
\label{lemma-11}Write $\tau(C)=a(C)+b(C)i$ with $a(C),b(C)\in \mathbb{R}$. Then%
\begin{equation}
b(C)\rightarrow+\infty,\text{ }a(C)\rightarrow \left \{
\begin{array}
[c]{c}%
1/4\text{ \ if }C\rightarrow+\infty,\\
3/4\text{ \ if }C\rightarrow-\infty,
\end{array}
\right.  \label{i-63}%
\end{equation}%
\begin{equation}
\tau(C)\rightarrow0\text{ as }C\rightarrow0\text{ \ and \ }\tau(C)\rightarrow
1\text{ as }C\rightarrow1\text{.} \label{i-64}%
\end{equation}

\end{lemma}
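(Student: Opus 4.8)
The plan is to prove \eqref{i-63} by locating $\tau(C)$ \emph{explicitly} near the cusp $\infty$ for $|C|$ large, and then to deduce \eqref{i-64} from \eqref{i-63} together with the symmetry in Lemma \ref{lemma-13}.

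First I would rewrite $f_C$ in a form adapted to the cusp. Using $\eta_2=\tau\eta_1-2\pi i$ one has $C\eta_1-\eta_2=(C-\tau)\eta_1+2\pi i$, so with $w:=C-\tau$,
\begin{equation*}
f_C(\tau)=\bigl(12\eta_1(\tau)^2-g_2(\tau)\bigr)w^2+48\pi i\,\eta_1(\tau)\,w-48\pi^2 .
\end{equation*}
Inserting the $q$-expansions \eqref{ex-1} and \eqref{ex-2} (with $q=e^{2\pi i\tau}$) gives $\eta_1=\tfrac{\pi^2}{3}(1+O(q))$ and $12\eta_1^2-g_2=-384\pi^4 q\,(1+O(q))$ as $\operatorname{Im}\tau\to\infty$. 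Hence, after dividing by the linear coefficient $16\pi^3 i\,w$, the equation $f_C(\tau)=0$ becomes $1+24\pi i\,qw+(\text{lower order})=0$, whose leading balance is the single transcendental relation $qw=\tfrac{i}{24\pi}$, i.e. $e^{2\pi i\tau}(C-\tau)=\tfrac{i}{24\pi}$. Writing $\tau=\tfrac14+it$ for $C\to+\infty$ (resp. $\tau=\tfrac34+it$ for $C\to-\infty$) one checks $e^{2\pi i\tau}\approx \pm i\,e^{-2\pi t}$ and $C-\tau\approx C$, so the relation forces $e^{2\pi t}\approx 24\pi|C|$, i.e.
\begin{equation*}
\tau\approx \tfrac14+\tfrac{i}{2\pi}\log(24\pi C)\ \ (C\to+\infty),\qquad
\tau\approx \tfrac34+\tfrac{i}{2\pi}\log(24\pi|C|)\ \ (C\to-\infty).
\end{equation*}

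To turn this heuristic into a proof I would apply Rouch\'e's theorem on a small disk centred at the above ansatz, comparing $f_C$ with its leading part $16\pi^3 i\,w\,(1+24\pi i\,qw)$ and using the $q$-expansion bounds (which are uniform there because $\operatorname{Im}\tau\to\infty$); this shows $f_C$ has exactly one zero $\tau^*(C)$ in that disk, lying in $\mathring F_0$ for $|C|$ large, with $\operatorname{Im}\tau^*(C)\to+\infty$ and $\operatorname{Re}\tau^*(C)\to \tfrac14$ (resp. $\tfrac34$). By Theorem \ref{Unique-pole}-(1), $f_C$ has a \emph{unique} zero in $F_0$; therefore $\tau(C)=\tau^*(C)$, which is exactly \eqref{i-63}. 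I emphasise that uniqueness is what makes the argument short: it removes any need to analyse the other a priori accumulation possibilities (boundary points of $F_0$, the cusps $0,1$, or interior points where $12\eta_1^2=g_2$), since a second zero in $F_0$ would contradict Theorem \ref{Unique-pole}-(1).

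Finally, \eqref{i-64} follows from \eqref{i-63} and Lemma \ref{lemma-13}, since the M\"obius map $C\mapsto \tfrac{1}{1-C}$ cyclically permutes the cusps $0\to1\to\infty\to0$. Applying $\tau(\tfrac{1}{1-C})=\tfrac{1}{1-\tau(C)}$ as $C\to+\infty$ (so $\tfrac{1}{1-C}\to0^-$) and as $C\to-\infty$ (so $\tfrac{1}{1-C}\to0^+$), and using that $\tau(C)\to\infty$ makes $\tfrac{1}{1-\tau(C)}\to0$, yields $\tau(C)\to0$ as $C\to0$; applying the same identity once more as $C\to0$ (so $\tfrac{1}{1-C}\to1$ and $\tfrac{1}{1-\tau(C)}\to\tfrac{1}{1-0}=1$) yields $\tau(C)\to1$ as $C\to1$. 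The main obstacle is the Rouch\'e step of the third paragraph: one must control the transcendental $q$-dependence of the error terms uniformly as $|C|\to\infty$ while the centre of the disk itself drifts to the cusp like $\tfrac{i}{2\pi}\log(24\pi|C|)$, and verify that the resulting zero genuinely lies in $\mathring F_0$ so that Theorem \ref{Unique-pole}-(1) applies.
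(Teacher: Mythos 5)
Your proposal is correct and follows essentially the same route as the paper: the paper also identifies $\tau(C)$ for $|C|$ large by the cusp expansion $C=\phi_-(\tau)=\tau+\frac{i}{24\pi}q^{-1}+\frac{7i}{4\pi}+O(|q|)$ (the same leading balance $q(C-\tau)=\frac{i}{24\pi}$ you derive), then invokes uniqueness from Theorem \ref{Unique-pole}-(1) to conclude $\tau(C)=\tau_1(C)$, and obtains (\ref{i-64}) from (\ref{i-63}) and (\ref{i-65}) exactly as you do. Your only variations are cosmetic and sound: writing $f_C$ as a quadratic in $w=C-\tau$ avoids fixing a branch of $\sqrt{g_2(\tau)/12}$, and your Rouch\'e step makes explicit the existence claim that the paper dispatches with \emph{it is easy to prove}.
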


\begin{proof}
Recalling (\ref{c=tau}), we define
\begin{equation}
\phi_{\pm}(\tau):=\tau-\frac{2\pi i}{\eta_{1}(\tau)\pm \sqrt{g_{2}(\tau)/12}%
},\text{ \ }\tau \in F_{0}. \label{i-53}%
\end{equation}
Write $\tau=a+bi$ and $q=e^{2\pi i\tau}$ as before. Recall from the $q$-expansions (\ref{ex-1}) and (\ref{ex-2}) that
\begin{equation}
\eta_{1}(\tau)=\frac{1}{3}\pi^{2}-8\pi^{2}(q+3q^{2})+O(|q|^{3}), \label{c-30}%
\end{equation}%
\begin{equation}
g_{2}(\tau)=\frac{4}{3}\pi^{4}+320\pi^{4}(q+9q^{2})+O(|q|^{3}). \label{c-31}%
\end{equation}
For $\tau\in F_{0}$, we fix the branch of $\sqrt{g_{2}(\tau)/12}$ near $b=+\infty$ such that
$\sqrt{g_{2}(\tau)/12}=\frac{1}{3}\pi^{2}+O(|q|)$ near $b=+\infty$. Then we
easily obtain%
\[
\eta_{1}(\tau)-\sqrt{g_{2}(\tau)/12}=-48\pi^{2}q\left(  1-42q+O(|q|^{2}%
)\right)  ,
\]
and so%
\begin{align*}
\phi_{-}(\tau)  &  =\tau+\frac{i}{24\pi}q^{-1}+\frac{7i}{4\pi}+O(|q|)\\
&  =a+\frac{\sin2\pi a}{24\pi}e^{2\pi b}+i\bigg(  b+\frac{\cos2\pi a}{24\pi
}e^{2\pi b}+\frac{7}{4\pi}\bigg)  +O(|q|).
\end{align*}
Therefore, when $C\in \mathbb{R}$ and $|C|\rightarrow+\infty$, it is easy to
prove the existence of $\tau_{1}(C)=a_{1}(C)+ib_{1}(C)\in \mathring{F}_{0}$
such that $C=\phi_{-}(\tau_{1}(C))$ and%
\[
b_{1}(C)\rightarrow+\infty,\text{ }a_{1}(C)\rightarrow \left \{
\begin{array}
[c]{c}%
1/4\text{ \ if }C\rightarrow+\infty,\\
3/4\text{ \ if }C\rightarrow-\infty,
\end{array}
\right.
\]
i.e. $\tau_{1}(C)\rightarrow \infty$ as $C\rightarrow \pm \infty$. By $\eta
_{2}=\tau \eta_{1}-2\pi i$, (\ref{i-53}) and (\ref{i-54}), it follows
that $C=\phi_{-}(\tau_{1}(C))$ implies $f_{C}(\tau_{1}(C))=0$. Since $\tau(C)$
is the unique zero of $f_{C}$ in $F_{0}$, we conclude $\tau(C)=\tau_{1}(C)$.
This proves (\ref{i-63}). Finally, (\ref{i-64}) follows from (\ref{i-63}) and
(\ref{i-65}). The proof is complete.
\end{proof}

\section{Critical points of $\eta_{1}(\tau)$ or equivalently $E_2(\tau)$}

\label{NOZ}

\subsection{Location of critical points of $\eta_{1}(\tau)$}
This section is devoted to the proof of our main results.
Note that $e^{\pi i/3}=\frac{1}{1-e^{\pi i/3}}$. Then by
\[\eta_1(\tfrac{1}{1-\tau})=(1-\tau)(\eta_{1}(\tau)-\eta_{2}(\tau))\]
and the Legendre relation $\eta_{2}(\tau)=\tau\eta_{1}(\tau)-2\pi i$, we easily obtain
\begin{equation}\label{eta1rho}\eta_{1}(e^{\pi i/3})={2\pi}/{\sqrt{3}}.\end{equation}
First we prove the following result, which implies Theorems \ref{eta-thm0}-\ref{F-one} as consequences.

\begin{theorem}
\label{Coro-1} Let $\tau(C)$ be the unique zero of $f_{C}(\tau)$ for $C\in \mathbb{R}\backslash \{0,1\}$ in Theorem \ref{Unique-pole}-(1). Then
the followings hold:

\begin{itemize}
\item[(1)] For any $m\in \mathbb{Z}$, there holds $\eta_{1}^{\prime}%
(\tau)\not =0$ in $F_{0}+m$. Consequently, $\eta_{1}^{\prime}(\tau)\not =0$
whenever $\operatorname{Im}\tau \geq \frac{1}{2}$.

\item[(2)] Given $\gamma=%
\begin{pmatrix}
a & b\\
c & d
\end{pmatrix}
\in \Gamma_{0}(2)/\{ \pm I_{2}\}$ with $c\not =0$. Then $\frac{a\tau
(-d/c)+b}{c\tau(-d/c)+d}$ is the unique zero of $\eta_{1}^{\prime}(\tau)$ in
the fundamental domain $\gamma(F_{0})$ of $\Gamma_{0}(2)$. In particular,%
\begin{equation}
\Theta:=\left \{  \left.  \frac{a\tau(\tfrac{-d}{c})+b}{c\tau(\tfrac{-d}{c}%
)+d}\right \vert
\begin{pmatrix}
a & b\\
c & d
\end{pmatrix}
\in \Gamma_{0}(2)/\{ \pm I_{2}\} \text{ with }c\not =0\right \}
\label{zero-set}%
\end{equation}
gives rise to all the zeros of $\eta_{1}^{\prime}(\tau)$ in $\mathbb{H}$.
\end{itemize}
\end{theorem}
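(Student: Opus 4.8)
The plan is to reduce the entire statement to the zero structure of $f_{C}$ already settled in Theorem \ref{Unique-pole}, via a transformation law for $\eta_{1}'$. Since $\eta_{1}=\frac{\pi^{2}}{3}E_{2}$, the transformation (\ref{sl-eta}) reads $\eta_{1}(\gamma\tau)=(c\tau+d)^{2}\eta_{1}(\tau)-2\pi i c(c\tau+d)$ for $\gamma=\begin{pmatrix}a&b\\c&d\end{pmatrix}\in SL(2,\mathbb{Z})$. Differentiating in $\tau$ and using $\frac{d}{d\tau}\gamma\tau=(c\tau+d)^{-2}$, I would obtain
\[
\eta_{1}'(\gamma\tau)=(c\tau+d)^{2}\bigl[(c\tau+d)^{2}\eta_{1}'(\tau)+2c(c\tau+d)\eta_{1}(\tau)-2\pi i c^{2}\bigr].
\]
Substituting $\eta_{1}'(\tau)=\frac{i}{2\pi}(\eta_{1}(\tau)^{2}-\tfrac{1}{12}g_{2}(\tau))$ from (\ref{d-eta1}) and $\eta_{2}=\tau\eta_{1}-2\pi i$, a direct computation identifies the bracket with a nonzero multiple of $f_{-d/c}(\tau)$; explicitly $\eta_{1}'(\gamma\tau)=\frac{ic^{2}(c\tau+d)^{2}}{24\pi}f_{-d/c}(\tau)$ when $c\neq 0$. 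Since $c\tau+d\neq 0$ on $\mathbb{H}$, this gives the crucial equivalence: for $c\neq 0$, one has $\eta_{1}'(\gamma\tau)=0\Longleftrightarrow f_{-d/c}(\tau)=0$.

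For part (1), first note $\eta_{1}'(\tau+1)=\eta_{1}'(\tau)$ (the case $c=0,d=1$ above, or the $q$-expansion), so it suffices to prove $\eta_{1}'\neq 0$ on $F_{0}$; as $\{\operatorname{Im}\tau\geq\tfrac12\}\subset\bigcup_{m}(F_{0}+m)$, this yields the ``consequently'' clause. On the imaginary axis Lemma \ref{lemma-etai} gives $g_{2}-12\eta_{1}^{2}>0$, hence $\eta_{1}'\neq 0$, and periodicity handles $\operatorname{Re}\tau=1$. For the semicircle $|\tau-\tfrac12|=\tfrac12$ I would write each point as $\tfrac{w}{w+1}$ with $w\in i\mathbb{R}^{+}$, i.e.\ apply $\gamma=\begin{pmatrix}1&0\\1&1\end{pmatrix}$ (so $-d/c=-1$), giving $\eta_{1}'(\tfrac{w}{w+1})=\frac{i(w+1)^{2}}{24\pi}f_{-1}(w)$; since $-1\in\mathbb{R}\setminus\{0,1\}$, Lemma \ref{lemma-10} forces $f_{-1}(w)\neq 0$, hence $\eta_{1}'\neq 0$ there. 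For the interior I would exploit the blow-up limit $\frac{f_{C}(\tau)}{C^{2}}=12(\eta_{1}-\tfrac{\eta_{2}}{C})^{2}-g_{2}(1-\tfrac{\tau}{C})^{2}\to 12\eta_{1}^{2}-g_{2}=-24\pi i\,\eta_{1}'(\tau)$, uniformly on compact subsets of $\mathbb{H}$ as the \emph{real} parameter $C\to+\infty$. If $\eta_{1}'(\tau_{0})=0$ for some $\tau_{0}\in\mathring{F}_{0}$, then Rouch\'e's theorem on a small disk $D\subset\mathring{F}_{0}$ around $\tau_{0}$ would force $f_{C}$ to have a zero in $D$ for all large $C$; but by Lemma \ref{lemma-11} the unique zero $\tau(C)$ of $f_{C}$ in $F_{0}$ escapes to the cusp $\infty$, so $f_{C}$ would then have a \emph{second} zero in $F_{0}$, contradicting the uniqueness in Theorem \ref{Unique-pole}-(1).

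For part (2), fix $\gamma\in\Gamma_{0}(2)/\{\pm I_{2}\}$ with $c\neq 0$. As $c$ is even and $ad-bc=1$, neither $d=0$ nor $d=-c$ can occur, so $-d/c\in\mathbb{Q}\setminus\{0,1\}$ and Theorem \ref{Unique-pole}-(1) applies: $\tau(-d/c)$ is the unique zero of $f_{-d/c}$ in $F_{0}$, lying in $\mathring{F}_{0}$. By the equivalence above, as $\tau$ runs through $F_{0}$ the only zero of $\eta_{1}'$ in $\gamma(F_{0})$ is $\gamma\tau(-d/c)$. Finally, since the $\gamma(F_{0})$ tile $\mathbb{H}$, any zero of $\eta_{1}'$ lies in some $\gamma(F_{0})$: part (1) excludes the translation domains ($c=0$), Lemma \ref{lemma-10} with the equivalence excludes the boundaries $\gamma(\partial F_{0})$ when $c\neq 0$, and the remaining interior $c\neq 0$ case produces $\gamma\tau(-d/c)\in\Theta$; thus $\Theta$ is exactly the zero set, which proves the ``in particular'' assertion. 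I expect the main obstacle to be the interior non-vanishing in part (1): making the degeneration $f_{C}/C^{2}\to -24\pi i\,\eta_{1}'$ interact rigorously with the moving zero $\tau(C)$ through a Rouch\'e/Hurwitz argument, which is precisely where the uniqueness from Theorem \ref{Unique-pole} and the escape-to-cusp from Lemma \ref{lemma-11} must be combined.
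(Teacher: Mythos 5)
Your proposal is correct and follows essentially the paper's own proof: your identity $\eta_{1}'(\gamma\tau)=\frac{ic^{2}(c\tau+d)^{2}}{24\pi}f_{-d/c}(\tau)$ is precisely the paper's key computation $12\eta_{1}(\gamma\tau)^{2}-g_{2}(\gamma\tau)=c^{2}(c\tau+d)^{2}f_{-d/c}(\tau)$ combined with (\ref{d-eta1}), and the boundary cases of part (1), all of part (2), and the tiling argument are handled with the same ingredients (Lemma \ref{lemma-etai}, Lemma \ref{lemma-10}, Theorem \ref{Unique-pole}-(1), Lemma \ref{lemma-11}). The only deviation is the interior step of part (1), where you apply Rouch\'{e}'s theorem to the rescaled family $f_{C}(\tau)/C^{2}\to 12\eta_{1}^{2}-g_{2}=-24\pi i\,\eta_{1}'(\tau)$ as real $C\to+\infty$, whereas the paper argues that a zero of $12\eta_{1}^{2}-g_{2}$ forces $\phi_{+}$ (or $\phi_{-}$) to have a pole and hence to attain large real values $C$ nearby; both routes produce a zero of $f_{C}$ in a fixed compact neighborhood for large $C$ and derive the same contradiction with the uniqueness of $\tau(C)$ and its escape to the cusp, your version having the mild advantage of avoiding any choice of branch of $\sqrt{g_{2}(\tau)/12}$ and the split into the two cases $\phi_{\pm}$.
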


\begin{proof}(1). First we claim that%
\begin{equation}
12\eta_{1}(\tau)^{2}-g_{2}(\tau)\not =0\text{ for }\tau \in \partial F_{0}%
\cap \mathbb{H}. \label{i-55}%
\end{equation}

If $\tau \in i\mathbb{R}^{+}$, $12\eta_{1}(\tau)^{2}-g_{2}(\tau)\not =0$
follows from Lemma \ref{lemma-etai}. If $\tau \in i\mathbb{R}^{+}+1$, then%
\[
12\eta_{1}(\tau)^{2}-g_{2}(\tau)=12\eta_{1}(\tau-1)^{2}-g_{2}(\tau-1)\not =0.
\]

If $|\tau-\frac{1}{2}|=\frac{1}{2}$, then $\tau^{\prime}=\frac{\tau}{1-\tau
}\in i\mathbb{R}^{+}$. By using (\ref{c-5}), we see from (\ref{i-54}) with
$C=-1$ that%
\begin{align*}
f_{-1}(\tau^{\prime})  &  =12(\eta_{1}(\tau^{\prime})+\eta_{2}(\tau^{\prime
}))^{2}-g_{2}(\tau^{\prime})(1+\tau^{\prime})^{2}\\
&  =(1-\tau)^{2}[12\eta_{1}(\tau)^{2}-g_{2}(\tau)].
\end{align*}
Since Lemma \ref{lemma-10} shows $f_{-1}(\tau^{\prime})\not =0$, we obtain
$12\eta_{1}(\tau)^{2}-g_{2}(\tau)\not =0$. This proves (\ref{i-55}).

Suppose by contradiction that $12\eta_{1}(\tau)^{2}-g_{2}(\tau)$ has a zero
$\tau_{0}$ in $\mathring{F}_{0}$. Then%
\[
\text{either }\eta_{1}(\tau_{0})-\sqrt{g_{2}(\tau_{0})/12}=0\text{ or }%
\eta_{1}(\tau_{0})+\sqrt{g_{2}(\tau_{0})/12}=0.
\]
Without loss of generality, we may assume $\eta_{1}(\tau_{0})+\sqrt{g_{2}%
(\tau_{0})/12}=0$. Recall (\ref{eta1rho}) and the fact that $g_{2}(\tau)=0$ in $F_{0}$ if and only if $\tau=e^{\pi i/3}$. So $\tau
_{0}\not =e^{\pi i/3}$ and $g_{2}(\tau_{0})\not =0$, i.e. $\eta_{1}(\tau)+\sqrt{g_{2}%
(\tau)/12}$ is holomorphic at $\tau_0$ with $\tau_0$ being a zero. Recalling $\phi_{\pm
}(\tau)$ in (\ref{i-53}), it follows that $\phi_{+}(\tau)$ is meromorphic at
$\tau_{0}$ with $\tau_{0}$ being a pole and so maps a small neighborhood
$U\subset \mathring{F}_{0}$ of $\tau_{0}$ onto a neighborhood of $\infty$.
Then for $C>0$ large enough, there exists $\tau_{1}(C)\in
U$ such that $C=\phi_{+}(\tau_{1}(C))$, which implies $f_{C}(\tau_{1}(C))=0$.
Applying Theorem \ref{Unique-pole}-(1) and Lemma \ref{lemma-11}, we
obtain $\tau_{1}(C)=\tau(C)\rightarrow \infty$ as $C\rightarrow+\infty$, which
contradicts with $\tau_{1}(C)\in U$.

Therefore, we have proved that \begin{equation}\label{eta1g2}12\eta_{1}(\tau)^{2}-g_{2}(\tau)\neq 0\quad\text{for any }\tau\in F_{0}.\end{equation}
Since%
\[
\eta_{1}^{\prime}(\tau)=\frac{i}{2\pi}\left(  \eta_{1}(\tau)^{2}-\tfrac{1}{12}g_{2}(\tau)\right)
\]
and $\eta_{1}(\tau+1)=\eta_{1}(\tau)$, we conclude that $\eta_{1}^{\prime
}(\tau)\not =0$ for any $\tau \in F_{0}+m$ and $m\in \mathbb{Z}$. This proves (1).

(2). Given $\gamma=%
\begin{pmatrix}
a & b\\
c & d
\end{pmatrix}
\in \Gamma_{0}(2)/\{\pm I_2\}$ with $c\not =0$. Write $\tau^{\prime}%
=\gamma \cdot \tau=\frac{a\tau+b}{c\tau+d}$ with $\tau \in F_{0}$. By using%
\begin{equation}
\eta_{1}(\tau^{\prime})=(c\tau+d)(c\eta_{2}(\tau)+d\eta_{1}(\tau)),\text{
\ }g_{2}(\tau^{\prime})=(c\tau+d)^{4}g_{2}(\tau), \label{c-38}%
\end{equation}
we have%
\begin{align*}
12\eta_{1}(\tau^{\prime})^{2}-g_{2}(\tau^{\prime})  &  =c^{2}(c\tau
+d)^{2}\left[  12(\tfrac{d}{c}\eta_{1}+\eta_{2})^{2}-g_{2}(\tau)(\tfrac{d}%
{c}+\tau)^{2}\right] \\
&  =c^{2}(c\tau+d)^{2}f_{\tfrac{-d}{c}}(\tau).
\end{align*}
Clearly $\tfrac{-d}{c}\in \mathbb{Q}\backslash \mathbb{Z}$, so Theorem
\ref{Unique-pole}-(1) shows that $\tau(\tfrac{-d}{c})\in \mathring{F}_{0}$ is the unique zero of
$f_{\tfrac{-d}{c}}(\tau)$ in $F_{0}$. Consequently,
\[
\gamma \cdot \tau(\tfrac{-d}{c})=\frac{a\tau(\tfrac{-d}{c})+b}{c\tau(\tfrac
{-d}{c})+d}\in \gamma(\mathring{F}_{0})
\]
is the unique zero of $12\eta_{1}^{2}-g_{2}$ in $\gamma(F_{0})$. Since%
\[
\mathbb{H=}\bigcup_{\gamma \in \Gamma_{0}(2)/\{ \pm I_{2}\}}\gamma (F_{0}),
\]
we conclude that the set $\Theta$ defined in (\ref{zero-set}) gives all the
zeros of $12\eta_{1}^{2}-g_{2}$ and so $\eta_{1}^{\prime}$. This proves (2). The proof is complete.
\end{proof}

Recall the curves defined in Section 1:
\[
\mathcal{C}_{-}=\{ \tau(C)|C\in(-\infty,0)\},\text{ \ }\mathcal{C}_{+}=\{
\tau(C)|C\in(1,+\infty)\},
\]%
\[
\mathcal{C}_{0}=\{ \tau(C)|C\in(0,1)\}.
\]

\begin{proof}[Proof of Theorem \ref{Location}]
The smoothness of the three curves will be given in Section \ref{geometricinter}.
The assertion that under the M\"{o}bius transformation of $\Gamma_{0}(2)$ action, the collection of all critical points of $\eta_{1}(\tau)$ is precisely the set $\mathcal{C}$ given by (\ref{cf0}), is a direct consequence of the expression (\ref{zero-set}) of the critical point set $\Theta$. Recall that $\tau(C)$ is smooth as a function of $C\in\mathbb{R}\setminus\{0,1\}$.
To prove the denseness, i.e. the identity (\ref{cf1}), it suffices to prove that
\[Q_0:=\{\tfrac{-d}{c}\;|\;d\in\mathbb{Z},\, c\in 2\mathbb{Z}\setminus\{0\}, \, (c,d)=1\}\]
is dense in $\mathbb{Q}$ and hence dense in $\mathbb{R}$. Take any $\frac{m}{n}\in \mathbb{Q}\setminus\{0\}$ such that $m,n\in\mathbb{Z}$ and $(m,n)=1$.

{\bf Case 1.} $n$ is even. Then $\frac{m}{n}\in Q_0$.

{\bf Case 2.} $n$ is odd and $m$ is odd. Then
\[Q_0\ni\frac{m(2^k+n)}{2^kn}\to \frac{m}{n}\quad \text{as}\;k\to+\infty.\]

{\bf Case 3.} $n$ is odd and $m$ is even. Then
\[Q_0\ni\frac{2^k m+n}{2^kn}\to \frac{m}{n}\quad \text{as}\;k\to+\infty.\]
This proves $\overline{Q_{0}}=\mathbb{R}$ and so completes the proof.
\end{proof}

\begin{proof}[Proof of Corollary \ref{line1/2}]
Let $\gamma=
\begin{pmatrix}
1 & -1\\
2 & -1
\end{pmatrix}
\in \Gamma_{0}(2)$. Then it is easy to prove that%
\[
\gamma (F_{0})=\left \{  \tau \in \mathbb{H}\,|\,|\tau-\tfrac{1}{2}|\leq \tfrac
{1}{2},\text{ }|\tau-\tfrac{1}{4}|\geq \tfrac{1}{4},\text{ }|\tau-\tfrac{3}%
{4}|\geq \tfrac{1}{4}\right \}  ,
\]
and so%
\[
\left \{  \tau \in \mathbb{H}|\operatorname{Re}\tau=\tfrac{1}{2}\right \}  \subset
F_{0}\cup \gamma (F_{0}).
\]
Applying Theorem \ref{Coro-1}-(2), we see that $\tilde{\tau}:=\frac{\tau(1/2)-1}{2\tau(1/2)-1}$ is
the unique zero of $\eta_{1}^{\prime}$ in $\gamma (F_{0})$. We will prove
in Theorem \ref{lemma-15} (iii) that $\tau(1/2)=\frac{1}{2}+i\hat{b}$ for some
$\hat{b}\in (\frac{\sqrt{3}}{2},\frac{6}{5})$. Then $
\tilde{\tau}=\frac{1}{2}+\frac{i}{4\hat{b}}\in \{\tau
\in \mathbb{H}|\operatorname{Re}\tau=\tfrac{1}{2}\}$, namely $\tilde{\tau}$ is the unique zero of
$\eta_{1}^{\prime}$ on the line $\{ \tau \in \mathbb{H}|\operatorname{Re}%
\tau=\frac{1}{2}\}$ with $b_0:=\operatorname{Im}\tilde{\tau}\in(\frac{5}{24},\frac{1}%
{2\sqrt{3}})$. Recall (\ref{ex-1}) and (\ref{eta1rho}) that $\eta_1(\frac{1}{2}+i\frac{\sqrt{3}}{2})>\frac{\pi^2}{3}
=\lim_{b\to+\infty}\eta_1(\frac{1}{2}+ib)$. Moreover, it follows from (\ref{eisenstein}) and (\ref{sl-eta}) that for $\tau=\frac{1}{2}+ib$,
\[\eta_{1}\left(\tfrac{1}{2}+\tfrac{i}{4b}\right)=\eta_1(\tfrac{\tau-1}{2\tau-1})
=-4b^{2}\eta_1(\tau)+8\pi b\to -\infty\;\text{as} \;b\to+\infty. \]
Thus $\eta
_{1}(\frac12+ib)$ is strictly increasing for $b\in (0, b_0)$ and strictly decreasing for $b\in (b_0,+\infty)$. The proof is complete.
\end{proof}

Now we can finish the proof of Theorem \ref{Unique-pole}.
\begin{proof}
[Proof of Theorem \ref{Unique-pole}-(2)]First we consider $C=0$, i.e.%
\[
f_{0}(\tau)=12\eta_{2}(\tau)^{2}-g_{2}(\tau)\tau^{2}.
\]
Suppose $f_{0}(\tau)=0$ for some $\tau \in F_{0}$. Then it is easy to see that
$\tau^{\prime}:=\frac{\tau-1}{\tau}\in F_{0}$. By%
\[
\eta_{1}(\tau^{\prime})=\tau \eta_{2}(\tau),\text{ \ }g_{2}(\tau^{\prime}%
)=\tau^{4}g_{2}(\tau),
\]
we obtain%
\[
12\eta_{1}(\tau^{\prime})^{2}-g_{2}(\tau^{\prime})=\tau^{2}f_{0}(\tau)=0,
\]
a contradiction with (\ref{eta1g2}).

Now we consider $C=1$, i.e.%
\[
f_{1}(\tau):=12(\eta_{1}(\tau)-\eta_{2}(\tau))^{2}-g_{2}(\tau)(1-\tau)^{2}.
\]
Suppose $f_{1}(\tau)=0$ for some $\tau \in F_{0}$. Then it is easy to see that
$\tau^{\prime}:=\frac{1}{1-\tau}\in F_{0}$. By%
\[
\eta_{1}(\tau^{\prime})=(1-\tau)(\eta_{1}(\tau)-\eta_{2}(\tau)),\text{
\ }g_{2}(\tau^{\prime})=(1-\tau)^{4}g_{2}(\tau),
\]
we obtain%
\[
12\eta_{1}(\tau^{\prime})^{2}-g_{2}(\tau^{\prime})=(1-\tau)^{2}f_{1}(\tau)=0,
\]
again a contradiction with (\ref{eta1g2}). The proof is complete.
\end{proof}

\subsection{Geometry of curves $\mathcal{C}_{-}$, $\mathcal{C}_{0}$ and $\mathcal{C}_{+}$}
In this section, we want to prove some geometry about these three curves, including their intersection with the line $\operatorname{Re}\tau=\frac{1}{2}$.

\begin{theorem}\label{lemma-15} {\ }

\begin{itemize}
\item[(i)] The function $C\mapsto \tau(C)$ is one-to-one whenever $C$ is
restricted in one of $(-\infty,0)$, $(0,1)$ and $(1,+\infty)$, i.e. any one of
curves $\mathcal{C}_{-}$, $\mathcal{C}_{0}$, $\mathcal{C}_{+}$ has no self-intersection. Furthermore,
\[\partial \mathcal{C}_{0}=\{0,1\},\;\;\partial \mathcal{C}_{-}=\{0,\tfrac{3}{4}+i\infty\}, \;\;\partial \mathcal{C}_{+}=\{1,\tfrac{1}{4}+i\infty\}.\]

\item[(ii)] The curve $\mathcal{C}_{0}$ is symmetric with respect to the line
$\operatorname{Re}\tau=\frac{1}{2}$; $\mathcal{C}_{-}$ is symmetric with
$\mathcal{C}_{+}$ with respect to the line $\operatorname{Re}\tau=\frac{1}{2}$.

\item[(iii)] $\tau(\frac{1}{2})$ is the unique intersection point of the curve
$\mathcal{C}_{0}$ with the line $\operatorname{Re}\tau=\frac{1}{2}$.
Furthermore, $\operatorname{Im}\tau(\frac{1}{2})\in(\frac{\sqrt{3}}{2}%
,\frac{6}{5})$.

\item[(iv)] $\mathcal{C}_{-}$ (resp. $\mathcal{C}_{+}$) has a unique
intersection point $\tau_{-}$ with the line $\operatorname{Re}\tau=\frac{1}%
{2}$. Furthermore, $\operatorname{Im}\tau_{-}\in(\frac{1}{2},\frac{\sqrt{3}%
}{2})$.

\item[(v)] $\tau_{-}$ is the unique intersection point of $\mathcal{C}_{-}$
with $\mathcal{C}_{+}$.

\item[(vi)] $\frac{1}{1-\tau_{-}}$ (resp. $\frac{\tau_{-}}{\tau_{-}-1}$) is the
unique intersection point of $\mathcal{C}_{0}$ with $\mathcal{C}_{-}$ (resp.
$\mathcal{C}_{+}$).
\end{itemize}
\end{theorem}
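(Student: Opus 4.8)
The plan is to deduce all six assertions from two symmetry relations for the map $C\mapsto\tau(C)$, together with explicit $q$-expansion estimates along the line $\operatorname{Re}\tau=\tfrac12$. The first relation is the Möbius identity (\ref{i-65}) of Lemma \ref{lemma-13}, $\tau(\tfrac{1}{1-C})=\tfrac{1}{1-\tau(C)}$; writing $M(\tau)=\gamma_1\cdot\tau=\tfrac{1}{1-\tau}$, this says the order-three map $M$ cyclically permutes the three curves, $M:\mathcal{C}_-\to\mathcal{C}_0\to\mathcal{C}_+\to\mathcal{C}_-$, because $C\mapsto\tfrac{1}{1-C}$ cyclically permutes the intervals $(-\infty,0),(0,1),(1,+\infty)$. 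The second relation, which I would establish first, is the reflection identity $\tau(1-C)=1-\overline{\tau(C)}$. To prove it, note that $\eta_1,g_2$ have real $q$-coefficients and are $1$-periodic, so $\eta_1(1-\bar\tau)=\overline{\eta_1(\tau)}$ and $g_2(1-\bar\tau)=\overline{g_2(\tau)}$; inserting this and $\eta_2=\tau\eta_1-2\pi i$ into (\ref{i-54}) yields, for real $C$, the clean identity $f_{1-C}(1-\bar\tau)=\overline{f_C(\tau)}$. Since $\tau\mapsto 1-\bar\tau$ preserves $F_0$, the uniqueness in Theorem \ref{Unique-pole}-(1) then gives $\tau(1-C)=1-\overline{\tau(C)}$.

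With these identities, part (ii) is immediate: $C\mapsto 1-C$ fixes $(0,1)$ and swaps $(-\infty,0)\leftrightarrow(1,+\infty)$, so $\mathcal{C}_0$ is symmetric about $\operatorname{Re}\tau=\tfrac12$ and $\mathcal{C}_-,\mathcal{C}_+$ are mirror images. The boundary data in (i) are read off from Lemma \ref{lemma-11}: (\ref{i-64}) gives the endpoints $0,1$, and (\ref{i-63}) gives $\tfrac34+i\infty$ and $\tfrac14+i\infty$. For (iii), the reflection identity at $C=\tfrac12$ forces $\tau(\tfrac12)=1-\overline{\tau(\tfrac12)}$, i.e. $\operatorname{Re}\tau(\tfrac12)=\tfrac12$, while uniqueness of the crossing will follow from the injectivity in (i). The bound is an explicit estimate: on the line one computes $f_{1/2}(\tfrac12+ib)=g_2 b^2-12(b\eta_1-2\pi)^2$, which at $b=\tfrac{\sqrt3}{2}$ (the point $\rho$, where $g_2(\rho)=0$ and $\eta_1(\rho)=2\pi/\sqrt3$ by (\ref{eta1rho})) equals $-12\pi^2<0$, whereas at $b=\tfrac65$ the $q$-series give a positive value; bracketing the unique zero of $f_{1/2}$ yields $\operatorname{Im}\tau(\tfrac12)\in(\tfrac{\sqrt3}{2},\tfrac65)$.

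The intersection statements (iv)--(vi) rest on the behavior of the branches $\phi_\pm$ of (\ref{i-53}) on the line. For $\operatorname{Im}\tau>\tfrac{\sqrt3}{2}$ one has $g_2(\tfrac12+ib)>0$, so $\sqrt{g_2/12}$ is real, $\operatorname{Re}\phi_\pm(\tfrac12+ib)=\tfrac12$, and a crossing forces $C=\tfrac12$; hence only $\mathcal{C}_0$ meets the line there. For $\operatorname{Im}\tau\in(\tfrac12,\tfrac{\sqrt3}{2})$ one has $g_2<0$, so $\sqrt{g_2/12}=i\beta$ is imaginary and $\operatorname{Im}\phi_+=\operatorname{Im}\phi_-=\operatorname{Im}\tau-\tfrac{2\pi\eta_1}{\eta_1^2+\beta^2}$ vanish \emph{simultaneously}; at that point $\tau_-$ both branches are real, with $\operatorname{Re}\phi_+=\tfrac12-\tfrac{2\pi\beta}{\eta_1^2+\beta^2}<\tfrac12$ and $\operatorname{Re}\phi_-=\tfrac12+\tfrac{2\pi\beta}{\eta_1^2+\beta^2}>\tfrac12$ summing to $1$, so $\tau_-\in\mathcal{C}_-\cap\mathcal{C}_+$. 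Existence and the bound $\operatorname{Im}\tau_-\in(\tfrac12,\tfrac{\sqrt3}{2})$ follow from a sign change of $\operatorname{Im}\phi_+$, which at $\rho$ equals $\tfrac{\sqrt3}{2}-\tfrac{2\pi}{\eta_1(\rho)}=\tfrac{\sqrt3}{2}-\sqrt3<0$ and becomes positive as $b\downarrow\tfrac12$ via the $q$-expansions. Granting uniqueness from (v), applying $M$ to $\{\tau_-\}=\mathcal{C}_-\cap\mathcal{C}_+$ produces the remaining intersections $M\tau_-=\tfrac{1}{1-\tau_-}\in\mathcal{C}_0\cap\mathcal{C}_-$ and $M^2\tau_-=\gamma_2\cdot\tau_-=\tfrac{\tau_--1}{\tau_-}\in\mathcal{C}_0\cap\mathcal{C}_+$, which is (vi).

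I expect the genuine difficulty to be the injectivity in (i) and the uniqueness of the intersection points in (iv)--(v), which are two faces of one problem. Differentiating $f_C(\tau(C))=0$ and using that $f_C$ has only simple zeros (\ref{simplezero}) shows $\tau'(C)\neq0$ as long as the curve avoids $\rho$; and no curve contains $\rho$, since $\phi_\pm(\rho)=\tfrac12-i\tfrac{\sqrt3}{2}\notin\mathbb{R}$, so each curve is a regular immersed arc. A self-intersection $\tau(C_1)=\tau(C_2)$ with $C_1\neq C_2$ in one interval would force $\phi_+,\phi_-$ both real at a single point with values in that same interval, while any extra intersection of two distinct curves is likewise an additional point where both $\phi_\pm$ are real. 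Thus both assertions reduce to enumerating the ``doubly real'' locus $\{\operatorname{Im}\phi_+=\operatorname{Im}\phi_-=0\}$ in $\mathring{F}_0$ and verifying that its only points are the three $M$-orbit points above, at each of which the two real $C$-values lie in different intervals. The hard part will be this enumeration: because $\sqrt{g_2}$ has a branch point at $\rho\in\mathring{F}_0$, the branches $\phi_\pm$ are not globally single-valued, so controlling the count requires either an argument-principle estimate for $\tau\mapsto(\operatorname{Im}\phi_+,\operatorname{Im}\phi_-)$ on $\partial F_0$ or a careful tracking of the branch of $\sqrt{g_2}$ along each curve as it winds around $\rho$.
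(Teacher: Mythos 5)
Your proposal assembles the easy components correctly and even improves one of them, but it does not prove the theorem: the two assertions you defer at the end --- global injectivity in (i) and uniqueness of the crossings in (iv)--(v) --- are exactly the mathematical substance here, and your sketch for them would not close. The observation $\tau'(C)\neq 0$ (from the simple-zero property (\ref{simplezero})) only makes each curve an immersed arc; it says nothing about self-intersections, so (i) is untouched, and your proposed enumeration of the doubly-real locus $\{\operatorname{Im}\phi_+=\operatorname{Im}\phi_-=0\}$ by an argument-principle count is left as a program and runs into the multivaluedness at $\rho$ that you yourself flag. The paper resolves both points by devices absent from your plan. For (i), Lemma \ref{lemma-14} first shows via (\ref{i-66}) and (\ref{eta1g2}) that $\mathcal{C}_\pm$ avoid the slit $\{\tfrac12+ib:\,b\geq\tfrac{\sqrt{3}}{2}\}$, so on the slit complement of $F_0$ both $\phi_\pm$ are \emph{single-valued} holomorphic; then the sets $D_\pm=\{C:\,C=\phi_\pm(\tau(C))\}$ are closed in the connected interval $(-\infty,0)$, hence one of them is the whole interval, i.e.\ a single fixed branch $\phi_\epsilon$ is a global left inverse of $C\mapsto\tau(C)$ --- injectivity follows with no counting at all, and $\mathcal{C}_0$ is handled through (\ref{i-65}). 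For uniqueness in (iv)--(v), the crossing condition on $b\in[\tfrac12,\tfrac{\sqrt3}{2})$ reduces to the single real equation $\theta(b)=\theta_1(b)$ (your identity $\operatorname{Im}\phi_+=\operatorname{Im}\phi_-=b-\tfrac{2\pi\eta_1}{\eta_1^2-g_2/12}$ is exactly (\ref{i9})), and the paper proves this has a \emph{unique} root by a five-step monotonicity analysis built on the Ramanujan-type system (\ref{i30})--(\ref{i31}) along $\operatorname{Re}\tau=\tfrac12$ together with the facts $e_1>0$, $g_3>0$ and the discriminant inequality (\ref{i23}) from \cite{LW}. Nothing in your proposal substitutes for this step, and without it (iii)'s uniqueness (which you route through (i)), (v), and (vi) all remain conditional.

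On the parts you do carry out: the reflection identity $\tau(1-C)=1-\overline{\tau(C)}$ and assertion (ii) are proved exactly as in the paper; your bracketing of $\operatorname{Im}\tau(\tfrac12)$ by the sign of $f_{1/2}(\tfrac12+ib)=g_2b^2-12(b\eta_1-2\pi)^2$, which is $-12\pi^2$ at $b=\tfrac{\sqrt3}{2}$ and positive at $b=\tfrac65$ (verifiable with the same tail estimates as Lemma \ref{lemma33}), is simpler than the paper's route via (\ref{ex})--(\ref{ex-7}) and is a genuine small improvement. Your existence argument for $\tau_-$ via the sign change of $\operatorname{Im}\phi_+$ between $b=\tfrac12$ and $b=\tfrac{\sqrt3}{2}$ reproduces the paper's Step 1 computation ($\eta_1(\tfrac12+\tfrac{i}2)=2\pi$, $g_2(\tfrac12+\tfrac{i}{2})=-4g_2(i)<-48\pi^2$), though the paper instead gets existence topologically from Lemma \ref{lemma-11}; note also that to place $\tau_-$ on $\mathcal{C}_-\cap\mathcal{C}_+$ you must still exclude $\operatorname{Re}\phi_+(\tau_-)\in(0,\tfrac12)$, using Theorem \ref{Unique-pole}-(2) and the fact that a crossing with $C\in(0,1)$ forces $g_2\geq0$, hence $b\geq\tfrac{\sqrt3}{2}$. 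Finally, your $M^2\tau_-=\tfrac{\tau_--1}{\tau_-}$ is the correct point of $\mathcal{C}_0\cap\mathcal{C}_+$: the expression $\tfrac{\tau_-}{\tau_--1}$ displayed in the theorem lies in the lower half-plane when $\operatorname{Re}\tau_-=\tfrac12$ and is evidently a typo, so your (vi) bookkeeping is right --- but only once (v) is actually established.
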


First we prove that any one of these curves has no self-intersection.

\begin{lemma}
\label{lemma-14}The function $C\mapsto \tau(C)$ is one-to-one whenever $C$ is
restricted in one of $(-\infty,0)$, $(0,1)$ and $(1,+\infty)$.
\end{lemma}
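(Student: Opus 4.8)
The plan is to combine the three-fold symmetry of $\tau(C)$ with the observation that, for a fixed target $\tau_0$, the equation $f_C(\tau_0)=0$ is only \emph{quadratic} in $C$. Indeed, from (\ref{i-54}) one may write $f_C(\tau)=(12\eta_1^2-g_2)C^2-(24\eta_1\eta_2-2g_2\tau)C+(12\eta_2^2-g_2\tau^2)$, and since $12\eta_1(\tau_0)^2-g_2(\tau_0)\neq0$ on $F_0$ by (\ref{eta1g2}), for every $\tau_0\in\mathring{F}_0$ the equation $f_C(\tau_0)=0$ has exactly two roots in $C$, namely the two branch values $\phi_\pm(\tau_0)=\tau_0-\tfrac{2\pi i}{\eta_1(\tau_0)\pm\sqrt{g_2(\tau_0)/12}}$ coming from (\ref{c=tau}). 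Thus at most two real parameters can share a common image $\tau(C)=\tau_0$, and a self-intersection of one of the curves is possible only when both roots $\phi_+(\tau_0)$ and $\phi_-(\tau_0)$ are real and lie in the \emph{same} one of the intervals $(-\infty,0)$, $(0,1)$, $(1,+\infty)$. So the lemma is equivalent to the statement that the two branch values never fall simultaneously into a single interval.

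I would first reduce the three intervals to one. By Lemma \ref{lemma-13} the M\"{o}bius map $C\mapsto\tfrac{1}{1-C}$ cyclically permutes $(-\infty,0)$, $(0,1)$ and $(1,+\infty)$ and satisfies $\tau(\tfrac{1}{1-C})=\tfrac{1}{1-\tau(C)}$; since $w\mapsto\tfrac{1}{1-w}$ is injective on $\mathbb{H}$, injectivity of $C\mapsto\tau(C)$ on any single interval transfers to the other two. Hence it suffices to treat $C\in(0,1)$, i.e. the curve $\mathcal{C}_0$. To organize the argument I would also record the reflection symmetry $\tau(C)=1-\overline{\tau(1-C)}$: the real $q$-expansions (\ref{ex-1}) and (\ref{ex-2}) give $\eta_1(-\bar\tau)=\overline{\eta_1(\tau)}$, $\eta_2(-\bar\tau)=-\overline{\eta_2(\tau)}$ and $g_2(-\bar\tau)=\overline{g_2(\tau)}$, from which a direct substitution yields $f_C(1-\bar\tau)=\overline{f_{1-C}(\tau)}$, and then the uniqueness in Theorem \ref{Unique-pole}-(1) forces the reflection. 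Consequently double points of $\mathcal{C}_0$ occur in reflected pairs $\tau_0,\,1-\overline{\tau_0}$ with parameters $\{C_1,C_2\}$ and $\{1-C_1,1-C_2\}$; combined with the quadratic bound this already handles any double point lying on $\{\operatorname{Re}\tau=\tfrac12\}$, where $\eta_1,g_2$ are real and the two branch values are symmetric about $\tfrac12$, so that their coincidence would contradict the simplicity of the zeros of $f_C$ recorded in (\ref{simplezero}).

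The remaining, and main, obstacle is to exclude an off-axis point $\tau_0\in\mathring{F}_0$ at which $\phi_+(\tau_0)$ and $\phi_-(\tau_0)$ are both real and both lie in $(0,1)$. Here I would exploit the Vieta relations $\phi_+\phi_-=f_0(\tau_0)/(12\eta_1^2-g_2)$ and $(\phi_+-1)(\phi_--1)=f_1(\tau_0)/(12\eta_1^2-g_2)$ together with $\phi_++\phi_-=2\tau_0+2\eta_1/\eta_1'$ (the last via (\ref{d-eta1})), noting that $f_0$ and $f_1$ are zero-free on $F_0$ by Theorem \ref{Unique-pole}-(2). A short computation shows that the simultaneous reality of $\phi_\pm(\tau_0)$ forces $\operatorname{Re}\bigl(\eta_1(\tau_0)\pm\sqrt{g_2(\tau_0)/12}\bigr)>0$; the crux is then the sign bookkeeping that rules out the pattern ``$\phi_+\phi_->0$, $(\phi_+-1)(\phi_--1)>0$ and $0<\phi_++\phi_-<2$'' which alone would place both values in $(0,1)$. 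Extracting these definite signs from the complex-valued quantities above is the delicate point, and I expect it to be the real content of the proof; once it is established, the cyclic reduction of the first paragraph completes the argument for all three curves.
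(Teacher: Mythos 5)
There is a genuine gap, and you half-acknowledge it yourself: the entire off-axis case is left as ``sign bookkeeping \dots\ the real content of the proof.'' That step does not follow from what you have assembled. Theorem \ref{Unique-pole}-(2) says only that the \emph{complex} numbers $f_0(\tau_0)$ and $f_1(\tau_0)$ are nonzero on $F_0$; this gives no control over the \emph{signs} of the Vieta quantities $\phi_+\phi_-$ and $(\phi_+-1)(\phi_--1)$ at a hypothetical point where both branch values happen to be real, and no continuity/connectedness mechanism is offered to propagate a sign. Worse, your dismissal of the on-axis case is incorrect. The simple-zero property (\ref{simplezero}) concerns $f_C(\cdot)$ as a function of $\tau$ for \emph{fixed} $C$: a double zero there corresponds to the coincidence $\phi_+(\tau_0)=\phi_-(\tau_0)$, i.e.\ $g_2(\tau_0)=0$, which in $F_0$ happens only at $e^{\pi i/3}$. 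But a genuine on-axis double point of $\mathcal{C}_0$ would come from two \emph{distinct} real branch values $C$ and $1-C$, both in $(0,1)$, at a point $\tau_0=\frac12+ib$ with $b\in(\frac12,\frac{\sqrt3}{2})$, where $g_2<0$ and $\sqrt{g_2/12}$ is purely imaginary; there simultaneous reality of $\phi_\pm$ reduces to the single real condition (\ref{i9}), which \emph{does} occur (at the point $\tau_-$ of Theorem \ref{lemma-15}-(iv)). Deciding that at such a point the two values land in $(-\infty,0)$ and $(1,+\infty)$ rather than both in $(0,1)$ is precisely the hard analysis of Steps 1--6 in the proof of Theorem \ref{lemma-15}, together with the endpoint information of Lemma \ref{lemma-11}; it cannot be obtained from (\ref{simplezero}). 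Note also that your reduction goes in the unlucky direction: you reduce to $\mathcal{C}_0$, the one curve that actually meets the ray $\{\operatorname{Re}\tau=\frac12,\ \operatorname{Im}\tau\ge\frac{\sqrt3}{2}\}$ (at $\tau(\frac12)$), where the branch structure of $\sqrt{g_2/12}$ is worst.

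For comparison, the paper's proof avoids all pointwise sign analysis. Using (\ref{i-66}) and (\ref{eta1g2}), any point of $\mathcal{C}_-$ or $\mathcal{C}_+$ on the ray $\{\operatorname{Re}\tau=\frac12,\ \operatorname{Im}\tau\ge\frac{\sqrt3}{2}\}$ would force $C=\frac12$, which is impossible for $C\in(-\infty,0)\cup(1,+\infty)$; hence (\ref{i8}) holds, and on $F_0$ minus that ray one can fix a single-valued holomorphic branch of $\sqrt{g_2/12}$, making $\phi_+$ and $\phi_-$ single-valued and everywhere distinct (since $g_2\neq0$ there). Then the sets $D_1=\{C<0: C=\phi_-(\tau(C))\}$ and $D_2=\{C<0: C=\phi_+(\tau(C))\}$ are closed, disjoint, and cover the connected interval $(-\infty,0)$, so one of them is the whole interval; the resulting identity $C=\phi_{\epsilon}(\tau(C))$ recovers $C$ from $\tau(C)$ and injectivity is immediate. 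The same works on $(1,+\infty)$, and $(0,1)$ is handled by the transfer (\ref{i-65}) --- the opposite direction of your reduction. Your quadratic-in-$C$ observation is correct and is exactly equivalent to the paper's dichotomy $C=\phi_{\pm}(\tau(C))$, so the efficient way to complete your argument is to replace the Vieta sign program by this branch-plus-connectedness step.
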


\begin{proof}
We recall the following results (cf. \cite{LW}): when $\tau=\frac{1}{2}+ib$
with $b>0$,%
\begin{equation}
\eta_{1}(\tau),g_{2}(\tau)\in \mathbb{R}\text{ \  \ and \ }g_{2}(\tau)\left \{
\begin{array}
[c]{l}%
>0\text{ \ if \ }b>\frac{\sqrt{3}}{2},\\
=0\text{ \ if \ }b=\frac{\sqrt{3}}{2},\\
<0\text{ \ if \ }b\in \lbrack \frac{1}{2},\frac{\sqrt{3}}{2}).
\end{array}
\right.  \label{i-66}%
\end{equation}

As pointed out before, $f_{C}(\tau(C))=0$ is equivalent to%
\begin{equation}\label{phipm}
\text{either \ }C=\phi_{+}(\tau(C))=\tau(C)-\frac{2\pi i}{\eta_{1}%
(\tau(C))+\sqrt{g_{2}(\tau(C))/12}}%
\end{equation}
\[
\text{or \ }C=\phi_{-}(\tau(C))=\tau(C)-\frac{2\pi i}{\eta_{1}(\tau
(C))-\sqrt{g_{2}(\tau(C))/12}}.
\]
If $\tau(C)=\frac{1}{2}+ib(C)$ with $b(C)\geq \frac{\sqrt{3}}{2}$ for some
$C\in \mathbb{R}\backslash \{0,1\}$, then it follows from (\ref{i-66}) and (\ref{eta1g2}) that
$C=\phi_{\pm}(\tau(C))=\operatorname{Re}\tau(C)=\frac{1}{2}$. Therefore,%
\begin{equation}
\mathcal{C}_{-},\mathcal{C}_{+}\subset F_{0}\backslash \left \{  \tau=\tfrac
{1}{2}+ib\text{ }|\text{ }b\geq \tfrac{\sqrt{3}}{2}\right \}  . \label{i8}%
\end{equation}
Remark that $g_{2}(\tau)=0$ for $\tau\in F_{0}$ if and only if $\tau=\frac{1}{2}+\frac{\sqrt{3}}{2}i$. We restrict $\tau \in F_{0}\backslash \{ \tau=\tfrac{1}{2}+ib|
b\geq \tfrac{\sqrt{3}}{2}\}$ and fix a branch of $\sqrt{g_{2}(\tau)/12}$. Then it follows from (\ref{eta1g2}) that
both $\phi_{+}(\tau)$ and $\phi_{-}(\tau)$ are \emph{single-valued
holomorphic} functions in $F_{0}\backslash \{ \tau=\tfrac{1}{2}+ib$ $|$
$b\geq \tfrac{\sqrt{3}}{2}\}$. Define%
\[
D_{1}:=\{C\in(-\infty,0)|C=\phi_{-}(\tau(C))\},
\]%
\[
D_{2}:=\{C\in(-\infty,0)|C=\phi_{+}(\tau(C))\}.
\]
Since $\tau(C)$ is continuous as a function of $C$, we see that $D_{1},D_{2}$
are both closed subsets of $(-\infty,0)$, so%
\[
\text{either }D_{1}=(-\infty,0),D_{2}=\emptyset \text{ \ or \ }D_{1}%
=\emptyset,D_{2}=(-\infty,0).
\]
Without loss of generality we may assume $D_{1}=(-\infty,0)$. Then $C=\phi
_{-}(\tau(C))$ for any $C\in(-\infty,0)$. This proves that $(-\infty,0)\ni
C\mapsto \tau(C)\in \mathcal{C}_{-}$ is one-to-one. Similarly, $(1,+\infty)\ni
C\mapsto \tau(C)\in \mathcal{C}_{+}$ is one-to-one. Finally, since%
\[
C\in(0,1)\Longleftrightarrow \frac{1}{1-C}\in(1,+\infty),
\]
it follows from (\ref{i-65}) that $(0,1)\ni C\mapsto \tau(C)\in \mathcal{C}_{0}$
is also one-to-one. The proof is complete.
\end{proof}

\begin{lemma}\label{lemma33}
Let $\tau=\frac{1}{2}+ib$ with $b\geq \frac{\sqrt{3}}{2}$ and recall (\ref{i-66}) that $g_2(\tau)\geq 0$. Then
\begin{equation}
\eta_{1}(\tau)-\sqrt{g_{2}(\tau)/12}<\frac{2\pi}{b}. \label{ex}%
\end{equation}

\end{lemma}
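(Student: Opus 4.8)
The plan is to set $\tau=\frac12+ib$ and reduce the inequality to an estimate in the single real variable $t:=e^{-2\pi b}=|q|$. Since $b\ge\frac{\sqrt3}2$ we have $t\le t_0:=e^{-\sqrt3\pi}<\frac1{200}$, so $t$ is small and all $q$-series converge rapidly. Because $g_2(\tau)\ge0$ on this segment (recall (\ref{i-66})) and $\eta_1(\tau)>0$, I would first rationalize the left-hand side,
\[
\eta_1-\sqrt{g_2/12}=\frac{\eta_1^2-\tfrac1{12}g_2}{\eta_1+\sqrt{g_2/12}}.
\]
The point of this step is that the numerator is exactly the quantity in (\ref{d-eta1}); it exhibits the essential cancellation, since both $\eta_1$ and $\sqrt{g_2/12}$ tend to $\frac{\pi^2}3$ as $b\to\infty$ while their difference is exponentially small, whereas $\frac{2\pi}b$ decays only polynomially.

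Next I would compute the numerator. Writing $\psi(b):=\eta_1(\frac12+ib)$ and using $\frac{d\tau}{db}=i$ together with (\ref{d-eta1}), one gets $\psi'(b)=-\frac1{2\pi}(\eta_1^2-\tfrac1{12}g_2)$, that is $\eta_1^2-\tfrac1{12}g_2=-2\pi\psi'(b)$. Differentiating the $q$-expansion (\ref{ex-1}) and substituting $q=-t$ gives
\[
-2\pi\psi'(b)=32\pi^4\sum_{k\ge1}(-1)^{k+1}k\,b_k\,t^k\le 32\pi^4\sum_{k\ge1}k\,b_k\,t^k,
\]
the last inequality holding because every term $k\,b_k\,t^k$ is nonnegative. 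For the denominator I would use the crude but sufficient bound $\eta_1+\sqrt{g_2/12}\ge\eta_1\ge\frac{\pi^2}3$, which follows from (\ref{ex-1}) since $\eta_1-\frac{\pi^2}3=8\pi^2(t-3t^2+\cdots)>0$ for $t\le t_0$ (the first term dominates the rapidly converging tail). Combining these, I obtain
\[
\eta_1-\sqrt{g_2/12}\le 96\pi^2\sum_{k\ge1}k\,b_k\,t^k,
\]
valid regardless of the sign of the numerator (if it is negative the left side is already negative, hence below the positive right side).

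It then remains to prove $96\pi^2\sum_{k\ge1}k\,b_k\,t^k<\frac{2\pi}b$, i.e. $\sum_{k\ge1}k\,b_k\,t^k<\frac1{48\pi b}$. Using $b_k=\sum_{1\le d|k}d\le k^2$ I would bound $\sum_{k\ge1}k\,b_k\,t^k\le\sum_{k\ge1}k^3t^k=\frac{t(1+4t+t^2)}{(1-t)^4}$, so it suffices to check $48\pi b\cdot\frac{t(1+4t+t^2)}{(1-t)^4}<1$ for $b\ge\frac{\sqrt3}2$. Since $\frac{d}{db}(be^{-2\pi b})=(1-2\pi b)e^{-2\pi b}<0$ on this range, the product $bt=be^{-2\pi b}$ is decreasing and attains its maximum $\frac{\sqrt3}2e^{-\sqrt3\pi}$ at $b=\frac{\sqrt3}2$, while $\frac{1+4t+t^2}{(1-t)^4}\le\frac{1+4t_0+t_0^2}{(1-t_0)^4}$; a direct numerical evaluation yields a value below $0.6$, comfortably less than $1$. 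This completes the argument.

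The only genuinely delicate point is the bookkeeping of constants so that the final numerical comparison has the correct sign; there is no conceptual obstacle, and the slack (a factor of roughly $1.7$) is generous. The worst case occurs precisely at the corner $b=\frac{\sqrt3}2$, where $\tau=e^{\pi i/3}$ and $g_2=0$, so the inequality there reduces to the already-known value $\eta_1(e^{\pi i/3})=\frac{2\pi}{\sqrt3}<\frac{4\pi}{\sqrt3}=\frac{2\pi}b$ from (\ref{eta1rho}), giving independent confirmation that the estimate is sharp only in order of magnitude, not in constant.
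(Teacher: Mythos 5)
Your proof is correct, and it takes a genuinely different route from the paper's. Your key move is to rationalize, writing $\eta_1-\sqrt{g_2/12}=\bigl(\eta_1^2-\tfrac{1}{12}g_2\bigr)/\bigl(\eta_1+\sqrt{g_2/12}\bigr)$, so that $g_2$ enters only through the combination $\eta_1^2-\tfrac{1}{12}g_2=-2\pi\tfrac{d}{db}\eta_1(\tfrac12+ib)$ supplied by (\ref{d-eta1}) and through the sign information $g_2\geq 0$ from (\ref{i-66}); a single differentiated $q$-series bound $32\pi^4\sum_k k\,b_k t^k$ (with $t=e^{-2\pi b}$), the crude denominator bound $\eta_1\geq\pi^2/3$, and one uniform numerical check then settle the whole range $b\geq\tfrac{\sqrt3}{2}$ at once. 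I verified the details: the sign discussion when the numerator is negative is handled correctly, $b_k\leq k(k+1)/2\leq k^2$, $\sum_{k\geq1}k^3t^k=t(1+4t+t^2)/(1-t)^4$, and since $bt=be^{-2\pi b}$ is decreasing for $b\geq\tfrac{\sqrt3}{2}$ one gets $48\pi b\,t(1+4t+t^2)/(1-t)^4\leq 48\pi\cdot\tfrac{\sqrt3}{2}e^{-\sqrt3\pi}\cdot(1+4t_0+t_0^2)/(1-t_0)^4\approx 0.59<1$, as you claim, with the inequality strict as required. The paper argues differently, in two regimes: for $b\in[\tfrac{\sqrt3}{2},\sqrt3)$ it discards $\sqrt{g_2/12}\geq0$ entirely and uses the monotonicity of $b\mapsto\eta_1(\tfrac12+ib)$ (quoted from Corollary \ref{line1/2}, hence resting on the critical-point machinery of Theorem \ref{Coro-1}) together with the value (\ref{eta1rho}); for large $b$ it proves separate two-sided estimates, (\ref{ex-3}) for $\eta_1$ and (\ref{ex-6}) for $\sqrt{g_2(\tau)/12}$, concluding $\eta_1-\sqrt{g_2/12}<78\pi^2e^{-2\pi b}<2\pi/b$. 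What the paper's route buys is precisely those auxiliary inequalities, which are reused verbatim in the proof of Theorem \ref{lemma-15}-(iii); what yours buys is self-containedness and uniformity: it needs only (\ref{ex-1}), (\ref{d-eta1}) and the sign of $g_2$, avoids the case split, and in particular makes no appeal to Corollary \ref{line1/2} --- a real simplification of the logical structure, since the paper's chain (Lemma \ref{lemma33} cites Corollary \ref{line1/2}, whose proof cites Theorem \ref{lemma-15}-(iii), whose proof cites Lemma \ref{lemma33}) is non-circular only after one checks that the part of Theorem \ref{lemma-15}-(iii) needed for Corollary \ref{line1/2}'s bound $b_0<\tfrac{1}{2\sqrt3}$ (namely $\operatorname{Im}\tau(\tfrac12)>\tfrac{\sqrt3}{2}$) is established before Lemma \ref{lemma33} is invoked there; your argument sidesteps that entanglement altogether.
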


\begin{proof}
Clearly $q=e^{2\pi i\tau}=-e^{-2\pi b}$. It follows from the $q$-expansions (\ref{ex-1}) and (\ref{ex-2}) that
\begin{equation}\label{1/2-eta}
\eta_{1}(\tau)=\frac{\pi^{2}}{3}-8\pi^{2}\sum_{k=1}^{\infty}(-1)^{k}b_{k}e^{-2k\pi b},\quad b_{k}=\sum_{1\leq d|k}d,
\end{equation}%
\begin{equation}\label{1/2-g}
g_{2}(\tau)=\frac{4}{3}\pi^{4}+320\pi^{4}\sum_{k=1}^{\infty}(-1)^{k}\sigma
_{3}(k)e^{-2k\pi b},\;\; \sigma_{3}(k)=\sum_{1\leq d|k}d^{3}.
\end{equation}
Since $b\geq \frac{\sqrt{3}}{2}$, we have $e^{\pi b}>15$. It is easy to prove
that%
\[
b_{k}<15^{\frac{k}{4}}<e^{\frac{k\pi b}{4}},\text{ \ }\sigma_{3}(k)\leq
b_{k}^{3}<e^{\frac{3k\pi b}{4}},\text{ \ }\forall k\geq1.
\]
Consequently,%
\[
\sum_{k=3}^{\infty}b_{k}e^{-2k\pi b}\leq \sum_{k=3}^{\infty}e^{-\frac{7}{4}k\pi
b}=\frac{e^{-\frac{21}{4}\pi b}}{1-e^{-\frac{7}{4}\pi b}}<e^{-4\pi b},
\]
i.e.%
\begin{align*}
  -\sum_{k=1}^{\infty}(-1)^{k}b_{k}e^{-2k\pi b}
=e^{-2\pi b}-3e^{-4\pi b}-\sum_{k=3}^{\infty}(-1)^{k}b_{k}e^{-2k\pi b}%
\in(0,e^{-2\pi b}).
\end{align*}
From here and (\ref{1/2-eta}) we obtain%
\begin{equation}
\frac{\pi^{2}}{3}<\eta_{1}(\tau)<\frac{\pi^{2}}{3}+8\pi^{2}e^{-2\pi b},\quad\forall\,b\geq\frac{\sqrt{3}}{2}.
\label{ex-3}%
\end{equation}
Together with Corollary \ref{line1/2} that $\frac{d}{db}\eta_{1}(\frac{1}%
{2}+ib)\not =0$ for $b\geq \frac{1}{2\sqrt{3}}$, we conclude that \begin{equation}\label{de-eta}\frac{d}%
{db}\eta_{1}(\tfrac{1}{2}+ib)<0\quad\text{ for}\quad b\geq \frac{1}{2\sqrt{3}},\end{equation} so (\ref{eta1rho}) gives
\[
\eta_{1}(\tfrac{1}{2}+ib)\leq \eta_{1}(e^{i\pi/3})=\frac{2\pi}{\sqrt{3}}\;\text{ for any
}b\geq \frac{\sqrt{3}}{2}.
\]
Since $g(\tau)\geq0$ for $b\geq \frac{\sqrt{3}}{2}$, we see that (\ref{ex})
holds for any $b\in \lbrack \frac{\sqrt{3}}{2},\sqrt{3})$.

Now we assume $b\geq \frac{6}{5}$ (indeed $b\geq \sqrt{3}$ is enough,
here we assume $b\geq \frac{6}{5}$ for later use). Then $e^{\pi b}>40$.
Similarly we have%
\[
\sum_{k=2}^{\infty}\sigma_{3}(k)e^{-2k\pi b}\leq \sum_{k=2}^{\infty}%
e^{-\frac{5}{4}k\pi b}=\frac{e^{-\frac{5}{2}\pi b}}{1-e^{-\frac{5}{4}\pi b}%
}<\frac{1}{2}e^{-2\pi b},
\]
i.e.%
\[
\sum_{k=1}^{\infty}(-1)^{k}\sigma_{3}(k)e^{-2k\pi b}=-e^{-2\pi b}+\sum
_{k=2}^{\infty}(-1)^{k}\sigma_{3}(k)e^{-2k\pi b}>-\frac{3}{2}e^{-2\pi b}.
\]
Thus (\ref{1/2-g}) and $e^{\pi b}>40$ give%
\[
g_{2}(\tau)>\frac{4}{3}\pi^{4}\left(  1-360e^{-2\pi b}\right)  >\frac{4}{3}%
\pi^{4}(1-210e^{-2\pi b})^{2},
\]
i.e.%
\begin{equation}
\sqrt{g_{2}(\tau)/12}>\frac{\pi^{2}}{3}(1-210e^{-2\pi b}),\text{ \  \ }\forall
b\geq \frac{6}{5}. \label{ex-6}%
\end{equation}
Together with (\ref{ex-3}), we obtain%
\[
\eta_{1}(\tau)-\sqrt{g_{2}(\tau)/12}<78\pi^{2}e^{-2\pi b},\text{ \  \ }\forall
b\geq \frac{6}{5}.
\]
Since it is trivial to see that $78\pi^{2}e^{-2\pi b}<\frac{2\pi}{b}$ for
$b\geq \sqrt{3}$, we conclude that (\ref{ex}) holds for any $b\geq \sqrt{3}$.
This completes the proof.
\end{proof}

\begin{proof}
[Proof of Theorem \ref{lemma-15}](i) is just Lemmas \ref{lemma-11} and \ref{lemma-14}.

(ii). We will prove in Theorem \ref{thm-12} below that $\eta_{1}(1-\bar{\tau
})=\overline{\eta_{1}(\tau)}$ and $\eta_{2}(1-\bar{\tau})=\overline{\eta
_{1}(\tau)}-\overline{\eta_{2}(\tau)}$. By the $q$-expansion (\ref{ex-2}) we also have $g_{2}(1-\bar{\tau})=\overline{g_{2}(\tau)}$.  Since $C\in \mathbb{R}\backslash
\{0,1\}$, we easily obtain%
\begin{align*}
  f_{1-C}(1-\bar{\tau})
=&12\left(  (1-C)\eta_{1}(1-\bar{\tau})-\eta_{2}(1-\bar{\tau})\right)
^{2}-g_{2}(1-\bar{\tau})(C-\bar{\tau})^{2}\\
=&12(C\overline{\eta_{1}(\tau)}-\overline{\eta_{2}(\tau)})^{2}%
-\overline{g_{2}(\tau)}(C-\bar{\tau})^{2}=\overline{f_{C}(\tau)}.
\end{align*}
Therefore, it follows from Theorem \ref{Unique-pole}-(1) that
\begin{equation}
\tau(1-C)=1-\overline{\tau(C)}. \label{i-67}%
\end{equation}
Since $\tau$ and $1-\bar{\tau}$ is symmetric with respect to the line
$\operatorname{Re}\tau=\frac{1}{2}$, we see that assertion (ii) holds.

(iii). By (ii), $\mathcal{C}_{0}$ has intersections with the line
$\operatorname{Re}\tau=\frac{1}{2}$. Let $\tau_{0}=\frac{1}{2}+ib$ be such an
intersection point. Then $\tau_{0}=\tau(C)$ for a unique $C\in(0,1)$. Applying
(\ref{i-67}), we have $\tau(1-C)=\tau_{0}=\tau(C)$, so Lemma \ref{lemma-14}
gives $1-C=C$, i.e. $C=\frac{1}{2}$. This proves that $\tau_{0}=\tau(\frac
{1}{2})$ is the unique intersection point of the curve $\mathcal{C}_{0}$ with
the line $\operatorname{Re}\tau=\frac{1}{2}$. By (\ref{phipm}),%
\begin{align*}
\frac{1}{2}  &  =\phi_{\pm}(\tau_{0})=\tau_{0}-\frac{2\pi i}{\eta_{1}(\tau
_{0})\pm \sqrt{g_{2}(\tau_{0})/12}}\\
&  =\frac{1}{2}+ib-\frac{2\pi i}{\eta_{1}(\tau_{0})\pm \sqrt{g_{2}(\tau
_{0})/12}},
\end{align*}
we see that $\eta_{1}(\tau_{0})\pm \sqrt{g_{2}(\tau_{0})/12}=\frac{2\pi}{b}$ is
real, so $g_{2}(\tau_{0})\geq0$, i.e. $b=\operatorname{Im}\tau_{0}\geq
\frac{\sqrt{3}}{2}$. Since (\ref{eta1rho}) gives $\eta_{1}(e^{\pi i/3})=\frac{2\pi}{\sqrt{3}}%
\not =\frac{4\pi}{\sqrt{3}}$, we obtain $b=\operatorname{Im}\tau_{0}%
>\frac{\sqrt{3}}{2}$. Then $\operatorname{Im}\frac{\tau(1/2)-1}{2\tau(1/2)-1}<\frac{1}{2\sqrt{3}}$ and so Lemma \ref{lemma33} applies. In particular, (\ref{ex}) infers
\begin{equation}
\eta_{1}(\tau_{0})+\sqrt{g_{2}(\tau_{0})/12}=\frac{2\pi}{b}. \label{ex-7}%
\end{equation}
Suppose $b=\operatorname{Im}\tau_{0}\geq \frac{6}{5}$. Then (\ref{ex-3}),
(\ref{ex-6}) and (\ref{ex-7}) imply%
\[
\frac{2\pi^{2}}{3}-70\pi^{2}e^{-2\pi b}<\frac{2\pi}{b}\leq \frac{5\pi}{3},
\]
which is equivalent to $\frac{210\pi}{2\pi-5}>e^{2\pi b}$, clearly a
contradiction with $e^{\pi b}\geq e^{6\pi/5}>40$. Thus $b=\operatorname{Im}%
\tau_{0}<\frac{6}{5}$. This proves (iii).

(iv)-(v). Note from (\ref{i8}) that if $\mathcal{C}_{-}$ (resp. $\mathcal{C}%
_{+}$) has a intersection point $\tau$ with the line $\operatorname{Re}%
\tau=\frac{1}{2}$, then $\operatorname{Im}\tau \in \lbrack \frac{1}{2}%
,\frac{\sqrt{3}}{2})$. Assume $\tau=\frac{1}{2}+ib$ with $b\in \lbrack \frac
{1}{2},\frac{\sqrt{3}}{2})$ is a intersection point of $\mathcal{C}_{-}$ with
the line $\operatorname{Re}\tau=\frac{1}{2}$. Then (\ref{i-66}) gives
$\sqrt{g_{2}(\tau)/12}=\pm i\sqrt{|g_{2}(\tau)|/12}$. Clearly there exists
$C\in(-\infty,0)$ such that $\tau=\tau(C)$, which implies $C=\phi_{\pm}(\tau
)$, i.e.
{\allowdisplaybreaks
\begin{align*}
C  &  =\tau-\frac{2\pi i}{\eta_{1}(\tau)\pm i\sqrt{|g_{2}(\tau)|/12}}\\
&  =\frac{1}{2}+bi-\frac{2\pi i(\eta_{1}(\tau)\mp i\sqrt{|g_{2}(\tau)|/12}%
)}{\eta_{1}(\tau)^{2}-\frac{g_{2}(\tau)}{12}}.
\end{align*}
}%
Thus
\begin{equation}
b-\frac{2\pi \eta_{1}(\tau)}{\eta_{1}(\tau)^{2}-\frac{g_{2}(\tau)}{12}}=0.
\label{i9}%
\end{equation}
For $\tau=\frac{1}{2}+ib$ with $b\in \lbrack \frac{1}{2},\frac{\sqrt{3}}{2}]$,
we define%
\[
\theta(b):=\frac{b\eta_{1}(\frac{1}{2}+ib)}{2\pi}\;\text{ and }\;\theta
_{1}(b):=\frac{\eta_{1}(\frac{1}{2}+ib)^{2}}{\eta_{1}(\frac{1}{2}%
+ib)^{2}-\frac{g_{2}(\frac{1}{2}+ib)}{12}}.
\]
Then (\ref{i9}) shows that if $\tau=\frac{1}{2}+ib$ with $b\in \lbrack \frac
{1}{2},\frac{\sqrt{3}}{2})$ is a intersection point of $\mathcal{C}_{-}$ with
the line $\operatorname{Re}\tau=\frac{1}{2}$, then%
\begin{equation}
\theta(b)-\theta_{1}(b)=0. \label{i10}%
\end{equation}

Now we want to prove that (\ref{i10}) has a unique solution in $[\frac{1}%
{2},\frac{\sqrt{3}}{2}]$. For this, first we note from (\ref{de-eta}) that
\begin{equation}
\eta_{1}(\tfrac{1}{2}+bi)>\pi^2/3\quad\text{ for }\quad b\geq \tfrac{1}{2\sqrt{3}}. \label{i22}%
\end{equation}%
We also recall the following fundamental
results (cf. \cite{LW}):%
\begin{equation}
e_{1}(\tfrac{1}{2}+\tfrac{1}{2}i)=0\text{ and }e_{1}(\tfrac{1}{2}+bi)>0\text{
for }b>\tfrac{1}{2}, \label{i22-1}%
\end{equation}%
\begin{equation}
g_{3}(\tfrac{1}{2}+bi)=4e_{1}(\tfrac{1}{2}+bi)|e_{2}(\tfrac{1}{2}%
+bi)|^{2}>0\text{ for }b\in(\tfrac{1}{2},\tfrac{\sqrt{3}}{2}], \label{i24}%
\end{equation}%
\begin{equation}
g_{2}(\tfrac{1}{2}+bi)^{3}-27g_{3}(\tfrac{1}{2}+bi)^{2}<0\text{ for }%
b\geq \tfrac{1}{2}. \label{i23}%
\end{equation}

By using (see e.g. \cite[p.704]{YB} or \cite[Appendix B]{CKL2})%
\begin{equation}
\frac{d}{db}\eta_{1}(\tfrac{1}{2}+bi)=i\eta_{1}^{\prime}(\tau)=\frac{-1}{4\pi
}(2\eta_{1}^{2}-\tfrac{1}{6}g_{2}), \label{i30}%
\end{equation}%
\[
\frac{d}{db}g_{2}(\tfrac{1}{2}+bi)=ig_{2}^{\prime}(\tau)=\frac{1}{\pi}%
(3g_{3}-2\eta_{1}g_{2}),
\]
\begin{equation}
\frac{d}{db}g_{3}(\tfrac{1}{2}+bi)=ig_{3}^{\prime}(\tau)=\frac{1}{\pi}%
(-3g_{3}\eta_{1}+\tfrac{1}{6}g_{2}^{2}), \label{i31}%
\end{equation}
we easily obtain%
\begin{align}
\theta^{\prime}(b)    =\frac{\eta_{1}}{2\pi}-\frac{b}{8\pi^{2}}(2\eta_{1}^{2}-\tfrac{1}{6}g_{2})
  =\frac{\eta_{1}(\frac{1}{2}+ib)}{2\pi \theta_{1}(b)}(\theta_{1}
(b)-\theta(b)),\label{i26}
\end{align}%
{\allowdisplaybreaks
\begin{align}
\theta_{1}^{\prime}(b)  &  =\frac{1}{12}\frac{d}{db}\bigg(  \frac{g_{2}%
(\frac{1}{2}+ib)}{\eta_{1}(\frac{1}{2}+ib)^{2}-\frac{g_{2}(\frac{1}{2}%
+ib)}{12}}\bigg) \label{i27}\\
&  =\frac{\eta_{1}}{12\pi(\eta_{1}^{2}-\frac{g_{2}}{12})^{2}}\left(
3g_{3}\eta_{1}-\eta_{1}^{2}g_{2}-\tfrac{1}{12}g_{2}^{2}\right)  .\nonumber
\end{align}
}%

\textbf{Step 1.} We claim that $\theta^{\prime}(\tfrac{1}{2})<0$.

Indeed, by (\ref{eta1i}) and $\eta_2(\tau)=\tau\eta_1(\tau)-2\pi i$ we obtain \[\eta_{1}(i)=\pi\quad\text{  and  }\quad \eta_{2}(i)=-\pi i,\]
which imply%
\[
\eta_{1}(\tfrac{1}{2}+\tfrac{1}{2}i)=\eta_{1}(\tfrac{1}{1-i})=(1-i)(\eta
_{1}(i)-\eta_{2}(i))=2\pi.
\]
Recalling Lemma \ref{lemma-etai} that $g_{2}(i)>12\eta_{1}(i)^{2}=12\pi^{2}$, we have%
\[
g_{2}(\tfrac{1}{2}+\tfrac{1}{2}i)=g_{2}(\tfrac{1}{1-i})=(1-i)^{4}%
g_{2}(i)=-4g_{2}(i)<-48\pi^{2},
\]
so%
\[
0<\theta_{1}(\tfrac{1}{2})=\frac{\eta_{1}(\tfrac{1}{2}+\tfrac{1}{2}i)^{2}}%
{\eta_{1}(\tfrac{1}{2}+\tfrac{1}{2}i)^{2}-\frac{g_{2}(\tfrac{1}{2}+\tfrac
{1}{2}i)}{12}}<\tfrac{1}{2}=\theta(\tfrac{1}{2}).
\]
This, together with (\ref{i26}), proves
$\theta^{\prime}(\tfrac{1}{2})<0$.\medskip

\textbf{Step 2. }We claim that for any $b\in(\frac{1}{2},\frac{\sqrt{3}}{2})$
satisfying $\theta_{1}^{\prime}(b)=0$, there holds $\psi^{\prime}(b)>0$, where%
\[
\psi(b):=(3g_{3}\eta_{1}-\eta_{1}^{2}g_{2}-\tfrac{1}{12}g_{2}^{2})(\tfrac
{1}{2}+bi).
\]

Applying (\ref{i30})-(\ref{i31}), a straightforward computation leads to%
\[
\psi^{\prime}(b)=\frac{1}{\pi}\left(  3g_{2}\eta_{1}^{3}-\tfrac{27}{2}%
g_{3}\eta_{1}^{2}+\tfrac{3}{4}g_{2}^{2}\eta_{1}-\tfrac{3}{8}g_{2}g_{3}\right)
.
\]
By (\ref{i22}), (\ref{i27}) and $\theta_{1}^{\prime}(b)=0$, we have $\psi(b)=0$, i.e.%
\[
\eta_{1}^{2}g_{2}=3g_{3}\eta_{1}-\tfrac{1}{12}g_{2}^{2}.
\]
Inserting this into the term $3g_{2}\eta_{1}^{3}$ of $\psi^{\prime}(b)$, we
easily obtain%
\[
\psi^{\prime}(b)=\frac{1}{\pi}\left(  -\tfrac{9}{2}g_{3}\eta_{1}^{2}+\tfrac
{1}{2}g_{2}^{2}\eta_{1}-\tfrac{3}{8}g_{2}g_{3}\right)  ,
\]
and so%
\begin{align*}
g_{2}\psi^{\prime}(b)  &  =\frac{1}{\pi}\left(  -\tfrac{9}{2}g_{3}(3g_{3}%
\eta_{1}-\tfrac{1}{12}g_{2}^{2})+\tfrac{1}{2}g_{2}^{3}\eta_{1}-\tfrac{3}%
{8}g_{2}^{2}g_{3}\right) \\
&  =\frac{\eta_{1}}{2\pi}(g_{2}^{3}-27g_{3}^{2}).
\end{align*}
Applying (\ref{i-66}), (\ref{i22}) and (\ref{i23}) we conclude $\psi^{\prime}(b)>0$.\medskip

\textbf{Step 3. }As a direct consequence of Step 2 and (\ref{i27}), we have
that if $\theta_{1}^{\prime}(b_{0})\geq0$ for some $b_{0}\in(\frac{1}{2}%
,\frac{\sqrt{3}}{2})$, then $\theta_{1}^{\prime}(b)>0$ for any $b\in
(b_{0},\frac{\sqrt{3}}{2})$.\medskip

\textbf{Step 4. }We claim that there exists $b_{1}\in(\frac{1}{2},\frac
{\sqrt{3}}{2})$ such that $\theta^{\prime}(b)<0$ for $b\in \lbrack \frac{1}%
{2},b_{1})$, $\theta^{\prime}(b_{1})=0$ and $\theta^{\prime}(b)>0$ for
$b-b_{1}>0$ small. Consequently,%
\begin{equation}
\theta_{1}^{\prime}(b_{1})\geq0\text{ and }\theta_{1}^{\prime}(b)>0\text{ for
any }b\in(b_{1},\tfrac{\sqrt{3}}{2}). \label{i37}%
\end{equation}

Since $\eta_{1}(\frac{1}{2}+\frac{\sqrt{3}}{2}i)=\frac{2\pi}{\sqrt{3}}$ gives
$\theta(\frac{\sqrt{3}}{2})=\frac{1}{2}=\theta(\frac{1}{2})$, Step 1 implies
that there exists $b_{1}\in(\frac{1}{2},\frac{\sqrt{3}}{2})$ such that
$\theta^{\prime}(b)<0$ for $b\in \lbrack \frac{1}{2},b_{1})$ and $\theta
^{\prime}(b_{1})=0$. Suppose $\theta^{\prime}(b)\leq0$ for $b-b_{1}>0$
sufficiently small, then (\ref{i26}) says that $\theta_{1}(b)-\theta(b)$ has a
local maximum $0$ at $b_{1}$, so $\theta_{1}^{\prime}(b_{1})=0$ and then Step
3 gives $\theta_{1}^{\prime}(b)>0$ for $b-b_{1}>0$ small. However, this
implies $\theta_{1}^{\prime}(b)-\theta^{\prime}(b)>0$ for $b-b_{1}>0$
sufficiently small, which contradicts with that $\theta_{1}(b)-\theta(b)$ has
a local maximum $0$ at $b_{1}$. Therefore, $\theta^{\prime}(b)>0$ for
$b-b_{1}>0$ small. This also gives $\theta_{1}(b)-\theta(b)>0$ for $b-b_{1}>0$
small. Since $\theta_{1}(b_{1})-\theta(b_{1})=0$, we obtain $\theta
_{1}^{\prime}(b_{1})=\theta_{1}^{\prime}(b_{1})-\theta^{\prime}(b_{1})\geq0$ and so (\ref{i37}) holds.\medskip

\textbf{Step 5. }We claim that $\theta^{\prime}(b)>0$ for $b\in(b_{1}%
,\frac{\sqrt{3}}{2}]$.

Since $g_{2}(\frac{1}{2}+\frac{\sqrt{3}}{2}i)=0$ gives $\theta_{1}(\frac
{\sqrt{3}}{2})=1>\theta(\frac{\sqrt{3}}{2})$, we have $\theta^{\prime}%
(\frac{\sqrt{3}}{2})>0$. Suppose there exists $b_{2}\in(b_{1},\frac{\sqrt{3}%
}{2})$ such that $\theta^{\prime}(b_{2})=0$ and $\theta^{\prime}(b)>0$ for
$b\in(b_{1},b_{2})$. Then (\ref{i26}) gives $\theta_{1}(b)-\theta(b)>0$ for
$b\in(b_{1},b_{2})$ and $\theta_{1}(b_{2})-\theta(b_{2})=0$, i.e. $\theta
_{1}^{\prime}(b_{2})-\theta^{\prime}(b_{2})\leq0$. However, by (\ref{i37}) we
have $\theta_{1}^{\prime}(b_{2})-\theta^{\prime}(b_{2})=\theta_{1}^{\prime}(b_{2})>0$, a
contradiction.\medskip

\textbf{Step 6. }We finish the proof of assertions (iv)-(v).

By Steps 4-5 and (\ref{i26}), $\theta_{1}(b)-\theta(b)=0$ has a unique
solution $b_{1}$ in $[\frac{1}{2},\frac{\sqrt{3}}{2}]$ and $b_1 \in(\frac{1}{2},\frac
{\sqrt{3}}{2})$. Together with
(\ref{i10}), we see that $\mathcal{C}_{-}$ has at most one intersection point
$\frac{1}{2}+ib_{1}$ with the line $\operatorname{Re}\tau=\frac{1}{2}$. Since
Lemma \ref{lemma-11} implies that $\mathcal{C}_{-}$ must intersect with the
line $\operatorname{Re}\tau=\frac{1}{2}$, we conclude that $\tau_{-}:=\frac
{1}{2}+ib_{1}$ is the unique intersection point of $\mathcal{C}_{-}$ with the
line $\operatorname{Re}\tau=\frac{1}{2}$. Finally, it follows from assertion
(ii) that $\tau_{-}$ is also the unique intersection point of $\mathcal{C}%
_{+}$ with the line $\operatorname{Re}\tau=\frac{1}{2}$ (resp. $\mathcal{C}%
_{-}$).

(vi). Clearly this follows readily from the assertion (v) and (\ref{i-65}).
The proof is complete.
\end{proof}

\section{Geometric interpretation and smoothness of the curves}

\label{geometricinter}
The purpose of this section is to give the geometric meaning of the three curves from the multiple Green function $G_{2}$.
As pointed out in Section 1, $\{(q_{\pm},-q_{\pm})|\wp(q_{\pm})=\pm \sqrt{g_{2}/12}\}$ are trivial critical points of $G_{2}$, and it was calculated in \cite[Example 4.2]{LW3} that the
Hessian is given by
\begin{equation}
\det D^{2}G_{2}(q_{\pm},-q_{\pm};\tau)=\frac{3|g_{2}(\tau)|}{4\pi
^{4}\operatorname{Im}\tau}|\wp(q_{\pm}|\tau)+\eta_{1}(\tau)|^{2}\operatorname{Im}%
\phi_{\pm}(\tau), \label{SZ-2}%
\end{equation}
where $\phi_{\pm}(\tau)$ is defined in (\ref{i-53}). Define
the following \emph{degeneracy curve} of $G_{2}$ on $F_{0}$ (i.e. consist of those $\tau$'s such that either $(q_+,-q_+)$ or $(q_-,-q_-)$ is a degenerate critical point of $G_2(\cdot,\cdot;\tau)$):\footnote{ $g_{2}(e^{\pi i/3})=0$ implies that the two trivial critical points $(q_{\pm},-q_{\pm})$ degenerate to one trivial critical point at $\tau=e^{\pi i/3}$, so we exclude the trivial case in the definition of $C_{+,-}$.}
\[C_{+,-}:=\left \{  \tau\in F_{0}\setminus\{e^{\pi i/3}\}\left \vert
\begin{array}
[c]{r}%
\det D^{2}G_{2}(q_{+},-q_{+};\tau)=0\\
\text{or }\det D^{2}G_{2}(q_{-},-q_{-};\tau)=0
\end{array}
\right.  \right \}.
\]

\begin{theorem}
\label{degeneracy} The degeneracy curve $C_{+,-}=\mathcal{C}_{-}\cup\mathcal{C}_{0}\cup\mathcal{C}_{+}$. In particular, $\mathcal{C}_{-}$, $\mathcal{C}_{0}$, $\mathcal{C}_{+}$ are all smooth curves.
\end{theorem}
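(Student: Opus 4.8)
The plan is to read the degeneracy condition straight off the Hessian formula (\ref{SZ-2}) and then match its zero set against the parametrization $C=\phi_{\pm}(\tau)$ of the three curves, reserving the regularity of the parametrization for the end.

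First I would verify that, for $\tau\in F_{0}\setminus\{e^{\pi i/3}\}$, every factor in (\ref{SZ-2}) besides $\operatorname{Im}\phi_{\pm}(\tau)$ is nonzero, so that
\[
\det D^{2}G_{2}(q_{\pm},-q_{\pm};\tau)=0\iff\operatorname{Im}\phi_{\pm}(\tau)=0.
\]
Here $\operatorname{Im}\tau>0$ and $g_{2}(\tau)\neq0$ (since $g_{2}$ vanishes in $F_{0}$ only at $e^{\pi i/3}$), so the prefactor $\frac{3|g_{2}|}{4\pi^{4}\operatorname{Im}\tau}$ is positive; moreover $\wp(q_{\pm}|\tau)+\eta_{1}(\tau)=\eta_{1}(\tau)\pm\sqrt{g_{2}(\tau)/12}$, and since
\[
12(\eta_{1}+\sqrt{g_{2}/12})(\eta_{1}-\sqrt{g_{2}/12})=12\eta_{1}^{2}-g_{2}\neq0
\]
on $F_{0}$ by (\ref{eta1g2}), the factor $|\wp(q_{\pm}|\tau)+\eta_{1}(\tau)|^{2}$ is strictly positive. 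This already gives $C_{+,-}=\{\tau\in F_{0}\setminus\{e^{\pi i/3}\}\mid\operatorname{Im}\phi_{+}(\tau)=0\text{ or }\operatorname{Im}\phi_{-}(\tau)=0\}$.

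Next I would identify this set with $\mathcal{C}_{-}\cup\mathcal{C}_{0}\cup\mathcal{C}_{+}=\{\tau(C)\mid C\in\mathbb{R}\setminus\{0,1\}\}$. By (\ref{i-54}) together with $\eta_{2}=\tau\eta_{1}-2\pi i$, the condition $\operatorname{Im}\phi_{\pm}(\tau)=0$ means $\phi_{\pm}(\tau)=C$ for some $C\in\mathbb{R}$, which is exactly $f_{C}(\tau)=0$; since Theorem \ref{Unique-pole}-(2) forbids zeros of $f_{0}$ and $f_{1}$ in $F_{0}$ we get $C\neq0,1$, and then Theorem \ref{Unique-pole}-(1) forces $\tau=\tau(C)$. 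Conversely, every $\tau(C)$ with $C\in\mathbb{R}\setminus\{0,1\}$ satisfies $f_{C}(\tau(C))=0$, i.e.\ $\phi_{+}(\tau(C))=C$ or $\phi_{-}(\tau(C))=C$ is real, so $\tau(C)\in C_{+,-}$; note $\tau(C)\neq e^{\pi i/3}$ because $\phi_{\pm}(e^{\pi i/3})=\tfrac12-\tfrac{\sqrt3}{2}i\notin\mathbb{R}$ by (\ref{eta1rho}). Hence $C_{+,-}=\mathcal{C}_{-}\cup\mathcal{C}_{0}\cup\mathcal{C}_{+}$.

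Finally, for smoothness I would use that $\tau(C)$ is already a smooth function of $C$ and, by Lemma \ref{lemma-14}, injective on each of $(-\infty,0),(0,1),(1,+\infty)$; it then suffices to show the parametrization is regular, i.e.\ $\tau'(C)\neq0$. Differentiating $f_{C}(\tau(C))\equiv0$ in $C$ gives $\tau'(C)=-(\partial_{C}f_{C})/(\partial_{\tau}f_{C})$, and $\partial_{\tau}f_{C}(\tau(C))\neq0$ because $f_{C}$ has only simple zeros (\ref{simplezero}). Setting $A:=C\eta_{1}-\eta_{2}=(C-\tau)\eta_{1}+2\pi i$ and $B:=C-\tau\neq0$, one has $f_{C}=12A^{2}-g_{2}B^{2}$ and $\partial_{C}f_{C}=24\eta_{1}A-2g_{2}B$; if both vanished at $\tau(C)$, then substituting $g_{2}B=12\eta_{1}A$ into $12A^{2}=g_{2}B^{2}$ yields $A(A-\eta_{1}B)=0$, and since $A-\eta_{1}B=2\pi i\neq0$ this forces $A=0$, whence $g_{2}B^{2}=0$ and so $g_{2}(\tau(C))=0$, i.e.\ $\tau(C)=e^{\pi i/3}$ --- impossible. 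Thus $\tau'(C)\neq0$ and each of $\mathcal{C}_{-},\mathcal{C}_{0},\mathcal{C}_{+}$ is a smooth curve without self-intersection. I expect the genuine computation to sit in this last regularity step, but it collapses cleanly onto the already-established facts that $f_{C}$ has simple zeros and that $\tau(C)\neq e^{\pi i/3}$; the conceptual heart of the argument is the first step, where (\ref{eta1g2}) guarantees that degeneracy of $G_{2}$ is governed solely by $\operatorname{Im}\phi_{\pm}$.
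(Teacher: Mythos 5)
Your identification $C_{+,-}=\mathcal{C}_{-}\cup\mathcal{C}_{0}\cup\mathcal{C}_{+}$ runs essentially as in the paper: the same nonvanishing of the Hessian factors via (\ref{eta1g2}), the same chain $\operatorname{Im}\phi_{\pm}(\tau)=0\Leftrightarrow f_{C}(\tau)=0$ for some real $C$, then Theorem \ref{Unique-pole}; your only variation is excluding $e^{\pi i/3}$ by directly computing $\phi_{\pm}(e^{\pi i/3})=\tfrac12-\tfrac{\sqrt{3}}{2}i\notin\mathbb{R}$, where the paper instead cites Theorem \ref{lemma-15} (iii)--(iv). The smoothness step, however, is genuinely different, and it is correct. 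The paper fixes a branch of $\sqrt{g_{2}/12}$ on $F_{0}$ slit along $\{\tau=\tfrac12+ib\,|\,b\geq\tfrac{\sqrt{3}}{2}\}$ (resp.\ $b\leq\tfrac{\sqrt{3}}{2}$ for $\mathcal{C}_{0}$), shows as in Lemma \ref{lemma-14} that each curve lies in a single level set $\{\operatorname{Im}\phi_{\pm}=0\}$, and invokes $\phi_{\pm}'\neq0$ from \cite[Theorem 3.1]{CKL2} (i.e.\ (\ref{dephi})) together with the implicit function theorem, so the curves appear as regular level sets. You instead prove the parametrization $C\mapsto\tau(C)$ is regular: $\partial_{\tau}f_{C}(\tau(C))\neq0$ by (\ref{simplezero}), and your computation with $A=C\eta_{1}-\eta_{2}$, $B=C-\tau$ correctly shows that $f_{C}$ and $\partial_{C}f_{C}$ cannot vanish simultaneously — the relation $A-\eta_{1}B=2\pi i$ forces $A=0$, and since $B\neq0$ (as $C\in\mathbb{R}$ while $\tau(C)\in\mathbb{H}$) this gives $g_{2}(\tau(C))=0$, i.e.\ $\tau(C)=e^{\pi i/3}$, already excluded — whence $\tau'(C)=-\partial_{C}f_{C}/\partial_{\tau}f_{C}\neq0$. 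Your route buys a more self-contained argument: no branch-fixing and no appeal to \cite[Theorem 3.1]{CKL2}, using only (\ref{simplezero}), which the paper needs anyway to make $\tau(C)$ smooth. What the paper's route buys is local embeddedness for free, since a regular level set is locally a $1$-manifold, whereas an injective regular parametrization is a priori only an immersed simple curve; to match this you should add the one-line remark that both ends of each curve leave every compact subset of $\mathbb{H}$ (Lemma \ref{lemma-11} and Theorem \ref{lemma-15}-(i)), so the parametrization is proper and the image is embedded. This is a cosmetic addition, not a gap at the paper's own standard of rigor.
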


\begin{proof}
Remark that  $\wp(q_{\pm}|\tau)+\eta_{1}(\tau)=\eta_{1}(\tau)\pm\sqrt{g_2(\tau)/12}=0$ implies $12\eta_{1}(\tau)^{2}-g_{2}(\tau)=0$. So it follows from  (\ref{eta1g2}) that $\eta_{1}(\tau)\pm\sqrt{g_2(\tau)/12}\neq 0$ for any $\tau\in F_{0}$. Consequently, we deduce from (\ref{SZ-2}) that
{\allowdisplaybreaks
\begin{align}\label{c+-}
C_{+,-}&=\left \{  \tau\in F_{0}\setminus\{e^{\pi i/3}\}\left \vert
\operatorname{Im}\phi_{+}(\tau)=0
\;\text{or }\operatorname{Im}\phi_{-}(\tau)=0
\right.  \right \}\\
&=\left \{  \tau\in F_{0}\setminus\{e^{\pi i/3}\}\left \vert
\begin{array}
[c]{r}%
\phi_{+}(\tau)=C\text{ or }\phi_{-}(\tau)=C\nonumber\\
\text{ for some }C\in\mathbb{R}
\end{array}
\right.  \right \}\nonumber\\
&=\left \{  \tau\in F_{0}\setminus\{e^{\pi i/3}\}\left \vert f_{C}(\tau)=0 \text{ for some }C\in\mathbb{R}\right.  \right \}\nonumber\\
&=\left \{  \tau(C)| C\in\mathbb{R}\setminus\{0,1\} \right \}
=\mathcal{C}_{-}\cup\mathcal{C}_{0}\cup\mathcal{C}_{+},\nonumber
\end{align}
}%
where we have used Theorem \ref{Unique-pole} and Theorem \ref{lemma-15} (iii)-(iv), which show that $e^{\pi i/3}\notin \mathcal{C}_{-}\cup\mathcal{C}_{0}\cup\mathcal{C}_{+}$.

Now we prove that the three curves are all smooth curves in $F_{0}$. Recalling (\ref{i8}) in Lemma \ref{lemma-14}, we restrict $\tau \in F_{0}\backslash \{ \tau=\tfrac{1}{2}+ib|
b\geq \tfrac{\sqrt{3}}{2}\}$ and fix a branch of $\sqrt{g_{2}(\tau)/12}$. Then
both $\phi_{+}(\tau)$ and $\phi_{-}(\tau)$ are \emph{single-valued} in $F_{0}\backslash \{ \tau=\tfrac{1}{2}+ib|
b\geq \tfrac{\sqrt{3}}{2}\}$ and it follows from \cite[Theorem 3.1]{CKL2} that \begin{equation}\label{dephi}\phi_{+}'(\tau)\neq 0,\quad\phi_{-}'(\tau)\neq 0,\quad\forall \tau\in F_{0}\backslash \{ \tau=\tfrac{1}{2}+ib|
b\geq \tfrac{\sqrt{3}}{2}\}.\end{equation} By (\ref{c+-}), the same argument as Lemma \ref{lemma-14} implies that either
\begin{equation}\label{phi-}\mathcal{C}_{-} \subset \{\tau\in F_{0}\backslash \{ \tau=\tfrac{1}{2}+ib|
b\geq \tfrac{\sqrt{3}}{2}\}|\operatorname{Im}\phi_{+}(\tau)=0\}\end{equation} or
\[\mathcal{C}_{-} \subset \{\tau\in F_{0}\backslash \{ \tau=\tfrac{1}{2}+ib|
b\geq \tfrac{\sqrt{3}}{2}\}|\operatorname{Im}\phi_{-}(\tau)=0\}.\]
Say (\ref{phi-}) holds for example. Write $\tau=a+bi$ with $a,b\in\mathbb{R}$. By (\ref{phi-}), (\ref{dephi}) and \[
\frac{\partial \operatorname{Im}\phi_{+}}{\partial a}=\operatorname{Im}\phi
_{+}^{\prime},\text{ \  \ }\frac{\partial \operatorname{Im}\phi_{+}}{\partial
b}=\operatorname{Re}\phi_{+}^{\prime},
\]
we see that $\mathcal{C}_{-}$ is smooth at any $\tau\in \mathcal{C}_{-}$. Therefore,
in both cases, we can apply (\ref{dephi}) to conclude that $\mathcal{C}_{-}$ is a smooth curve. Then Theorem \ref{lemma-15}-(ii) shows that $\mathcal{C}_{+}$ is also a smooth curve. For $\mathcal{C}_{0}$, we note from Theorem \ref{lemma-15}-(iii) that $\mathcal{C}_{0}\subset F_{0}\backslash \{ \tau=\tfrac{1}{2}+ib|
b\leq \tfrac{\sqrt{3}}{2}\}$. Restrict $\tau \in F_{0}\backslash \{ \tau=\tfrac{1}{2}+ib|
b\leq \tfrac{\sqrt{3}}{2}\}$ and fix a branch of $\sqrt{g_{2}(\tau)/12}$. Again
both $\phi_{+}(\tau)$ and $\phi_{-}(\tau)$ are \emph{single-valued} in $F_{0}\backslash \{ \tau=\tfrac{1}{2}+ib|
b\leq \tfrac{\sqrt{3}}{2}\}$, so the same argument shows that $\mathcal{C}_{0}$ is a smooth curve.
\end{proof}

The numerical simulation for the degeneracy curves of $G_2$ and hence the three smooth curves is shown in Figure 3, which is copied from C. L. Wang \cite{LW2}. The other six curves appearing in Figure 3 are those degeneracy curves of $G_2$ at other trivial critical points $\{(\frac{1}{2}\omega_i, \frac{1}{2}\omega_j)|i\neq j\}$. In another paper, we will prove that under the $\Gamma_{0}(2)$ action, all critical points of $e_k(\tau)=\wp(\frac{\omega_k}{2}|\tau)$ will be mapped to locating on these six smooth curves. Figure 3 indicates that critical points of $E_{2}(\tau)$ and $e_k(\tau)$'s could be approximately computed via mathematical softwares such as Mathematica.

\begin{figure}[btp] \label{deg-curve-2}
\includegraphics[width=2.4in]{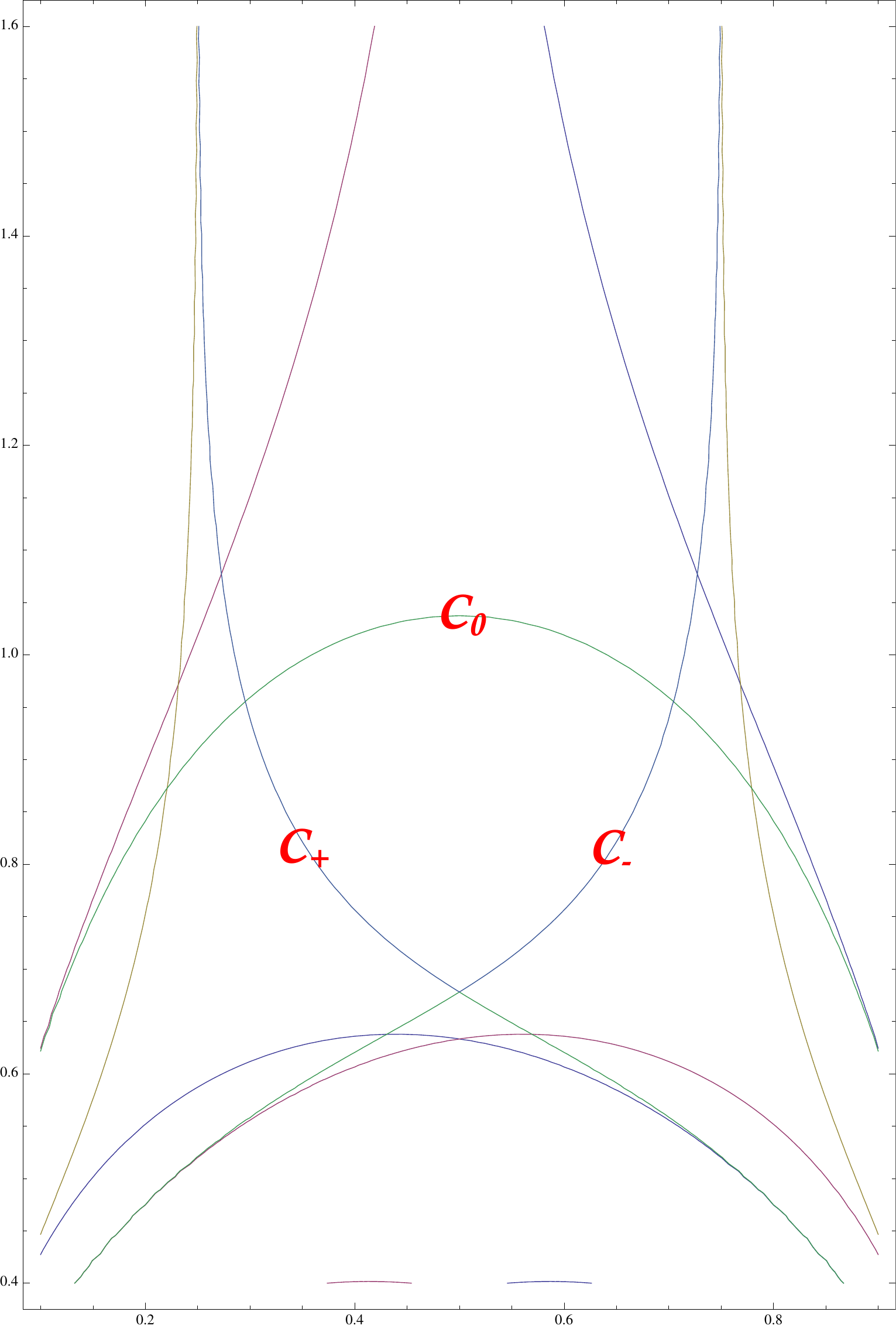}
\caption{The smooth curves.}
\end{figure}

\appendix

\section{Application}

In this appendix,
we apply Theorem \ref{thm2} back to the mean field equation (\ref{mfe}).
Define%
\[
L:=\{(r,s)\in \triangle_{1}\text{ }|\text{ }2r+s=2\}.
\]
By Theorem \ref{thm2}, for any $(r,s)\in L$, $Z_{r,s}^{(2)}(\tau)$ has a
unique zero, denoted by $\tau_{s}$, in $F_{0}$.

\begin{theorem}
\label{thm-12}For any $(r,s)\in L$, the unique zero $\tau_{s}$ of $Z_{r,s}^{(2)}(\tau)$ in $F_0$ satisfies
 $\tau_{s}=\frac{1}{2}+ib_{s}$ for some $b_{s}>\frac{1}{2}$. Furthermore,
$\lim_{s\rightarrow \frac{1}{2}}b_{s}=+\infty$ and%
\begin{equation}
b^{\ast}:=\lim_{s\rightarrow0}b_{s}\text{ exists and }\; b^{\ast}\in(\sqrt
{3}/2,6/5).\label{sss}%
\end{equation}
In particular, for any $\tau=\frac{1}{2}+ib$ with $b\in(b^{\ast},+\infty)$,
there exists $(r,s)\in L$ such that this $\tau$ is the unique zero of
$Z_{r,s}^{(2)}$ in $F_{0}$.
\end{theorem}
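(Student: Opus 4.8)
The proof naturally decomposes along the four assertions, and the unifying device is a reflection symmetry of $Z_{r,s}^{(2)}$ across the line $\operatorname{Re}\tau=\tfrac12$. \textbf{Locating $\tau_s$.} The plan is first to record the conjugation identities $\overline{\zeta(z|\tau)}=\zeta(\bar z|-\bar\tau)$, $\overline{\wp(z|\tau)}=\wp(\bar z|-\bar\tau)$, $\overline{\wp'(z|\tau)}=\wp'(\bar z|-\bar\tau)$, which follow directly from the defining series together with $\overline{\Lambda_\tau}=\Lambda_{-\bar\tau}$, and their consequences $\overline{\eta_1(\tau)}=\eta_1(-\bar\tau)$, $\overline{\eta_2(\tau)}=-\eta_2(-\bar\tau)$, $g_2(1-\bar\tau)=\overline{g_2(\tau)}$ (the last from the $q$-expansion (\ref{ex-2}), since $e^{2\pi i(1-\bar\tau)}=\bar q$). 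Feeding these into (\ref{c-7}) gives $\overline{Z_{r,s}(\tau)}=Z_{r,-s}(-\bar\tau)$, hence from (\ref{new3}) also $\overline{Z_{r,s}^{(2)}(\tau)}=Z_{r,-s}^{(2)}(-\bar\tau)$. Combining with the translation rule $Z_{r+s,-s}^{(2)}(\mu+1)=Z_{r,-s}^{(2)}(\mu)$ (property (ii) for $\gamma=\left(\begin{smallmatrix}1&1\\0&1\end{smallmatrix}\right)$) at $\mu=-\bar\tau$ yields the key identity
\begin{equation*}
\overline{Z_{r,s}^{(2)}(\tau)}=Z_{r+s,-s}^{(2)}(1-\bar\tau).
\end{equation*}
Since $(r,s)\in L$ means $2r+s=2$, we have $(r+s,-s)=(-r,-s)+(2,0)$, so property (i) gives $Z_{r+s,-s}^{(2)}=Z_{-r,-s}^{(2)}=-Z_{r,s}^{(2)}$, i.e. $\overline{Z_{r,s}^{(2)}(\tau)}=-Z_{r,s}^{(2)}(1-\bar\tau)$. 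Evaluating at $\tau=\tau_s$ shows $1-\overline{\tau_s}$ is also a zero of $Z_{r,s}^{(2)}$ in $F_0$ (which is invariant under $\tau\mapsto1-\bar\tau$), so the uniqueness in Theorem \ref{thm2} forces $\tau_s=1-\overline{\tau_s}$, i.e. $\tau_s=\tfrac12+ib_s$; and since $\tau_s\in\mathring F_0$ we get $b_s=|\tau_s-\tfrac12|>\tfrac12$.

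\textbf{Identifying $b^\ast$.} On $L$, property (i) gives $Z_{r,s}^{(2)}=Z_{1-s/2,\,s}^{(2)}=Z_{-s/2,\,s}^{(2)}=Z_{-\frac12 s,\,s}^{(2)}$, which is \emph{exactly} the family $Z_{-Cs,s}^{(2)}$ of Section \ref{tauc} with $C=\tfrac12$. Hence $\tau_s$ coincides with the zero $\tau(s)$ studied there, and by the construction in Theorem \ref{Unique-pole}(1) (via Lemmas \ref{lemma-9}--\ref{lemma-8}) we have $\tau_s\to\tau(\tfrac12)$ as $s\to0$. Therefore $b^\ast=\operatorname{Im}\tau(\tfrac12)$ exists, and Theorem \ref{lemma-15}(iii) gives $b^\ast\in(\tfrac{\sqrt3}{2},\tfrac65)$.

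\textbf{Blow-up and surjectivity.} For $b_s\to+\infty$ as $s\to\tfrac12$, I would argue by compactness: if $b_{s_k}$ stayed bounded along some $s_k\to\tfrac12$, then $\tau_{s_k}=\tfrac12+ib_{s_k}$ would subconverge to $\tfrac12+ib_\infty$ with $b_\infty\in[\tfrac12,\infty)$, while $(r,s)\to(\tfrac34,\tfrac12)$; passing to the limit in $Z_{r,s}^{(2)}(\tau_{s_k})=0$ gives $Z_{3/4,1/2}^{(2)}(\tfrac12+ib_\infty)=0$. But $(\tfrac34,\tfrac12)\notin\tfrac12\mathbb{Z}^2$ lies on $\partial\triangle_1$, so $Z_{3/4,1/2}^{(2)}$ has no zero in $F_0$ by Theorem \ref{thm2}, while the boundary case $b_\infty=\tfrac12$ is excluded by Lemma \ref{neq-zero}; either way a contradiction. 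For the last claim, $s\mapsto\tau_s$ is continuous on $(0,\tfrac12)$ by the argument-principle reasoning of Lemma \ref{yl-1} (the unique zero sits in $\mathring F_0$ and $Z_{r,s}^{(2)}$ does not vanish on $\partial F_0\cap\mathbb{H}$ nor approach $0$ at the cusps for $(r,s)\in\triangle_1$), so $b_s$ is continuous with $b_s\to b^\ast$ as $s\to0$ and $b_s\to+\infty$ as $s\to\tfrac12$; the intermediate value theorem then supplies, for each $b\in(b^\ast,+\infty)$, some $(r,s)\in L$ with $\tau_s=\tfrac12+ib$, which is the unique zero of $Z_{r,s}^{(2)}$ in $F_0$ by Theorem \ref{thm2}.

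\textbf{Main obstacle.} The conceptual heart, and the step demanding the most care, is the reflection symmetry of the first paragraph: getting the conjugation sign for $\eta_2$ correct and then reducing $(r+s,-s)$ modulo $\mathbb{Z}^2$ to $-(r,s)$ \emph{using precisely} the relation $2r+s=2$ that defines $L$. Among the remaining parts the blow-up $b_s\to+\infty$ is the most delicate, since it relies essentially on the boundary and cusp non-vanishing furnished by Lemma \ref{neq-zero} and Theorem \ref{thm2} to forbid any finite limit point.
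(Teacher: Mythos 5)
Your proposal is correct and follows essentially the same route as the paper: the key reflection identity $\overline{Z_{r,s}^{(2)}(\tau)}=-Z_{r,s}^{(2)}(1-\bar{\tau})$ on $L$ (you reach it via the lattice $-\bar{\tau}$ plus a translation from property (ii), while the paper computes directly with $1-\bar{\tau}$ --- a cosmetic difference), the contradiction at $s\rightarrow\frac{1}{2}$ via $(\frac{3}{4},\frac{1}{2})\in\partial\triangle_{1}$ and Theorem \ref{thm2}, and the identification $Z_{r,s}^{(2)}=Z_{-\frac{1}{2}s,s}^{(2)}$ so that $\tau_{s}\rightarrow\tau(\frac{1}{2})$ by the proof of Theorem \ref{Unique-pole}-(1) combined with Theorem \ref{lemma-15}-(iii). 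Your explicit continuity-plus-intermediate-value argument for the final assertion merely spells out what the paper leaves implicit.
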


\begin{proof}
Given any $(r,s)\in L$. By the definition of $\zeta(z|\tau)$ and $\wp(z|\tau
)$, it is easy to prove%
\[
\overline{\zeta(z|\tau)}=\zeta(\bar{z}|1-\bar{\tau}),\text{ }\overline
{\wp(z|\tau)}=\wp(\bar{z}|1-\bar{\tau}),
\]%
\[
\overline{\wp^{\prime}(z|\tau)}=\wp^{\prime}(\bar{z}|1-\bar{\tau}).
\]
Thus%
\[
\overline{\eta_{1}(\tau)}=2\overline{\zeta(1/2|\tau)}=2\zeta(1/2|1-\bar{\tau
})=\eta_{1}(1-\bar{\tau}),
\]%
{\allowdisplaybreaks
\begin{align*}
\overline{\eta_{2}(\tau)}  &  =2\overline{\zeta(\tau/2|\tau)}=2\zeta(\bar
{\tau}/2|1-\bar{\tau})\\
&  =2\zeta(1/2|1-\bar{\tau})-2\zeta((1-\bar{\tau})/2|1-\bar{\tau})\\
&  =\eta_{1}(1-\bar{\tau})-\eta_{2}(1-\bar{\tau}),
\end{align*}
}%
i.e.
{\allowdisplaybreaks
\begin{align*}
\overline{Z_{r,s}(\tau)}  &  =\overline{\zeta(r+s\tau|\tau)}-r\overline
{\eta_{1}(\tau)}-s\overline{\eta_{2}(\tau)}\\
&  =\zeta(r+s-s(1-\bar{\tau})|1-\bar{\tau})-(r+s)\eta_{1}(1-\bar{\tau}%
)+s\eta_{2}(1-\bar{\tau})\\
&  =Z_{r+s,-s}(1-\bar{\tau}).
\end{align*}
}%
From here and%
\[
\overline{\wp(r+s\tau|\tau)}=\wp(r+s-s(1-\bar{\tau})|1-\bar{\tau}),
\]%
\[
\overline{\wp^{\prime}(r+s\tau|\tau)}=\wp^{\prime}(r+s-s(1-\bar{\tau}%
)|1-\bar{\tau}),
\]
we obtain%
\begin{align*}
\overline{Z_{r,s}^{(2)}(\tau)}  &  =Z_{r+s,-s}^{(2)}(1-\bar{\tau
})=-Z_{-(r+s),s}^{(2)}(1-\bar{\tau})\\
&  =-Z_{2-r-s,s}^{(2)}(1-\bar{\tau})=-Z_{r,s}^{(2)}(1-\bar{\tau}).
\end{align*}
Then $1-\overline{\tau_{s}}$ is also a zero of $Z_{r,s}^{(2)}(\tau)$ in
$F_{0}$, so $1-\overline{\tau_{s}}=\tau_{s}$, i.e. $\tau_{s}=\frac{1}%
{2}+ib_{s}$ for some $b_{s}\in(\frac{1}{2},+\infty)$. Suppose by contradiction
that, up to a sequence,%
\[
\lim_{s\rightarrow 1/2}b_{s}=b\in \lbrack1/2,+\infty).
\]
Then $\frac{1}{2}+ib$ is a zero of $Z_{\frac{3}{4},\frac{1}{2}}^{(2)}(\tau)$
in $F_{0}$, which is a contradiction with Theorem \ref{thm2} because
$(\frac{3}{4},\frac{1}{2})\in \partial \triangle_{1}$. This proves
$\lim_{s\rightarrow \frac{1}{2}}b_{s}=+\infty$. Note $Z_{r,s}^{(2)}(\tau)=Z_{1-\frac{1}{2}s,s}^{(2)}(\tau)=Z_{-\frac{1}{2}s,s}^{(2)}(\tau)$. To prove (\ref{sss}), we recall that
the proof of Theorem \ref{Unique-pole}-(1) shows that $\lim_{s\to 0}\tau_{s}$ exists and
\[\lim_{s\to 0}\tau_{s}=\tau(\tfrac{1}{2}),\]
where $\tau(\frac{1}{2})$ is the unique zero of $f_{\frac{1}{2}}(\tau)$ in $F_{0}$. Together with Theorem \ref{lemma-15}-(iii), we obtain
$\lim_{s\to 0}b_{s}=\operatorname{Im}\tau(\tfrac{1}{2})\in(\frac{\sqrt{3}}{2}%
,\frac{6}{5})$. This proves (\ref{sss}) and hence completes the proof.
\end{proof}

The following result, which was announced in \cite{CKL3}, give new existence results for the mean field equation (\ref{mfe}) when $E_\tau$ is a rhombus torus.

\begin{theorem}
\label{thm3}Let $\tau=\frac{1}{2}+ib$ with $b>b^{\ast}$, where $b^{\ast}\in(\frac{\sqrt{3}}{2},\frac{6}%
{5})$ is in Theorem \ref{thm-12}. Then equation (\ref{mfe}) on $E_\tau$ has a solution.
\end{theorem}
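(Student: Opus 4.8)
The plan is to obtain Theorem \ref{thm3} as an immediate consequence of the PDE--pre-modular form dictionary (Theorem B-(1)) combined with the surjectivity statement proved in Theorem \ref{thm-12}. The point is that all the analytic work has already been carried out upstream, so the proof here is a short deduction.

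First I would recall Theorem B-(1): the mean field equation (\ref{mfe}) on $E_\tau$ admits a solution if and only if there exists a pair $(r,s)\in\mathbb{R}^2\setminus\frac{1}{2}\mathbb{Z}^2$ for which $\tau$ is a zero of the pre-modular form $Z_{r,s}^{(2)}(\cdot)$. Hence it suffices to produce, for each $\tau=\frac{1}{2}+ib$ with $b>b^{\ast}$, one admissible pair $(r,s)$ satisfying $Z_{r,s}^{(2)}(\tau)=0$.

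Next I would invoke the final assertion of Theorem \ref{thm-12}, which supplies exactly such a pair: for every $\tau=\frac{1}{2}+ib$ with $b\in(b^{\ast},+\infty)$ there is some $(r,s)\in L$ such that $\tau$ is the unique zero of $Z_{r,s}^{(2)}$ in $F_0$. It remains only to check admissibility. Since $L\subset\triangle_1$ and $\triangle_1\subset\{(r,s)\mid \frac{1}{2}<r<1,\ 0<s<\frac{1}{2}\}$, we have in particular $s\in(0,\frac{1}{2})$, so $s$ is not a half-integer and therefore $(r,s)\notin\frac{1}{2}\mathbb{Z}^2$. Thus the pair is admissible for Theorem B-(1), and combining the two facts yields a solution of (\ref{mfe}). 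Note that the open range $b\in(b^{\ast},+\infty)$ in Theorem \ref{thm-12} matches the hypothesis $b>b^{\ast}$ precisely, so no boundary case needs separate treatment.

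Since the argument reduces to citing two earlier results, I do not expect a genuine obstacle at this stage; the real difficulty lies entirely in Theorem \ref{thm-12}. There one must show that the zero $\tau_s$ of $Z_{r,s}^{(2)}$ along the segment $L$ stays on the symmetry axis $\operatorname{Re}\tau=\frac{1}{2}$, that $b_s=\operatorname{Im}\tau_s\to+\infty$ as $s\to\frac{1}{2}$, and that $b_s$ descends to $b^{\ast}=\operatorname{Im}\tau(\frac{1}{2})\in(\frac{\sqrt{3}}{2},\frac{6}{5})$ as $s\to0$; the surjectivity of $s\mapsto b_s$ onto $(b^{\ast},+\infty)$, which is what lets us sweep out the entire half-line, follows from the continuity of $\tau_s$ in $s$ and the intermediate value theorem, together with the sharp location of $b^{\ast}$ from Theorem \ref{lemma-15}-(iii). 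The only thing I would double-check carefully is this covering claim, to be sure every $b>b^{\ast}$ is actually attained by some $(r,s)\in L$.
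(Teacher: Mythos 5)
Your proposal is correct and matches the paper's own proof, which is exactly the one-line deduction from Theorem \ref{thm-12} combined with Theorem B-(1); the covering claim you flag (every $b>b^{\ast}$ being attained) is already part of the statement of Theorem \ref{thm-12}, so citing it suffices. Your extra admissibility check that $L\subset\triangle_{1}$ forces $(r,s)\notin\frac{1}{2}\mathbb{Z}^{2}$ is a harmless (and correct) elaboration the paper leaves implicit.
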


\begin{proof}
This theorem is an immediate consequence of Theorem
\ref{thm-12} and Theorem B-(1).\end{proof}

Remark that Theorem \ref{thm3} is almost optimal in the sense of Theorem B-(2), which says that if
$\tau=\frac{1}{2}+\frac{\sqrt{3}}{2}i$, then equation (\ref{mfe})
on $E_{\tau}$ has \emph{no} solutions.

\medskip
\noindent{\bf Acknowledgements} The authors thank Prof. Chin-Lung Wang very much for providing the file of Figure 3 to us.

\end{document}